\newcommand{\assref}{\hyperref[assumption]{\(\mathcal{A}\)}}
\g@addto@macro\normalsize{%
	\setlength\abovedisplayskip{4pt}
	\setlength\belowdisplayskip{4pt}
	\setlength\abovedisplayshortskip{4pt}
	\setlength\belowdisplayshortskip{4pt}
}
\numberwithin{equation}{section}
\crefname{section}{Section}{Sections}
\crefname{subsection}{Subsection}{Subsections}
\crefname{condition}{Condition}{Conditions}
\crefname{hypothesis}{Hypothesis}{Conditions}
\crefname{assumption}{Assumption}{Assumptions}
\crefname{lemma}{Lemma}{Lemmas}
\crefname{definition}{Definition}{Definitions}
\newtheorem{theorem} {Theorem}[section]
\theoremstyle{plain}
\newtheorem{remark}[theorem]{Remark}
\newtheorem*{assumption}{Assumption ($\mathcal{A}$)}
\newtheorem{lemma}[theorem]{Lemma}
\theoremstyle{remark}
\newtheorem{example}{Example}
\newtheorem{counter example}[theorem]{Counter Example}
\def\CC{{\rm \kern.24em \vrule width.02em height1.4ex depth-.05ex \kern-.26emC}}
\def\TagOnRight
\def\AA{{it I} \hskip-3pt{\tt A}}
\def\QQ{\rlap {\raise 0.4ex \hbox{$\scriptscriptstyle |$}} {\hskip -0.1em Q}}
\newcommand{\vo}{\vec{o}\@ifnextchar{^}{\,}{}}
\def\YYint#1#2#3{{\setbox0=\hbox{$#1{#2#3}{\iint}$}
		\vcenter{\hbox{$#2#3$}}\kern-.50\wd0}}
\def\XXint#1#2#3{{\setbox0=\hbox{$#1{#2#3}{\int}$}
		\vcenter{\hbox{$#2#3$}}\kern-.50\wd0}}
\def\namedlabel#1#2{\begingroup
	\def\@currentlabel{#2}%
	\label{#1}\endgroup
}
\newcommand{\rmh}[1]{\mathpalette{\raisem@th{#1}}}
\newcommand{\raisem@th}[3]{\hspace*{-1pt}\raisebox{#1}{$#2#3$}}
\newcounter{desccount}
\newcommand{\descref}[2]{\hyperref[#1]{\textnormal{\textcolor{black}{}\textcolor{blue}{ #2}\textcolor{black}{}}}}
\newcommand{\dref}[2]{\hyperref[#1]{\textcolor{black}{(}\textcolor{blue}{\bf #2}\textcolor{black}{)}}}
\newcommand{\be} {\begin{eqnarray}}
	\newcommand{\ee} {\end{eqnarray}}
\newcommand{\Bea} {\begin{eqnarray*}}
	\newcommand{\Eea} {\end{eqnarray*}}
\newcounter{whitney}
\newcounter{ineqcounter}
\def\ps@pprintTitle{%
	\let\@oddhead\@empty
	\let\@evenhead\@empty
	\def\@oddfoot{}%
	\let\@evenfoot\@oddfoot}
\newcommand{\refcheckize}[1]{%
	\expandafter\let\csname @@\string#1\endcsname#1%
	\expandafter\DeclareRobustCommand\csname relax\string#1\endcsname[1]{%
		\csname @@\string#1\endcsname{##1}\wrtusdrf{##1}}%
	\expandafter\let\expandafter#1\csname relax\string#1\endcsname
}
\newcommand{\mainsectionstyle}{%
	\renewcommand{\@secnumfont}{\bfseries}
	\renewcommand\section{\@startsection{section}{2}%
		\z@{.5\linespacing\@plus.7\linespacing}{-.5em}%
		{\normalfont\bfseries}}%
}
\xpatchcmd{\MaketitleBox}{\hrule}{}{}{}
\xpatchcmd{\MaketitleBox}{\hrule}{}{}{}
\date{}
\title%
{Existence and stability of the Riemann solutions for a non-symmetric Keyfitz--Kranzer type model}
\author[1]{Rahul Barthwal\thanks{\href{mailto:rahul.barthwal@mathematik.uni-stuttgart.de}{Corresponding author: rahul.barthwal@mathematik.uni-stuttgart.de}}}
\author[1]{Christian Rohde\thanks{\href{mailto:christian.rohde@mathematik.uni-stuttgart.de}{christian.rohde@mathematik.uni-stuttgart.de}}}
\author[2]{Anupam Sen\thanks{\href{mailto:anupam.sen@vitbhopal.ac.in}{anupamsen@vitbhopal.ac.in}}}
\affil[1]{\footnotesize Institute of Applied Analysis and Numerical Simulation, University of Stuttgart\\
Pffafenwaldring 57, Stuttgart, 70569, Germany}
\affil[2]{\footnotesize Mathematics Division, School of Advanced Sciences and Languages, VIT Bhopal University, Sehore,
Madhya Pradesh, 466114, India}
\begin{document}
\maketitle
\begin{abstract}
In this article, we develop a new hyperbolic model governing the first-order dynamics of a thin film flow under the influence of gravity and solute transport. The obtained system turns out to be a non-symmetric Keyfitz-Kranzer type system. We find an entire class of convex entropies in the regions where the system remains strictly hyperbolic. By including delta shocks, we prove the existence of unique solutions of the Riemann problem. We analyze their stability with respect to the perturbation of the initial data and to the gravity and surface tension parameters. Moreover, we discuss the large time behaviour of the solutions of the perturbed Riemann problem and prove that the initial Riemann states govern it. Thus, we confirm the structural stability of the Riemann solutions under the perturbation of initial data. Finally, we validate our analytical results with well-established numerical schemes for this new system of conservation laws.
\end{abstract}
{\textbf{Key words.} Non-symmetric Keyfitz-Kranzer systems, thin film flows, structural stability, measure valued solutions, Riemann problem, nonlinear wave interactions. }
\medskip \\
{\textbf{MSC codes.}  35L40 35L45 35L65 76A20 
\section{Introduction}
Keyfitz-Kranzer type systems \cite{keyfitz1980system} are an important class of hyperbolic systems of conservation laws given by
\begin{align}\label{eq: KK}
u_{i_{t}}+\big(u_{i}(\phi(u_1, u_2, \ldots, u_n))\big)_{x}=0, ~i=1, 2, \ldots, n, ~n\in \mathbb{N}.
\end{align}
In recent years, the system \eqref{eq: KK} has been analyzed extensively for different choices of $\phi$. Some of the particular choices of $\phi$ include $\phi(u)=\phi(|u|)$ \cite{ freistuhler1991rotational, freistuhler1994cauchy} for $(n\times n)$ systems while $\phi(u_1, u_2)=\phi(\alpha u_1+\beta u_2)$, $\phi(u_1, u_2)=\phi(u_1/u_2)$ \cite{yang2012new, yang2014delta} and $\phi(u_1, u_2)=\phi(u_1u_2)$ \cite{shen2018delta} for two-component systems. Actually, the different choices of $\phi$ describe quite distinct physical phenomena. For instance, the choice of $\phi(u)=\phi(|u|)$ describes a simplified magnetohydrodynamics model \cite{MR2205154, freistuhler1991rotational} or the elastic string model \cite{keyfitz1980system}, while $\phi(u)=k_i u_i/(1+\sum u_i)$ describes multi-component chromatography processes \cite{james1995kinetic}. 

For the particular $(2\times 2)$ case with $\phi(u)=f(u_1)$, Karlsen et al. \cite{karlsen2008semi} developed a semi-Godunov scheme based on Riemann solutions for general triangular systems, including the system of the form
\begin{equation}\label{pressureless}
\begin{aligned}
    u_{1,t}+(u_1f(u_1))_{x}&=0,\\
    u_{2,t}+(u_2f(u_1))_{x}&=0.
  \end{aligned}  
\end{equation}
The system \eqref{pressureless} represents the simplest $(2\times 2)$ triangular system of conservation laws.

Another important Keyfitz-Kranzer type system was recently introduced in Conn et al. \cite{conn2017simple}. It takes the form 
\begin{equation}\label{thin-film}
\begin{aligned}
    h_t+(h^2b/2)_x&=0,\\
    b_t+(hb^2/2)_x&=0.
    \end{aligned}
\end{equation}
Clearly, the system \eqref{thin-film} is a Keyfitz-Kranzer type system with $\phi(h, b)=hb/2$. The system \eqref{thin-film} governs the first-order dynamics of a surface tension-driven thin film flow with an anti-surfactant solute in the absence of gravity. The system \eqref{thin-film} has been analyzed by Sen\&Raja Sekhar and Minhajul et al. for classical and nonclassical wave interactions in 1-D \cite{sekhar2019stability, sen2020delta} while Barthwal et al. and Pandey et al. \cite{barthwal2022two, barthwal2023construction, pandey2025construction} considered multi-D cases.

A central question in the theory of Cauchy problems for \eqref{eq: KK} concerns the stability of Riemann solutions under small perturbations of the initial data or flux parameters. Perturbed Riemann problems involving nonlinear wave interactions have been studied extensively to establish the stability of these solutions (see, e.g., \cite{guo2014perturbed, li2023perturbed, shen2010stability, zhang2016interactions, zhang2025stability} and references therein). We also refer the interested readers to a recent work \cite{tan2025generic} for structural stability of Riemann solutions with flux perturbations.

Recent progress in this area leads us to study the structure of the system \eqref{thin-film}, when we include the gravity effects. Is the new system well-posed and has unique and stable Riemann solutions, especially in the domains where the system becomes weakly hyperbolic? Furthermore, we analyze the effects of the gravity and surface tension parameters and perturbation of initial data on the stability of these solutions.

In this article, we develop a mathematical model which governs the first-order dynamics of a thin film flow under the influence of gravity and anti-surfactant solute. It turns out to be a non-symmetric Keyfitz-Kranzer type system \eqref{eq: KK} with $\phi(u_1, u_2)=\alpha u_1u_2+\kappa u_1^2/3$ for $\alpha, \kappa\geq 0$. The model obtained in this article extends the mathematical model considered in \cite{barthwal2023construction, conn2016fluid} by considering gravity effects. Precisely, we consider the following system
\begin{equation}\label{eq: main_system}
\begin{aligned}
    h_t+(\alpha h^2b+\kappa h^3/3)_x&=0,\\
    b_t+(\alpha hb^2+\kappa h^2b/3)_x&=0,
\end{aligned}    
\end{equation}
where $h(x, t)$ is the dimensionless film thickness and $b(x, t)$ is the concentration gradient and $\alpha$ and $\kappa$ are two nonnegative parameters defining the surface tension and gravity effects. Further, the state space is defined as
\begin{align}\label{state_space}
    \mathcal{U}=\{(h, b)^\top\in \mathbb{R}^2|~~ h, b\geq 0\}.
\end{align}
\begin{remark}
In this article, we focus on the state space \eqref{state_space}, where both variables are non-negative. This, in particular, includes possible rupture of the thin film. The case, when $h> 0$ and $b< 0$ can also be analyzed in a similar manner, where only the ordering of eigenvalues is different.
\end{remark}

In the second part of the article, we consider the local well-posedness of the Cauchy problem with initial data $(h_0(x), b_0(x))^\top\in \mathcal{U}^{\circ}$, where $\mathcal{U}^{\circ}$ denotes the interior of $\mathcal{U}$. Using explicit Riemann invariants, we first develop a class of strictly convex entropies for the system \eqref{eq: main_system} in the regions where the system remains strictly hyperbolic. This then proves the local well-posedness of the smooth solutions. Moreover, we consider a Riemann problem for the system \eqref{eq: main_system} with the following initial data
\begin{align}\label{Riemann_data}
    (h_0, b_0)^\top(x)=\begin{cases}
        (h_-, b_-)^\top, \quad {\rm{if}}~ x<0,\\
        (h_+, b_+)^\top, \quad {\rm{if}}~x>0,
    \end{cases}
\end{align}
where the Riemann data in \eqref{Riemann_data} satisfies
\begin{assumption}\label{assumption}
Only one of the four states $h_-, h_+, b_-$ or $b_+$ can be zero.
\end{assumption}
Under the assumption (\assref), we explicitly compute the intermediate states of the Riemann problem for various choice of initial data and prove that the Riemann problem \eqref{eq: main_system}-\eqref{Riemann_data} has a unique solution in the state space. For $\alpha=1/2$ and $\kappa=0$, the system reduces to the thin film flow system \eqref{thin-film} without gravity. We analyze the behaviour of Riemann solutions for \eqref{eq: main_system} as $\kappa\rightarrow 0$, which can be considered as a vanishing gravity limit for the system \eqref{thin-film}. We explicitly prove that the Riemann solutions for \eqref{eq: main_system}-\eqref{Riemann_data} converge to the Riemann solutions of \eqref{thin-film}-\eqref{Riemann_data} as $\kappa\rightarrow 0$. Similarly, for the case $\alpha=0$ and $\kappa=1$, the system reduces to the triangular system \eqref{pressureless} with $f(h)=h^2/3$. We analyze the vanishing surface tension limit ($\alpha\rightarrow 0$) and prove that the solution of the Riemann problem for \eqref{eq: main_system}-\eqref{Riemann_data} converges to the solution of the Riemann problem for \eqref{pressureless}. 

Furthermore, we consider the sensitivity of the developed Riemann solutions when the initial Riemann data \eqref{Riemann_data} is perturbed. In particular, we consider a Cauchy problem for \eqref{eq: main_system} with the initial data of the form
\begin{align}\label{initial_data}
    (h_0, b_0)^\top(x)=\begin{cases}
        (h_-, b_-)^\top, &{\rm{if}}~ -\infty<x<-\varepsilon,\\
        (h_m, b_m)^\top, & {\rm{if}}~ -\varepsilon<x<\varepsilon,\\
        (h_+, b_+)^\top, &{\rm{if}}~~\varepsilon<x<+\infty,
    \end{cases}
\end{align}
where $\varepsilon>0$ is small parameter and the state $(h_m, b_m)^\top\in \mathcal{U}$ is independent of $\varepsilon$. Solving the Cauchy problem \eqref{eq: main_system}-\eqref{initial_data} is not that straightforward as it involves the interaction between classical and non-classical nonlinear waves, including delta shocks. We adopt a similar method used in \cite{shen2010stability} to obtain the precise point(s), time(s) and curve(s) of interaction, which depend on the parameter $\varepsilon$. We also obtain the strength of the measure-valued delta shock solution for all different cases. By taking the limit $\varepsilon\rightarrow 0$, we prove case by case that the solution of the Cauchy problem \eqref{eq: main_system}-\eqref{initial_data} converges to the solution of \eqref{eq: main_system}-\eqref{Riemann_data}. Also, we prove that the asymptotic behaviour ($t\rightarrow \infty$) of the perturbed Riemann problem is governed by unperturbed left and right Riemann states.

This, in particular, implies that the solution of the Riemann problem for the system \eqref{eq: main_system} remains stable under the perturbation of physical parameters and initial data. We also validate these results by using the Godunov scheme based on the constructed Riemann solutions, which has been established as a prototype stable and convergent first-order finite volume scheme for many hyperbolic systems of conservation laws. We also use a finite-difference Eulerian-Lagrangian scheme developed in \cite{abreu2019fast, de2021interaction} for some examples, which include delta shocks.

We now summarize the key results of this article. For all different cases of initial data \eqref{initial_data} in $\mathcal{U}$ and parameter choices $\alpha$ and $\kappa$, we prove the following.
\begin{theorem}[Local wellposedness of smooth solutions in $\mathcal{U}^{\circ}$]\label{local_wellposedness}
Let the initial data \((h_0, b_0)\) be given such that 
\[
(h_0, b_0) \in (H^m(\mathbb{R}))^2, \quad m > \tfrac{3}{2}, 
\qquad h_0 > 0, \; b_0 > 0.
\]
Then, there exists a finite time \(T^* \in (0, \infty)\) for which the Cauchy problem 
associated with \eqref{eq: main_system} admits a unique solution. Moreover, the 
solution satisfies
\[
(h, b)^\top \in C\!\left([0, T^*]; (H^m(\mathbb{R}))^2\right) 
\cap C^1\!\left([0, T^*]; (H^{m-1}(\mathbb{R}))^2\right).
\]
\end{theorem}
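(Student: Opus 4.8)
The plan is to recast the conservation law \eqref{eq: main_system} as a quasilinear symmetric hyperbolic system on a compact subregion of $\mathcal{U}^{\circ}$ and then invoke the classical local existence theory (Friedrichs--Kato--Majda); the only system-specific ingredient is the symmetrizer, which the strictly convex entropy constructed earlier supplies for free. Concretely, I would first write \eqref{eq: main_system} in quasilinear form $U_t+A(U)U_x=0$, with $U=(h,b)^{\top}$ and
\[
A(U)=DF(U)=\begin{pmatrix} 2\alpha hb+\kappa h^2 & \alpha h^2 \\[2pt] \alpha b^2+\tfrac23\kappa hb & 2\alpha hb+\tfrac13\kappa h^2 \end{pmatrix},
\]
$F$ being the flux in \eqref{eq: main_system}. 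On $\mathcal{U}^{\circ}$ the eigenvalues of $A(U)$ are real and distinct, since the discriminant $\tfrac49\kappa^2h^4+4\alpha^2h^2b^2+\tfrac83\alpha\kappa h^3b$ is strictly positive for $h,b>0$; hence the system is strictly hyperbolic there, and by the Godunov--Mock correspondence the strictly convex entropy $\eta$ constructed in that region yields a symmetrizer $S(U):=D^2\eta(U)$ — symmetric positive definite, with $S(U)A(U)$ symmetric. Multiplying the equation by $S(U)$ gives a Friedrichs-symmetric quasilinear system whose coefficients are smooth and uniformly definite/symmetric on any compact $K\Subset\mathcal{U}^{\circ}$.

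Next I would localize and run the standard argument. Since $m>\tfrac32$, one has the embedding $H^m(\mathbb{R})\hookrightarrow C^1_b(\mathbb{R})$, so together with $h_0,b_0>0$ (bounded below by some $\delta_0>0$) the initial data takes values in the interior of a fixed compact $K\Subset\mathcal{U}^{\circ}$. A Picard iteration on the linearized/mollified problem — or the vanishing-viscosity regularization $U_t+A(U)U_x=\nu U_{xx}$, $\nu\downarrow0$ — combined with the differentiated energy estimate for the symmetrized system,
\[
\frac{d}{dt}\,\|U(t)\|_{H^m}^2 \;\le\; C\!\big(\|U(t)\|_{C^1}\big)\,\|U(t)\|_{H^m}^2,
\]
which rests on Moser-type (tame) product estimates and the commutator bound for $[\Lambda^m,A(U)]$ in $H^m(\mathbb{R})$ (here $H^{m-1}$ is a Banach algebra because $m-1>\tfrac12$), produces via Gr\"onwall a solution $U\in C([0,T^*];(H^m)^2)$ on an interval $[0,T^*]$ with $T^*\in(0,\infty)$ bounded below in terms of $\|(h_0,b_0)\|_{H^m}$ and $\delta_0$.

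The delicate step — and the one I expect to be the main (if ultimately routine) obstacle — is closing the bootstrap that keeps $U$ inside $K$, where the symmetrization is valid: the energy method breaks down at $\partial\mathcal{U}$, where $A$ loses strict hyperbolicity and $\eta$ its uniform convexity, so one must rule out $h$ or $b$ reaching $0$ on the existence interval. This is done by noting that, from the equation, $U_t=-A(U)U_x\in C([0,T^*];(H^{m-1})^2)\hookrightarrow C([0,T^*];(C^0_b)^2)$ (again $m-1>\tfrac12$), hence $\|U(t)-U_0\|_{L^\infty}\le\int_0^t\|U_t\|_{L^\infty}\,ds\lesssim t$; so, shrinking $T^*$ if necessary, $U(t,\cdot)$ stays in $K\Subset\mathcal{U}^{\circ}$, in particular $h,b\ge\delta_0/2>0$ on $[0,T^*]$, which keeps the symmetrization legitimate and closes the argument. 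Uniqueness follows from an $L^2$ energy estimate for the difference of two solutions, using that $A$ is Lipschitz on $K$ and $U_x\in L^\infty$; the time-regularity $U\in C^1([0,T^*];(H^{m-1})^2)$ is read off from $U_t=-A(U)U_x$ (product of an $H^m$ matrix and an $H^{m-1}$ vector, controlled since $m-1>\tfrac12$), and the upgrade from weak to strong continuity in time into $(H^m)^2$ is obtained by the usual Bona--Smith mollification argument.
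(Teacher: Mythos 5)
Your proposal is correct and follows essentially the same route as the paper: the paper's proof is a one-liner observing that the strictly convex entropies of Theorem \ref{entropy_theorem} make \eqref{eq: main_system} Friedrichs symmetrizable on $\mathcal{U}^{\circ}$ and then citing the classical Kato--Majda local existence theory, which is exactly the Godunov--Mock symmetrization plus energy-method argument you spell out (including the compactness/bootstrap step keeping the solution away from $\partial\mathcal{U}$, which the paper leaves implicit in the phrase ``in $\mathcal{U}^{\circ}$''). Your version simply fills in the standard details that the paper delegates to the references.
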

\begin{theorem}[Existence of Riemann solutions in $\mathcal{U}$]\label{theorem:Riemann}
For any given left state $(h_-, b_-)^\top\in \mathcal{U}$ and the right state $(h_+, b_+)^\top\in \mathcal{U}$ satisfying the assumption (\assref), the Riemann problem \eqref{eq: main_system}-\eqref{Riemann_data} admits a unique solution in $\mathcal{U}$.
\end{theorem}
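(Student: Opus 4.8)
The plan is to run the classical Riemann-problem construction adapted to the Keyfitz--Kranzer structure of \eqref{eq: main_system}: reduce the problem to finding a single intermediate state from two scalar compatibility relations, read off the wave types from the Lax conditions, and then treat the degenerate boundary configurations — where the intermediate state would leave $\mathcal U$ — with a measure-valued delta shock.

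\emph{Step 1: eigenstructure.} Writing the flux of \eqref{eq: main_system} as $\phi(h,b)\,(h,b)^\top$ with $\phi(h,b)=\alpha hb+\tfrac{\kappa}{3}h^2$, one computes the eigenvalues $\lambda_1=\phi$ and $\lambda_2=3\alpha hb+\kappa h^2$, with right eigenvectors $r_1\perp\nabla\phi$ and $r_2=(h,b)^\top$. Since $\nabla\lambda_1\cdot r_1=\nabla\phi\cdot r_1=0$, the $1$-field is linearly degenerate, so $1$-waves are contact discontinuities propagating with speed $\phi$ along the level curves $\{\phi=\mathrm{const}\}$; since $\nabla\lambda_2\cdot r_2=6\alpha hb+2\kappa h^2>0$ on $\mathcal U^\circ$, the $2$-field is genuinely nonlinear there. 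The relevant Riemann invariants are $w=b/h$ (constant on the integral curves of $r_2$, hence across $2$-rarefactions) and $\phi$ (a $1$-Riemann invariant). A short Rankine--Hugoniot computation shows that the $2$-shock Hugoniot curve through any state also lies on the ray $\{b/h=\mathrm{const}\}$, with shock speed $g(w)\,(h_-^2+h_-h_++h_+^2)$, $g(w):=\alpha w+\tfrac{\kappa}{3}$; the Lax inequalities then select a $2$-rarefaction when $h$ increases across the $2$-wave and an admissible $2$-shock when $h$ decreases, and $w$ is preserved in both cases.

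\emph{Step 2: the non-degenerate cases.} For $U_\pm=(h_\pm,b_\pm)^\top\in\mathcal U$ I look for a self-similar solution of the form $U_-\to U_*$ by a $1$-contact followed by $U_*\to U_+$ by a $2$-wave. The intermediate state is pinned down by $w(U_*)=b_+/h_+$ and $\phi(U_*)=\phi(U_-)$, i.e. $h_*^2\,g(b_+/h_+)=\phi(h_-,b_-)$; since $g\ge\tfrac{\kappa}{3}\ge0$ and $\phi\ge0$ on $\mathcal U$, whenever $b_+/h_+$ is finite with $g(b_+/h_+)>0$ this yields a unique $h_*\ge0$, hence $U_*=(h_*,(b_+/h_+)h_*)^\top\in\mathcal U$; comparing $h_*$ with $h_+$ fixes the type of the $2$-wave, one checks that the contact speed $\phi(U_*)$ does not exceed the left edge of the $2$-wave, and the solution is assembled (with trivial waves when $w(U_-)=b_+/h_+$ or $\phi(U_-)=\phi(U_+)$). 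Working through the finitely many configurations permitted by Assumption~(\assref) — all of $h_\pm,b_\pm$ positive, or exactly one of them zero — is then essentially mechanical, and simultaneously produces the vanishing-parameter limits $\kappa\to0$ and $\alpha\to0$. Uniqueness holds because $U_*$ is the unique intersection of the (strictly monotone) $1$- and $2$-wave curves and the wave types are forced by the entropy conditions.

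\emph{Step 3: the delta-shock cases.} Exactly two configurations escape Step~2: $h_+=0$, where the backward $2$-curve through $U_+$ degenerates to the $b$-axis on which $\phi\equiv0<\phi(U_-)$, and $\kappa=0$ with $b_+=0$, where it degenerates to the $h$-axis, again with $\phi\equiv0$. Here one checks directly that no combination of classical waves can join $U_-$ to $U_+$, and I construct a measure-valued solution carrying a Dirac mass along a single straight discontinuity $x=\sigma t$: the speed $\sigma$ and the (linearly growing in $t$) weights are determined by the generalized Rankine--Hugoniot relations obtained by testing both equations of \eqref{eq: main_system} against smooth functions, and the overcompressibility conditions $\lambda_i(U_+)\le\sigma\le\lambda_i(U_-)$, $i=1,2$, are verified; uniqueness of the delta shock follows from the unique solvability of these relations. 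I expect this delta-shock analysis — together with the care needed to rule out spurious solutions in the weakly hyperbolic boundary regime — to be the main obstacle, the interior construction being routine once the eigenstructure is in hand.
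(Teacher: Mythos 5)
Your proposal follows essentially the same route as the paper: the explicit $1$-contact curves (level sets of $\phi=\alpha hb+\kappa h^2/3$) and $2$-wave curves (rays $b/h=\mathrm{const}$), the explicitly solvable intermediate state $U_*$ with the wave type fixed by comparing $h_*$ with $h_+$, and, for $h_+=0$, a delta shock determined by the generalized Rankine--Hugoniot relations together with overcompressibility. The only differences are minor: the paper treats $h_-=0$ separately as a composite $JR$ wave obtained by a vanishing-$h_-$ perturbation (which your degenerate case $h_*=0$ reproduces), and its remark that vanishing $b_\pm$ stays classical implicitly takes $\kappa>0$, whereas you additionally flag $\kappa=0,\ b_+=0$ as a delta-shock configuration.
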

\begin{theorem}[Vanishing gravity and surface tension limit]\label{vanishing_theorem}\hspace{8 cm}\\
For every $T>0$,
\begin{enumerate}
\item Let $\alpha=1/2$. When the gravity parameter $\kappa\rightarrow 0$, the solution of the Riemann problem  \eqref{eq: main_system}-\eqref{Riemann_data} converges to the solution of the Riemann problem  \eqref{thin-film}-\eqref{Riemann_data} in the space of Radon measures $\mathcal{M}_{Loc}(\mathbb{R};\mathbb{R}^2)$. 
\item Let $\kappa=1$. When the surface tension parameter $\alpha\rightarrow 0$, the solution of the Riemann problem \eqref{eq: main_system}-\eqref{Riemann_data} converges to the solution of the Riemann problem  \eqref{pressureless}-\eqref{Riemann_data} with $f(h)=h^2/3$ in the space of  Radon measures $\mathcal{M}_{Loc}(\mathbb{R};\mathbb{R}^2)$. 
\end{enumerate}
\end{theorem}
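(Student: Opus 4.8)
The plan is to exploit the fact that both target systems are members of the same one-parameter family as \eqref{eq: main_system}: the system \eqref{thin-film} is the case $\kappa=0$ of \eqref{eq: main_system} with $\alpha=1/2$, and \eqref{pressureless} with $f(h)=h^2/3$ is the case $\alpha=0$ with $\kappa=1$. Hence the theorem is, at bottom, a statement of continuous dependence of the explicit Riemann solver of Theorem~\ref{theorem:Riemann} on the flux parameter, upgraded to weak-$*$ convergence of measures because delta shocks can occur. I would begin by recording the building blocks of that solver and their parameter dependence: writing $z=b/h$, the system \eqref{eq: main_system} carries the Riemann invariant $z$ transported at speed $\lambda_1=\alpha hb+\kappa h^2/3$, along which $h$ solves the scalar conservation law $h_t+\big((\alpha z+\kappa/3)\,h^3\big)_x=0$ with genuinely nonlinear speed $\lambda_2=3\alpha hb+\kappa h^2$; thus every Riemann solution consists of a genuinely nonlinear $h$-wave (shock or rarefaction), a linearly degenerate contact, and --- in the overcompressive cases identified in the proof of Theorem~\ref{theorem:Riemann} --- a delta shock. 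The eigenvalues, rarefaction curves, Hugoniot loci, contact lines, and the generalized Rankine--Hugoniot data of the delta shock are all explicit algebraic expressions, jointly smooth in $(h,b,\alpha,\kappa)$ on the relevant ranges, including the boundary values $\kappa=0$ and $\alpha=0$.

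Next I would go through the classification of Theorem~\ref{theorem:Riemann} case by case (the orderings of $h_\pm,b_\pm$ together with the degenerate configurations permitted by the assumption~(\assref)). In a case where the solution of \eqref{eq: main_system}-\eqref{Riemann_data} is classical, I would show --- using continuity of those explicit formulas and the implicit function theorem at the intersections of the relevant wave curves --- that the intermediate state $(h_m,b_m)$, every wave speed, and, along a rarefaction, the self-similar profile on compact subsets of the fan, converge as $\kappa\to0$ (resp.\ $\alpha\to0$) to the corresponding objects of the limit system. In a case with a delta shock, I would show that its propagation speed and its time-dependent weight $w(t)$, determined by the generalized Rankine--Hugoniot relations (and typically involving a square root of products of the one-sided states), converge to the delta-shock speed and weight of the limiting Riemann solution.

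With these ingredients in hand, the convergence in $\mathcal{M}_{Loc}(\mathbb{R};\mathbb{R}^2)$ would follow by working at the fixed time slice $t=1$ and invoking self-similarity for general $t>0$ (which also yields the stated convergence on any time interval $(0,T)$): one decomposes the solution as $(h^\kappa,b^\kappa)=(h^\kappa,b^\kappa)_{\mathrm{ac}}+\sum_i w_i^\kappa(t)\,\delta_{\{x=\sigma_i^\kappa t\}}$; the absolutely continuous part is bounded uniformly on compact sets and converges Lebesgue-a.e.\ (from the convergence of the intermediate states, the wave speeds, and the rarefaction profiles), hence in $L^1_{\mathrm{loc}}$ by dominated convergence, while each Dirac term converges when tested against $\varphi\in C_c(\mathbb{R})$ because $w_i^\kappa\to w_i^0$ and $\sigma_i^\kappa\to\sigma_i^0$. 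Part~(2) is handled in exactly the same way; the only additional remark is that $f(h)=h^2/3$ is convex on $\{h\ge0\}$, so the scalar $h$-equation of \eqref{pressureless} has precisely the anticipated shock/rarefaction solution and the transported component $b$ concentrates into a delta shock exactly where the $\alpha>0$ problem does.

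The main obstacle I anticipate is not any single estimate but the \emph{transition cases}: parameter values or data configurations for which the qualitative wave pattern of the $\kappa>0$ (resp.\ $\alpha>0$) solution differs from that of the limit --- for instance two classical waves that must coalesce into a single delta shock, or a contact that degenerates onto the edge of a rarefaction, as the parameter vanishes. There one cannot merely quote continuity of a fixed formula; instead I would prove that the family of wave curves converges in the Hausdorff sense together with their speeds, so that the entire self-similar profile still converges even though its ``type'' jumps, and then check directly that the limiting profile is the admissible Riemann solution of \eqref{thin-film} (resp.\ \eqref{pressureless}). A secondary technical point is to verify that the square-root-type delta-shock weights are genuinely continuous up to $\kappa=0$ and $\alpha=0$ (rather than merely bounded), which I expect to follow from the explicit algebra but must be confirmed in each delta-shock case.
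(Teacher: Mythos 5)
Your proposal is correct and follows essentially the same route as the paper: both arguments rest on the explicit, parameter-continuous formulas for the intermediate state, wave speeds, rarefaction profile, and the generalized Rankine--Hugoniot speed/strength of the delta shock, passing to the limit case by case and verifying the delta-shock convergence by testing against compactly supported functions (the paper does this via an explicit estimate of $\langle\mathbf{U}^\kappa-\mathbf{U}^0,\varphi\rangle$ split into a jump-location term and a Dirac term). The transition cases you flag do not actually occur here, since for fixed Riemann data the wave pattern stabilizes for small $\kappa$ (resp.\ $\alpha$) and delta shocks arise only when $h_+=0$, a condition independent of the parameter, so the paper never needs the Hausdorff-convergence machinery you propose as a fallback.
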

\begin{remark}
In the case of classical Riemann solutions, the limit solutions in Theorem \ref{vanishing_theorem} coincide with their $L^1$-limit.    
\end{remark}
\begin{theorem}[Stability of the Riemann solutions with perturbed initial data]\label{perturbed_RP}
As the perturbation parameter $\varepsilon \to 0$, the solution of the perturbed Riemann problem \eqref{eq: main_system}–\eqref{initial_data} converges to the solution of the corresponding Riemann problem \eqref{eq: main_system}–\eqref{Riemann_data} for all $(h, b)^\top \in \mathcal{U}$ in the space of Radon measures $\mathcal{M}_{Loc}(\mathbb{R};\mathbb{R}^2)$. Furthermore, as $t\rightarrow \infty$, the solution of the perturbed Riemann problem is governed by the initial states $(h_-, b_-)$ and $(h_+, b_+)$, which shows that the solution of the Riemann problem \eqref{eq: main_system}–\eqref{Riemann_data} is stable under small local perturbations of the initial data.
\end{theorem}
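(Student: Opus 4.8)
\textit{Proof proposal for Theorem \ref{perturbed_RP}.} The plan is to follow the wave--interaction strategy of \cite{shen2010stability}: treat the three--constant data \eqref{initial_data} as the superposition of two Riemann problems, one centered at $x=-\varepsilon$ with data $(h_-,b_-)$, $(h_m,b_m)$ and one centered at $x=\varepsilon$ with data $(h_m,b_m)$, $(h_+,b_+)$. By Theorem \ref{theorem:Riemann} each of these admits a unique self--similar solution in $\mathcal{U}$ built from rarefactions, (possibly overcompressive) shocks, contact discontinuities, and delta shocks; for small $t>0$ the two wave fans are disjoint, so the solution is their juxtaposition. The first task is to enumerate, up to the symmetry of the system, the finitely many combinatorial types of the ordered pair of fans — using the explicit Riemann invariants and the explicit intermediate--state formulas from the proof of Theorem \ref{theorem:Riemann} — and for each type to determine the first collision: which waves meet, at which point $(x_1,t_1)$, and which local Riemann problem is generated there.

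Next I would resolve each interaction. The structural fact to verify case by case is that every collision either (i) merges two waves of the same family into a single wave of that family, (ii) lets two transversal waves pass through each other with updated constant states in between, or (iii) absorbs a classical wave into a delta shock, increasing its strength by an explicitly computable amount. In every case the total number of waves is non--increasing, and all interactions take place in a bounded space--time region of diameter $O(\varepsilon)$, so after finitely many collisions the configuration stabilizes. One then checks that the stabilized configuration is exactly the wave fan of the Riemann problem \eqref{eq: main_system}--\eqref{Riemann_data} with data $(h_-,b_-)$, $(h_+,b_+)$: composing the two chains of Rankine--Hugoniot and rarefaction relations for $(h_-,b_-)\to(h_m,b_m)\to(h_+,b_+)$ must collapse, by the uniqueness in Theorem \ref{theorem:Riemann}, to the single connection $(h_-,b_-)\to(h_+,b_+)$, and for delta--shock cases the accumulated weight must equal the weight prescribed by the generalized Rankine--Hugoniot conditions of the unperturbed problem. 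This also yields the $t\to\infty$ assertion: for $t$ beyond the last interaction time the solution coincides, outside an $O(\varepsilon)$ neighborhood of the origin, with the unperturbed self--similar Riemann solution, so its large--time asymptotics is governed by $(h_-,b_-)$ and $(h_+,b_+)$.

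For the limit $\varepsilon\to0$ I would make the $\varepsilon$--dependence explicit: each interaction point, interaction time, and (for delta shocks) the approximate weight function and support curve depends continuously — in the simplest cases affinely — on $\varepsilon$ and converges as $\varepsilon\to0$ to the corresponding object of the unperturbed solution. For configurations with only classical waves, pointwise convergence of the wave locations together with the uniform $L^\infty$ bound from the state space $\mathcal{U}$ gives convergence in $\mathcal{M}_{Loc}(\mathbb{R};\mathbb{R}^2)$ by dominated convergence, which upgrades to $L^1_{loc}$ as in the remark after Theorem \ref{vanishing_theorem}. For configurations containing a delta shock I would test against $\varphi\in C_c(\mathbb{R};\mathbb{R}^2)$ and pass to the limit in $\langle\mu_\varepsilon(\cdot,t),\varphi\rangle$ using the explicit weight and the convergence of the delta--shock trajectory to its unperturbed counterpart.

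The main obstacle is the case analysis for interactions involving delta shocks — in particular when a perturbation wave of the ``wrong'' family overtakes a delta shock, or when two delta--shock--generating fans collide — since there one must set up and solve the generalized Rankine--Hugoniot system for the evolving weight, verify the overcompressibility (entropy) condition, and check that the limiting weight matches the unperturbed prescription. The accompanying combinatorial bookkeeping, namely ensuring that no interaction pattern is omitted and that the wave count strictly decreases at the relevant steps so that the interaction process terminates, is the other place requiring care.
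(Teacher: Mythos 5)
Your overall strategy is the same as the paper's: split \eqref{initial_data} into two local Riemann problems, track the finitely many admissible wave-fan pairings (the paper lists seven), resolve each collision with explicit interaction points and, for delta shocks, the generalized Rankine--Hugoniot system, then let $\varepsilon\to0$ and read off the large-time behaviour. However, two of your structural claims are genuinely wrong as stated, and they are exactly where the paper has to work hardest. First, it is not true that all interactions occur in a space--time region of diameter $O(\varepsilon)$ and that the configuration ``stabilizes after finitely many collisions.'' When a shock (or delta shock) enters a rarefaction fan, its trajectory becomes a genuinely curved solution of an ODE, and whether it exits the fan in finite time depends on the ordering of $h_-b_-$ and $h_+b_+$: in the subcases $h_3b_3\le h_+b_+$ (and the analogous delta-shock case) the penetration never terminates, the perturbed solution never coincides with the unperturbed one at any finite time, and the $t\to\infty$ assertion must instead be proved by showing the curved shock has the unperturbed wave as its asymptote. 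Your argument ``for $t$ beyond the last interaction time the solution coincides with the unperturbed Riemann solution'' therefore fails in those subcases.

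Second, your taxonomy (i)--(iii) of collision outcomes omits the key nonclassical scenario, which is the paper's Case III ($\delta S_1$ meeting the composite wave $J_2R_2$ of \eqref{composite_wave}): there the overcompressibility condition \emph{fails} after the collision, so the delta shock does not absorb the incoming wave and grow; instead it splits into a delta \emph{contact} discontinuity carrying the frozen weight $\beta(t_1)$ plus a classical shock that penetrates the rarefaction. Handling this requires writing down the candidate measure-valued solution explicitly and verifying the weak formulation (the Dirac term must cancel in the second equation), setting up the generalized Rankine--Hugoniot conditions only where overcompressibility holds; none of this is captured by ``absorbs a classical wave into a delta shock, increasing its strength.'' Finally, the obstacle you flag about two colliding delta-shock fans is vacuous here: by the nonnegativity of $(h,b)$ in $\mathcal{U}$ such a configuration cannot arise, as the paper remarks. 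With these corrections — asymptotic (rather than finite-time) merging in the non-penetrating subcases, and the delta-shock splitting mechanism with its weak-solution verification — your plan matches the paper's proof.
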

\begin{remark}
Theorem \ref{vanishing_theorem} proves that the Riemann solutions for \eqref{eq: main_system} remain stable under the gravity and surface tension parameters. This also indicates that the nonlinear flux due to gravity is a stable flux perturbation for the thin film system driven by only surface tension and vice versa.
\end{remark}

The remainder of the article is arranged as follows. In Section \ref{sec: modelling}, we develop the mathematical model \eqref{eq: main_system}. In Section \ref{sec: 2}, we discuss the hyperbolicity of the system \eqref{eq: main_system} and develop a class of strictly convex entropies. Section \ref{Sec: Riemann} discusses the Riemann problem \eqref{eq: main_system}-\eqref{Riemann_data} and analyze the behaviour of the system with vanishing physical parameters. In Section \ref{sec: 3}, we develop the solutions of the Cauchy problem \eqref{eq: main_system}-\eqref{initial_data} and prove that the Riemann solutions remain stable as $\varepsilon \rightarrow 0$. In Section \ref{sec: numerics}, we validate our analysis via some numerical examples. Concluding remarks and future outlooks are provided in Section \ref{sec: 5}.

\section{A hyperbolic model for the thin film flow under the influence of gravity and solute transport}\label{sec: modelling}
In this section, we develop the hyperbolic model \eqref{eq: main_system}, which governs the first-order dynamics of a thin film flow under the influence of a perfectly soluble anti-surfactant solute and gravity. As the model considered here is a natural extension of the model developed in \cite{barthwal2023construction, conn2016fluid}, we leave most of the physical details of the model and refer the interested reader to \cite{barthwal2023construction, conn2016fluid} for more details; see also \cite{barthwal2025hyperbolic, barthwal2025generalized}.
\begin{figure}
    \centering
    \includegraphics[width=0.75\linewidth]{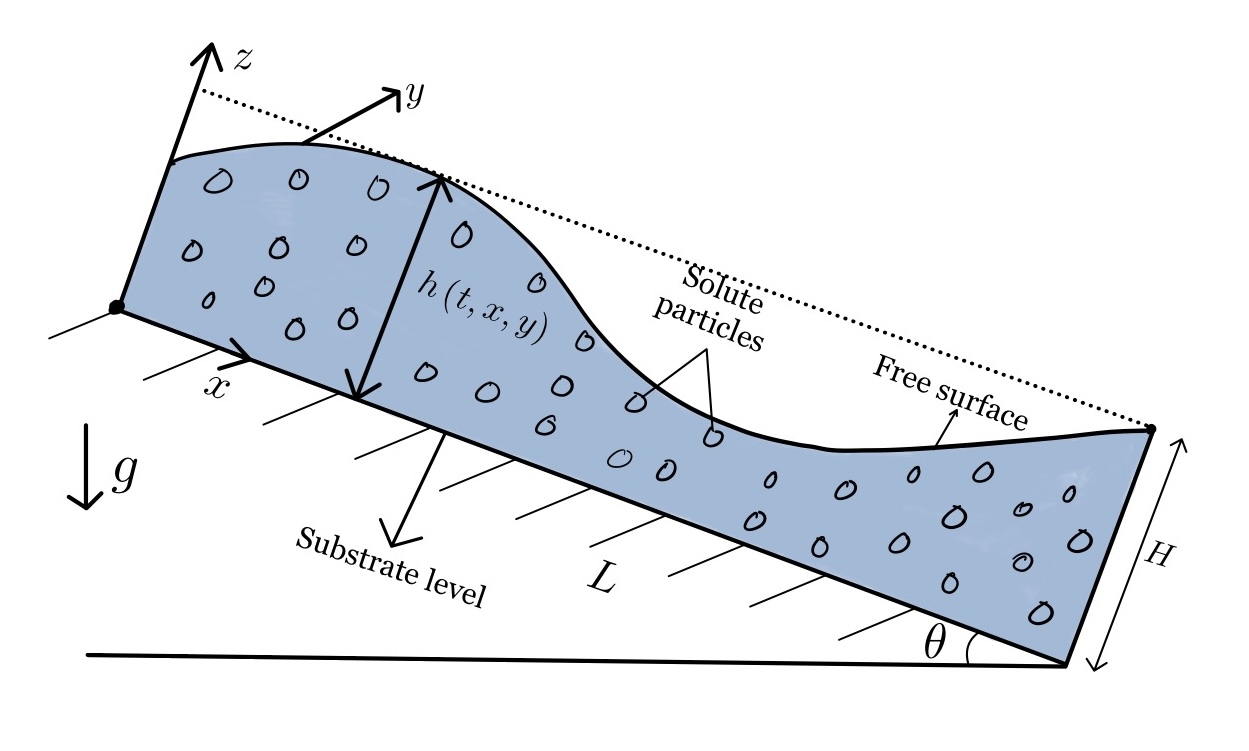}
    \caption{A thin film flow over an inclined plane with solute transport.}
    \label{fig:thin_film}
\end{figure}
\subsection{Evolution equations of film height and concentration}
Here we consider the model considered in \cite{barthwal2023construction} and \cite{conn2016fluid}, but consider the geometry such that the substrate is on an inclined plane. In this case, gravity effects can not be neglected. The free surface is assumed to be located at a height $z=h(x, y, t)$, see Figure \ref{fig:thin_film}. Following \cite{barthwal2023construction, conn2016fluid}, we assume that $\epsilon=H/L<\!<\! 1$ for thin film flow, where $H$ and $L$ are characteristic height and length, respectively. We use a similar non-dimensional scaling as in \cite{barthwal2023construction} and \cite{matar2004rupture}, and neglect the terms of order $\epsilon^2$ in the incompressible Navier-Stokes equations, along with the concentration transport equation for a perfectly soluble solute. This then leads to a reduced system of governing equations of the form
\begin{subequations}\label{eq: eq_of_motion}
\begin{align}
   \partial_x u+\partial_y v+\partial_z w&=0, \label{eq: 2.3a} \\
   \partial_x p&=\partial^2_z u+G\sin \theta,\label{eq: 2.3b}\\
   \partial_y p&=\partial^2_z v,\label{eq: 2.3b}\\
\partial_z p&=0, \label{eq: 2.3d}\\
    \partial_t c_0+\mathbf{u}\cdot\nabla c_0&=(\mathrm{P_{b}})^{-1}\left(\Delta c_0+\partial^2_z c_1\right).\label{eq: 2.3e} 
\end{align}
\end{subequations}
In \eqref{eq: 2.3a}-\eqref{eq: 2.3e}, $\mathbf{u}=(u, v)$ is the $x-y$ components while $w$ is the $z$-component of the non-dimensional velocity vector, $p$ is the pressure, $\theta$ is the inclination angle, $G$ is the non-dimensional gravity coefficient, $\Delta=(\partial_x^2, \partial_y^2)$ is the Laplacian operator, $\mathrm{P_b}$ is the Peclet number with $\mathrm{Pb}=O(1)$ and the concentration $c$ is assumed to satisfy an expansion of the form \cite{barthwal2023construction}
\[c=c_0(x, y, t)+\epsilon^2 c_1(x, y, z, t)+O(\epsilon^4).\]

Moreover, by neglecting the terms of order $\epsilon^2$ and simplifying, we obtain non-dimensional boundary conditions of the form \cite{barthwal2023construction}
\begin{subequations}\label{eq:2.17main}
\begin{align}
u=v=w   & = 0                                     & \mathrm{on}\hspace{0.5 cm}\{z=0\}, \label{eq:2.4a}\\
\partial_t h+\mathbf{u}\cdot \nabla h 
      & = w                                     & \mathrm{on}\hspace{0.5 cm}\{z=h\}, \label{eq:2.4c}\\
-p    & = (\mathrm{Ca})^{-1}\Delta h          & \mathrm{on}\hspace{0.5 cm}\{z=h\}, \label{eq:2.4f}\\
\partial_z \mathbf{u} 
      & = \mathrm{Ma}\nabla c_0                 & \mathrm{on}\hspace{0.5 cm}\{z=h\}, \label{eq:2.4g}\\
\partial_z c_1 
      & = 0                                     & \mathrm{on}\hspace{0.5 cm}\{z=0\}, \label{eq:2.4i}\\
\partial_z c_1 
      & = \nabla c_0\cdot \nabla h              & \mathrm{on}\hspace{0.5 cm}\{z=h\}. \label{eq:2.4j}
\end{align}
\end{subequations}
In \eqref{eq:2.4a}-\eqref{eq:2.4j}, $\mathrm{Ca}$ is the Capillarity number, $\mathrm{Ma}$ is the Marangoni number both of first order and $\nabla=(\partial_x, \partial_y)$ is the usual gradient operator (see \cite{conn2016fluid} for more details on the scalings of these dimensionless numbers).
\subsubsection{Evolution equation for the film thickness.}
Using the kinematic boundary conditions \eqref{eq:2.4c}, the evolution equation of film thickness is given by
\begin{align}\label{film_height}
\partial_t h + \int_{0}^{h} u(z)\,dz + \int_{0}^{h} v(z)\,dz = 0,
\end{align}
where the velocity components are obtained using \eqref{eq: eq_of_motion}-\eqref{eq:2.17main} and are given by
\begin{align*}
   u(z)&=\left(\partial_x p-G\sin \theta\right)z(z/2-h)+z\partial_x c_0,\\  
   v(z)&= z(z/2-h)\partial_y p.
\end{align*}
Therefore, the evolution equation for the film thickness can now be obtained from \eqref{film_height}, and reads as
\begin{align}\label{eq: film_height}
    \partial_t h+\nabla.\left(\dfrac{h^3}{3{\rm{Ca}}}\nabla \Delta h+\dfrac{h^2 {\rm{Ma}}}{2}\mathbf{b}\right)+\partial_x \left(\dfrac{h^3 G\sin \theta}{3}\right)=0,
\end{align}
where $\mathbf{b}=\left(\partial_x c_0, \partial_y c_0\right)^\top$ is the concentration gradient vector.

\subsubsection{Evolution equation for the bulk concentration.}
The evolution equation for the leading order term $c_0$ can be obtained by integrating \eqref{eq: 2.3e} with respect to $z$ from $\{z=0\}$ to $\{z=h\}$ with boundary conditions \eqref{eq:2.4i} and \eqref{eq:2.4j}. The evolution equation of $c_0(x, y,  t)$ reads as
\begin{equation}\label{eq: concentration}
    h\partial_t c_0+Q_1 \partial_x c_0+Q_2\partial_y c_0-(\mathrm{P_{b}})^{-1}\nabla \cdot(h\mathbf{b})=0,
\end{equation}
where $Q_i$, $i\in \{1, 2\}$ are the volumetric fluxes defined as
\begin{align*}
    Q_1&=(G\sin \theta-\partial_x p)\dfrac{h^3}{3}+\dfrac{h^2}{2}\partial_x c_0,\\
    Q_2&=(-\partial_y p)\dfrac{h^3}{3}+\dfrac{h^2}{2}\partial_y c_0.
\end{align*}.
\subsection{Reduced evolution equations for first-order dynamics}
Following \cite{conn2017simple}, we assume that the capillarity and diffusivity effects are negligible and choose the velocity and length scales such that $\mathrm{Ma}=2\alpha$ and $G\sin \theta=\kappa$, where $\alpha$ and $\kappa$ are nonnegative scaled dimensionless parameters. Under these simplified assumptions, we can leave the higher-order terms from the evolution equations \eqref{eq: film_height}-\eqref{eq: concentration} and obtain reduced first-order governing equations of the form
\begin{equation}\label{eq:2.9new}
\begin{aligned}
 \frac{\partial h}{\partial t}
 + \nabla\cdot\!\left(\alpha h^{2}\mathbf{b}\right)
 + \frac{\partial}{\partial x}\!\left(\frac{\kappa h^{3}}{3}\right) &= 0,\\[0.3em]
 \frac{\partial c}{\partial t}
 + \alpha h\lvert\mathbf{b}\rvert^{2}
 + \frac{\kappa h^{2}}{3}\left(\frac{\partial c}{\partial x}\right) &= 0.
\end{aligned}
\end{equation}
where we have dropped the subscript from $c_0$.

Now, we differentiate the second equation in \eqref{eq:2.9new} with respect to $x$ and $y$ and  obtain
for the film thicknesses $h$ and the concentration gradient $\mathbf{b}=\left(\partial_x c, \partial_y c\right)^\top=(b_1, b_2)^\top$
the following set of nonlinear, first-order equations: 
\begin{equation}\label{eq: Main_system_new_updated}
\begin{aligned}
    \dfrac{\partial h}{\partial t}+\alpha \nabla\cdot\left(h^2\mathbf{b}\right)+\dfrac{\partial}{\partial x}\left(\dfrac{\kappa h^3}{3}\right)&=0,\vspace{0.2 cm} \\
    \dfrac{\partial \mathbf{b}}{\partial t}+\alpha \nabla\cdot\left(h|\mathbf{b}|^2 \mathbf{I}\right)+\nabla\left(\dfrac{\kappa h^2b_1}{3}\right)&=0.
 \end{aligned}
\end{equation}
In particular, for the one-dimensional case, i.e., when the $y$ component is zero, we obtain the governing system \eqref{eq: main_system}.
\section{Hyperbolicity of the system \eqref{eq: main_system} and the local wellposedness of Cauchy problem}\label{sec: 2}
In this section, we discuss the basic properties of the system \eqref{eq: main_system}, which is the one-dimensional version of \eqref{eq: Main_system_new_updated} and develop a class of strictly convex entropies for it.
\subsection{Hyperbolicity and Riemann invariants of the system \eqref{eq: main_system}}
For smooth solutions, \eqref{eq: main_system} can be written in the primitive form
\begin{align}
\mathbf{U}_t+\mathbf{DF(U)}~\mathbf{U}_x=0,
\end{align}
where \[\mathbf{U}=\begin{bmatrix}h\\ b
\end{bmatrix}, ~\text{and}~\mathbf{DF(U)}=\begin{bmatrix}
    \alpha hb+\kappa h^2 & \alpha h^2/2\\
    \alpha b^2/2+2\kappa hb/3& \alpha hb+\kappa h^2/3
\end{bmatrix}.\]
By a direct calculation, the eigensystem of the system \eqref{eq: main_system} is given by
\begin{align}\label{eigensystem}
\lambda_1(h, b) &= \alpha hb+\dfrac{\kappa h^2}{3}, \quad &\mathbf{r}_1&=(-3\alpha h, 3\alpha b+2\kappa h)^\top , \\
\lambda_2(h, b) &= 3\alpha hb+\kappa h^2, \quad&\mathbf{r}_2& = (h, b)^\top. 
\end{align}
A straightforward calculation leads us to $\nabla_\mathbf{U} \lambda_1\cdot \mathbf{r}_1 = 0$, while 
\[\nabla_\mathbf{U} \lambda_2\cdot \mathbf{r}_2 = h(3\alpha b+2\kappa h)\neq 0~\mathrm{for}~(h, b)^\top\in \mathcal{U}^{\circ}.\]
This implies that the associated characteristic field of $\lambda_1$ is linearly degenerate while that of $\lambda_2$ is either genuinely nonlinear in the interior or linearly degenerate on the borderlines of the state space $\mathcal{U}$. Moreover, we have a clear ordering of the eigenvalues given by 
\[\lambda_1(h, b) < \lambda_2(h, b)=3\lambda_1(h, b),~{\rm{for}}~(h, b)^\top\in \mathcal{U}^{\circ}\] 
and 
\[\lambda_1(h, b) = \lambda_2(h, b) = 0,~{\rm{for}}~ (h, b)^\top\in \partial \mathcal{U}.\] 

This implies that the system \eqref{eq: main_system} is strictly hyperbolic in the interior of the state space \eqref{state_space} while for $h=0$ (at the boundaries), the system \eqref{eq: main_system} becomes weakly hyperbolic.

From the characteristic analysis, it follows that the first characteristic field is always linearly degenerate. Consequently, the associated wave is a contact discontinuity $J$. On the other hand, the second characteristic field is genuinely nonlinear. Therefore, the corresponding wave is either a rarefaction wave $R$ or a shock wave $S$ whenever the state variables $(h,b)^\top \in  \mathcal{U}^{\circ}$. For states on the boundary of the admissible set, i.e., when $(h,b)^\top \in \partial \mathcal{U}$, the wave structure is no longer purely classical. In this case, one may encounter measure-valued solutions or composite waves. We discuss these situations in Section \ref{sec: Riemann_solution_at_boundary}.
\begin{remark}
For the case when $\alpha=0$, $b=0$ and $\kappa=1$, the system \eqref{eq: main_system} collapses to the thin film equation under the influence of gravity, which has been analyzed as a prototype example of scalar conservation laws with nonconvex flux for which the existence of nonclassical waves such as undercompressive shock waves is a well-known result, see e.g., \cite{bertozzi1999undercompressive, Chalonsetal} and references cited therein. Therefore, it would be interesting to analyze whether the formal limit $b\rightarrow 0$ of the system \eqref{eq: main_system} leads to undercompressive shock waves. However, we don't discuss that situation in this article.     
\end{remark}
\subsubsection{Riemann invariants of the system \eqref{eq: main_system}}
By definition, the Riemann invariants $w_i(h, b)$ are parallel to the left eigenvectors such that for $i\in \{1, 2\}$
\[\nabla_{\mathbf{U}} w_i\cdot r_i=0,\]
where $\nabla_{\mathbf{U}}=(\partial_h, \partial_b)^\top$. Therefore, one can easily obtain the Riemann invariants given by
\begin{align}\label{Riemann_invariants}
      w_1= \alpha hb+\dfrac{\kappa h^2}{3}, \qquad w_2=\dfrac{b}{h}.
\end{align}
In view of explicit Riemann invariants, we can obtain a class of strictly convex entropies for the system \eqref{eq: main_system} defined in the regions where the system remains strictly hyperbolic.
\subsection{Entropy-entropy pairs for the system \eqref{eq: main_system} in $\mathcal{U}^{\circ}$}
In this section, we obtain a class of entropy-entropy pairs $(\eta, q)$ for the system \eqref{eq: main_system}. Moreover, we provide the sufficient conditions under which the obtained entropies are strictly convex. Precisely, we prove the following.
\begin{theorem}[Strictly convex entropies]\label{entropy_theorem}
Let $\Psi$ and $\Theta$ be arbitrary functions in  $C^1(0,\infty)$. There exists a class of entropy/entropy-flux pairs $(\eta, q)$ for the system \eqref{eq: main_system} defined in $\mathcal{U}^{\circ}$. They are given by
\begin{equation}\label{eq:entropyclass}
\begin{array}{rcl}
 \eta[h, b]&=& \Psi(w_1)+\sqrt{w_1}\Theta\left(p\right),\\[1ex]
 q[h, b]&=&3\left(w_1\Psi(w_1)-\displaystyle\int^{w_1} \Psi(w_1)\, dw_1\right)+(w_1)^{3/2}\Theta(p),
\end{array}
\end{equation}
where $w_1$ and $w_2$ are the Riemann invariants of the system \eqref{eq: main_system} defined in \eqref{Riemann_invariants} and $p$ is a function of $w_2$ defined by $p=3\alpha w_2+\kappa$.  

Moreover, a sufficient condition for the entropy $\eta[h, b]$ to be strictly convex is that the functions $\Psi$ and $\Theta$ satisfy
\begin{align}
\Psi''(w_1)>0, ~\Psi'(w_1)<0,~2w_1\Psi''(w_1)+\Psi'(w_1)>0, ~\Theta(p)= A \sqrt{p}+\dfrac{B}{\sqrt{p}},   
\end{align}
for some arbitrary nonnegative constants $A$ and $B$.
\end{theorem}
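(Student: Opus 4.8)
The plan is to pass to the Riemann-invariant coordinates $(w_1,w_2)$ of \eqref{Riemann_invariants}, in which the entropy equation decouples, and to read off the family \eqref{eq:entropyclass} by quadrature. Recall that $(\eta,q)$ is an entropy/entropy-flux pair for \eqref{eq: main_system} precisely when $\nabla_{\mathbf U}q=(\mathbf{DF(U)})^{\top}\nabla_{\mathbf U}\eta$. Contracting this identity with the right eigenvectors $\mathbf r_1,\mathbf r_2$ and using $\mathbf{DF}\,\mathbf r_k=\lambda_k\mathbf r_k$ gives $\mathbf r_k\cdot\nabla q=\lambda_k\,(\mathbf r_k\cdot\nabla\eta)$ for $k=1,2$; since $\mathbf r_k\cdot\nabla w_k=0$, in the coordinates $(w_1,w_2)$ these reduce to the scalar relations $\partial_{w_1}q=\lambda_2\,\partial_{w_1}\eta$ and $\partial_{w_2}q=\lambda_1\,\partial_{w_2}\eta$. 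The feature of \eqref{eq: main_system} that makes everything explicit is that, by \eqref{eigensystem} and \eqref{Riemann_invariants}, $\lambda_1=w_1$ and $\lambda_2=3w_1$, so the two relations become $\partial_{w_1}q=3w_1\,\partial_{w_1}\eta$ and $\partial_{w_2}q=w_1\,\partial_{w_2}\eta$.

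Equating $\partial_{w_1}\partial_{w_2}q$ computed from the two relations yields the single linear equation $\partial_{w_2}\eta=2w_1\,\partial_{w_1}\partial_{w_2}\eta$. Writing $\phi:=\partial_{w_2}\eta$, this is the first-order ODE $\phi=2w_1\,\partial_{w_1}\phi$ in $w_1$ for each frozen $w_2$, with general solution $\phi=\sqrt{w_1}\,c(w_2)$; integrating once more in $w_2$ gives $\eta=\Psi(w_1)+\sqrt{w_1}\,\Theta(p)$, where $\Psi$ is the arbitrary $w_2$-independent term, $\Theta$ an antiderivative of $c$, and the arbitrary function of $w_2$ has simply been relabelled through the affine substitution $p=3\alpha w_2+\kappa$. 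Substituting this $\eta$ into $\partial_{w_1}q=3w_1\,\partial_{w_1}\eta$ and integrating in $w_1$ — using the integration by parts $3\!\int^{w_1}\! s\Psi'(s)\,ds=3w_1\Psi(w_1)-3\!\int^{w_1}\!\Psi(s)\,ds$ — then imposing the remaining relation $\partial_{w_2}q=w_1\,\partial_{w_2}\eta$ (which forces the $w_2$-dependent constant of integration to vanish) produces exactly the $q$ in \eqref{eq:entropyclass}. Alternatively, one may simply verify directly that the displayed pair satisfies $\nabla q=\mathbf{DF}^{\top}\nabla\eta$; the derivation above is preferable since it also shows the family is exhaustive.

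For the strict convexity, the decisive observation is that when $\Theta(p)=A\sqrt p+B/\sqrt p$ the term $\sqrt{w_1}\,\Theta(p)$ is in fact affine in $(h,b)$, so it drops out of the Hessian. Indeed $w_1=\tfrac13 h^2p$ by \eqref{Riemann_invariants}, hence $\sqrt{w_1}=h\sqrt{p/3}$ and $\sqrt{w_1}\,\Theta(p)=\tfrac{1}{\sqrt3}\,h\,(Ap+B)=\tfrac{1}{\sqrt3}\big(3\alpha A\,b+(\kappa A+B)\,h\big)$. Therefore $\nabla^2_{(h,b)}\eta=\nabla^2_{(h,b)}\Psi(w_1)=\Psi''(w_1)\,\nabla w_1\otimes\nabla w_1+\Psi'(w_1)\,\nabla^2 w_1$ with $\nabla w_1=\big(\alpha b+\tfrac{2\kappa h}{3},\,\alpha h\big)$ and $\nabla^2 w_1=\left(\begin{smallmatrix}2\kappa/3 & \alpha\\\alpha & 0\end{smallmatrix}\right)$. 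A short computation gives $\det\nabla^2_{(h,b)}\eta=-\alpha^2\Psi'(w_1)\big(2w_1\Psi''(w_1)+\Psi'(w_1)\big)$ and $(2,2)$-entry $\alpha^2h^2\Psi''(w_1)$; under the stated hypotheses (with $\alpha>0$, so that $w_1>0$ on $\mathcal U^{\circ}$) both are strictly positive, and a symmetric $2\times2$ matrix with positive determinant and positive lower-right entry is positive definite, so $\eta$ is strictly convex on $\mathcal U^{\circ}$.

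I expect the convexity step to be the only real obstacle, and within it the point to spot is the collapse of $\sqrt{w_1}\,\Theta(p)$ to an affine function for exactly the prescribed $\Theta$ — this is what reduces everything to the $2\times2$ Hessian of $\Psi(w_1)$ and makes the determinant match the hypothesis $2w_1\Psi''+\Psi'>0$ term for term. It is also worth flagging that strict convexity genuinely needs $\alpha>0$: for $\alpha=0$ one has $w_1=\kappa h^2/3$ and $p\equiv\kappa$, so $\eta$ becomes a function of $h$ alone and cannot be strictly convex in the $(h,b)$-plane. The coordinate change, the ODE integration, and the determinant evaluation are otherwise routine.
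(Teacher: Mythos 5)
Your construction of the entropy class follows essentially the paper's route: both arguments pass to the Riemann invariants, use $\lambda_1=w_1$, $\lambda_2=3w_1$, arrive at the same compatibility equation $2w_1\eta_{w_1w_2}-\eta_{w_2}=0$, and solve it by quadrature (you are merely more explicit about how the two scalar relations $\partial_{w_1}q=3w_1\partial_{w_1}\eta$ and $\partial_{w_2}q=w_1\partial_{w_2}\eta$ follow from contracting the entropy identity with the eigenvectors). The convexity half is where you genuinely diverge, and your version holds up. The paper computes $\mathbf{r}_i^{\top}\nabla^2_{(h,b)}\eta\,\mathbf{r}_i$ for $i=1,2$ via the Hessian transformation identity and obtains the ODE $4p^2\Theta''+4p\Theta'-\Theta=0$, whose solutions are exactly $A\sqrt{p}+B/\sqrt{p}$, then invokes the criterion for diagonal systems that positivity along the two eigenvector directions yields strict convexity. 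You instead notice that for $\Theta(p)=A\sqrt{p}+B/\sqrt{p}$ the term $\sqrt{w_1}\,\Theta(p)=\tfrac{1}{\sqrt{3}}\bigl(3\alpha A\,b+(\kappa A+B)h\bigr)$ is affine in $(h,b)$, so the Hessian of $\eta$ collapses to that of $\Psi(w_1)$, and you verify positive definiteness directly from the $(2,2)$-entry $\alpha^2h^2\Psi''$ and the determinant $-\alpha^2\Psi'\bigl(2w_1\Psi''+\Psi'\bigr)$; both computations check out, and the sign hypotheses on $\Psi$ are used exactly where you say. What your route buys is an unconditional check of full positive definiteness of the $(h,b)$-Hessian (no appeal to the directional criterion, whose sufficiency is the delicate point in the paper's argument), plus a transparent explanation of why precisely this $\Theta$ works; what it gives up is the derivation that singles out $\Theta=A\sqrt{p}+B/\sqrt{p}$ as the solution of the vanishing-term ODE, i.e., you verify the prescribed $\Theta$ rather than discover it. Your side remarks are also accurate: strict convexity indeed needs $\alpha>0$ (the paper's second quadratic form carries the factor $9\alpha^2$, so the same caveat is implicit there), and your argument never uses $A,B\geq 0$, which is harmless since the theorem only asserts a sufficient condition.
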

\begin{proof}
Since $w_1=\alpha hb+\dfrac{\kappa h^2}{3}$ and $w_2=3\alpha hb+\kappa h^2$ are the Riemann invariants of the system \eqref{eq: main_system}, we must have
\begin{equation}\label{diagonal_system}
\begin{aligned}
    (w_1)_t+3w_1 (w_1)_x=0,\\
    (w_2)_t+w_1 (w_2)_x=0,
    \end{aligned}
\end{equation}
where we have used the fact that $\lambda_2(w_1, w_2)=3\lambda_1(w_1, w_2)=3w_1$.

Thus by comparing the mixed derivatives of $q$, any entropy pair $(\eta, q)[w_1, w_2]$ must satisfy
\begin{align*}
    2w_1\eta_{w_1w_2}-\eta_{w_2}=0,
\end{align*}
which implies that any entropy $\eta$ should be of the form 
\begin{align}
    \eta[w_1, w_2]=\Psi(w_1)+\sqrt{w_1}\Theta(w_2),
\end{align}
along with the entropy flux given by
\begin{align}
    q[w_1, w_2]=3\left(w_1\Psi(w_1)-\displaystyle\int \Psi(w_1)\, dw_1\right)+(w_1)^{3/2}\Theta(w_2).
\end{align}
In view of the diagonal system \eqref{diagonal_system}, a necessary and sufficient condition \cite{dafermos2005hyperbolic} for $\eta$ to be strictly convex is that 
\[\mathbf{r_i^\top} \nabla^2_{(h, b)} \eta(w_1, w_2) \mathbf{r_i}>0 ~\text{for}~ i\in \{1, 2\},\]
where $\mathbf{r}_i$ are the eigenvectors of the system defined in \eqref{eigensystem} and $\nabla^2_{(h, b)}$ is the Hessian operator w.r.t. $(h, b)$. 

First, we use the Hessian identity
\begin{align}\label{hessian_identity}
 \nabla^2_{(h, b)} \eta(w_1, w_2)=D^\top \nabla^2_{(w_1, w_2)} \eta(w_1, w_2) D+\eta_{w_1}\nabla^2_{(h, b)} w_1+\eta_{w_2}  \nabla^2_{(h, b)} w_2, 
\end{align}
where $D$ denotes the Jacobian of $(w_1, w_2)$ w.r.t. $(h, b)$.

An easy computation gives 
\[D\mathbf{r_1}=(2w_1, 0)^\top ~\text{and}~D\mathbf{r_2}=(0, 6\alpha (3\alpha w_2+\kappa))^\top=(0, 6\alpha p)^\top,\]
where $p=3\alpha w_2+\kappa$. Therefore, for $i\in \{1, 2\}$ we multiply \eqref{hessian_identity} by $\mathbf{r_i^\top}$ from the left and by $\mathbf{r_i}$ from the right, and obtain 
\begin{equation}\label{hessian_identity_new}
\begin{aligned}
 \mathbf{r_1^\top} \nabla^2_{(h, b)} \eta(w_1, w_2)\mathbf{r_1}=4w_1^2\eta_{w_{1}w_1}+\eta_{w_1}\mathbf{r_1^\top} \nabla^2_{(h, b)} w_1\mathbf{r_1}+\eta_{w_2}  \mathbf{r_1^\top}\nabla^2_{(h, b)} w_2\mathbf{r_1}, \vspace{0.2 cm}\\
 \mathbf{r_2^\top} \nabla^2_{(h, b)} \eta(w_1, w_2)\mathbf{r_2}=36\alpha p^2\eta_{w_{2}w_2}+\eta_{w_1}\mathbf{r_2^\top}\nabla^2_{(h, b)} w_1\mathbf{r_2}+\eta_{w_2}  \mathbf{r_2^\top}\nabla^2_{(h, b)} w_2\mathbf{r_2}.
 \end{aligned}
\end{equation}
A tedious but straightforward calculation then leads to 
\begin{align}\label{hessian_identity_new}
 \mathbf{r_1^\top}\nabla^2_{(h, b)} \eta(w_1, w_2)\mathbf{r_1}&=2w_1(2w_1\Psi''(w_1)+\Psi'(w_1)), \vspace{0.2 cm}\\
 \mathbf{r_2^\top} \nabla^2_{(h, b)} \eta(w_1, w_2)\mathbf{r_2}&=9\alpha^2\sqrt{w_1}\left(4p^2\Theta''(p)+4p\Theta'(p)-\Theta(p)-2w_1\Psi'(w_1)\right).
\end{align}
Clearly, in view of positivity of $w_1$ and $w_2$ in $\mathcal{U}^{\circ}$, a sufficient condition for the positivity of $\mathbf{r_1^\top} \nabla^2_{(h, b)} \eta(w_1, w_2)\mathbf{r_1}$ and $\mathbf{r_2^\top}\nabla^2_{(h, b)} \eta(w_1, w_2)\mathbf{r_2}$ is 
\[4p^2\Theta''(p)+4p\Theta'(p)-\Theta(p)=0,~ \Psi'(w_1)<0,  ~\Psi''(w_1)>0, ~2w_1\Psi''(w_1)+\Psi'(w_1)>0.\]
This concludes the proof of the theorem.
\end{proof}
\begin{remark}
The simplest choice for the functions $\Psi, \Theta$ in Theorem \eqref{entropy_theorem} is $\Psi(w_1)=\dfrac{1}{3w_1}$ and $\Theta(p)={\sqrt{\dfrac{3}{p}}}$. This gives the strictly convex entropy as 
\[\eta(h, b)=h+\dfrac{1}{3\alpha hb+\kappa h^2}.\]
Actually, any function of the form $\Psi(w_1)=(w_1)^{-n}, n>0$ leads to a strictly convex entropy.
\end{remark}
\begin{proof}[Proof of Theorem \ref{local_wellposedness}]
In view of the strictly convex entropies, system \eqref{eq: main_system} is a Friedrichs symmetrizable system and thus local well-posedness of smooth solutions in $\mathcal{U}^{\circ}$ can be ensured using the classical theory of hyperbolic conservation laws (see e.g. \cite{dafermos2005hyperbolic, MR390516, majda2012compressible, serre1999systems} and references cited therein). Theorem \ref{local_wellposedness} is proved. 
\end{proof}
\section{The Riemann problem}\label{Sec: Riemann}
In this section, we first compute the elementary wave curves for the system \eqref{eq: main_system}. Based on these wave curves and a clear entropy structure, we find explicit intermediate states for the Riemann problem \eqref{eq: main_system}-\eqref{Riemann_data}.
\subsection{Elementary classical waves of system \eqref{eq: main_system} for $(h, b)^\top\in \mathcal{U}^{\circ}$}
For a given left state $(h_l, b_l)$ in the interior of the state space \eqref{state_space}, through a tedious but routine calculation, one can easily find the state set of $(h, b)$ which can be linked to $(h_l, b_l)$ by a contact
discontinuity $J(h_l, b_l)$, a rarefaction wave $R(h_l, b_l)$, or a shock wave $S(h_l, b_l)$. We describe these waves explicitly in this section.
\subsubsection{Rarefaction waves.}
Since the $2^\mathrm{nd}$ characteristic field is genuinely nonlinear, the wave associated with this field is either a rarefaction or a shock wave. Moreover, the 2-Riemann invariant remains constant across the fans of 2-rarefaction wave. This then leads to the expression of the 2-rarefaction wave given by
\begin{align}\label{eq: 4.2R} R_2:=
    \begin{cases}
        \dfrac{dx}{dt}=\lambda_2=3\alpha hb+\kappa h^2,\vspace{0.2 cm}\\
            \dfrac{b}{h}=\dfrac{b_l}{h_l}, \vspace{0.2 cm}\\
            h_l\leq h,~ b_l\leq b.
    \end{cases}
\end{align}
Clearly, $R_2$ is a curve in  the $(h, b)$-space emanating from $\mathbf{U}_l=(h_l, b_l)^\top$.
\subsubsection{Discontinuous solutions: shock and contact waves.}
For a given left state $\mathbf{U}_l=(h_l, b_l)^\top$, the discontinuous solutions (shock or contact discontinuity) with speed $\sigma\in \mathbb{R}$ are defined by
\begin{align} \label{wave}
    \mathbf{U} (x, t)=\begin{cases}
        \mathbf{U}_l=(h_l, b_l)^\top\in {\mathcal U}&:~x<\sigma t,\\
        \mathbf{U}_r=(h_r, b_r)^\top \in {\mathcal U}&:~x>\sigma t.
    \end{cases}
\end{align}
Moreover, \eqref{wave} is a weak solution of \eqref{eq: main_system} provided the Rankine-Hugoniot (R-H) conditions 
\begin{align}\label{RH}
\sigma[\![\mathbf{U}]\!]  =[\![\mathbf{F(U)}]\!]
\end{align}
hold. In \eqref{RH},  $[\![\mathbf{V}]\!]=\mathbf{V}_r-\mathbf{V}_l$ denotes the jump in $\mathbf{V}$ for $\mathbf{V}_l, \mathbf{V}_r\in \mathbb{R}^2$.

In view of the system \eqref{eq: main_system}, the R-H relations then are
\begin{equation}\label{eq: 4.8}
    \begin{array}{rclrcl}
    \sigma[\![h]\!]\!\!\!\!&=&\!\!\!\!\Big[\!\!\Big[h\left(\alpha hb+\dfrac{\kappa h^2}{3}\right)\Big]\!\!\Big],\vspace{0.2 cm}\\
    \sigma[\![b]\!]\!\!\!\!&=&\!\!\!\!\Big[\!\!\Big[b\left(\alpha hb+\dfrac{\kappa h^2}{3}\right)\Big]\!\!\Big].
    \end{array}
\end{equation}
For nonconstant solutions and a given left state $\mathbf{U_l}=(h_l, b_l)^{\top}\in   \mathcal{U}^{\circ}$, the discontinuity curves associated with the first characteristic field are contact discontinuity curves through $\mathbf{U_l}$ in the $(h, b)$-space, which are given by
\begin{align}\label{contact_relations}
    J_1:=\begin{cases}
    \mu_1((h_l, b_l);(h, b))=h\left(\alpha b+\dfrac{\kappa h}{3}\right) = h_l\left(\alpha b_l+\dfrac{\kappa h_l}{3}\right),\\
    h\neq h_l, \quad b\neq b_l.
    \end{cases}
\end{align}
The corresponding $1$-contact wave \eqref{wave} is an entropy solution of \eqref{eq: main_system} by definition.

The $2$nd characteristic field is genuinely nonlinear and thus the associated discontinuous solution \eqref{wave} is a 2-shock wave, which satisfies the R-H condition \eqref{RH} and is entropy admissible. For the left state $\mathbf{U_l}$, 2-shock can be expressed as 
\begin{align}\label{shock_relations}
   S_2:=\begin{cases} \sigma_2((h_l, b_l);(h, b))= (h+h_l)\left(\alpha b+\dfrac{\kappa h}{3}\right)+h_l\left(\alpha b_l+\dfrac{\kappa h_l}{3}\right),\\ 
   \dfrac{b}{h}=\dfrac{b_l}{h_l},
   \\
   h<h_l,\quad b<b_l.\end{cases}
\end{align}
Since the system \eqref{eq: main_system} is strictly hyperbolic in $\mathcal{U}^{\circ}$, the entropy admissibility of the shock $S_2$ with respect to the entropy/entropy-flux pair is equivalent to the Lax entropy conditions. For the $2$-shock  wave with the right state $\mathbf{U}$, Lax entropy conditions are given by
\begin{align}\label{eq: 25}
    \lambda_2(\mathbf{U})<\sigma_2<\lambda_2(\mathbf{U_l}),\,
 \lambda_1(\mathbf{U_l})  < \sigma_2.
\end{align}
This results in the last inequality in \eqref{shock_relations}.  
\begin{remark}
Note that the shock curve and rarefaction curve associated with the $2^{nd}$-characteristic field are the same and form a straight line in the phase space. This shows that the Riemann invariant $w_2=b/h$ is associated with a Temple field \cite{temple1983systems}. However, it is easy to show that the Riemann invariant $w_1=\alpha hb+\kappa h^2/3$ is not associated with a Temple field as it violates the condition of Lemma 4.2 in \cite{MR892021}, if we use a viscous approximation of \eqref{eq: main_system}. This indicates that the system \eqref{eq: main_system} is not a Temple system completely. However, we can still show that the solution of the Riemann problem can be developed for any initial Riemann data \eqref{Riemann_data} satisfying the assumption (\assref). 
\end{remark}
\begin{remark}
For the case $h_r\rightarrow 0$, i.e., when the right state lies on the boundary of the state space, the speed of shock and contact discontinuity coincide and merge to become an overcompressive delta shock as we see in the next section.
\end{remark}

\subsection{Proof of Theorem \ref{theorem:Riemann}: Solution of the Riemann problem}\label{sec: Riemann_solutions_construction}
In this section, we explicitly calculate the intermediate states to prove that for all choices of initial Riemann data in $\mathcal{U}$, a unique solution of the Riemann problem \eqref{eq: main_system}-\eqref{Riemann_data} exists and remains in $\mathcal{U}$. Depending on initial data in $\mathcal{U}$, there are several possible situations for the solution of \eqref{eq: main_system}-\eqref{Riemann_data}, which are described below.
\subsubsection{Solution in the interior of the state space.}
We first develop the unique Riemann solutions for the case when the Riemann data \eqref{Riemann_data} lies in $\mathcal{U}^{\circ}$. Since the shock curve and rarefaction curve overlap with each other, the intermediate state between a contact discontinuity ($J$) and shock/rarefaction ($S/R$) can be obtained for any given Riemann data \eqref{Riemann_data} in a unique manner. Precisely, assume that the intermediate state between $J$ and $S/R$ is $(h_*, b_*)$, see \ref{fig: J+S}-\ref{fig: J+R}. Then for consistency, $(h_*, b_*)$ must satisfy
\begin{align*}
    \alpha h_-b_-+\dfrac{\kappa h_-^2}{3}=\alpha h_*b_*+\dfrac{\kappa h_*^2}{3},\quad \dfrac{b_*}{h_*}=\dfrac{b_+}{h_+}.
\end{align*}
Thus $(h_*, b_*)$ can be obtained uniquely in the interior of the state space and is given by
\begin{align}\label{intermediate}
 (h_*, b_*)=   \left(\sqrt{\dfrac{h_-h_+(3\alpha b_-+\kappa h_-)}{3\alpha b_++\kappa h_+}}, b_+\sqrt{\dfrac{h_- (3\alpha b_-+\kappa h_-)}{h_+(3\alpha b_++\kappa h_+)}}\right)\in \mathcal{U}.
\end{align}
\begin{figure}
    \centering
    \begin{subfigure}{0.48\linewidth}
\includegraphics[width=\linewidth]{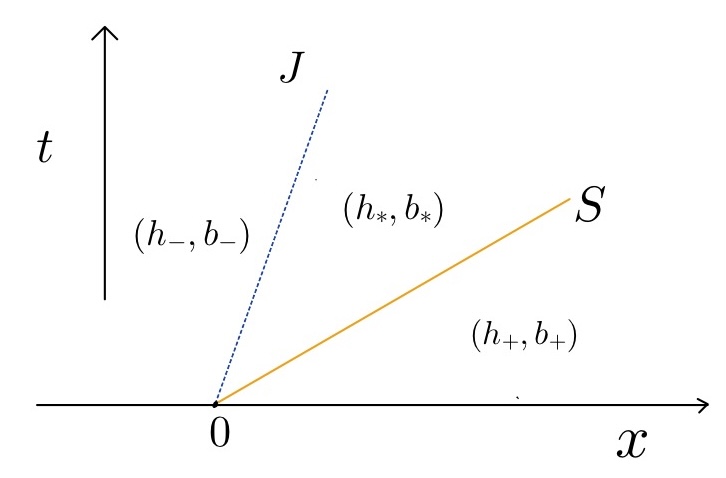}
    \caption{Solution of the Riemann problem for $0<h_-b_-<h_+b_+$ $(J+S)$.}
    \label{fig: J+S}
    \end{subfigure}\hfill
    \begin{subfigure}{0.48\linewidth}
        \includegraphics[width=\linewidth]{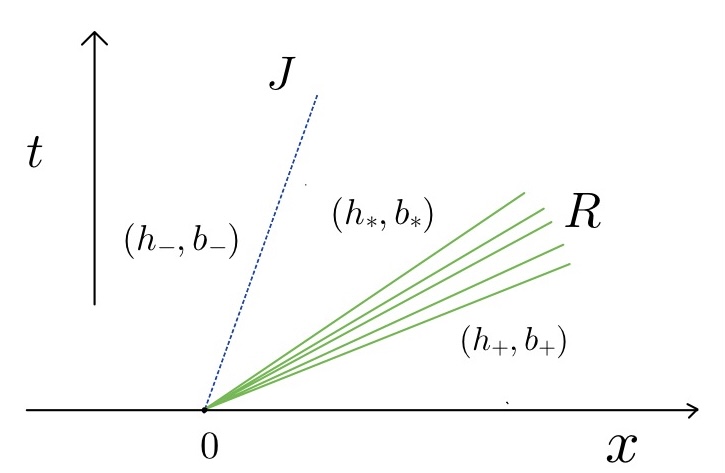}
    \caption{Solution of the Riemann problem for $0<h_+b_+<h_-b_-$ $(J+R)$.}
    \label{fig: J+R}
    \end{subfigure}
\end{figure}

In view of the explicit intermediate state, we can describe the Riemann solutions as follows:\vspace{0.2 cm}\\
\textbf{Case I: $h_\pm, b_\pm\in \mathcal{U}^{\circ}, ~0<h_-b_-<h_+b_+$.}\\
For this case, the Riemann solution consists of a contact discontinuity $J$ followed by a rarefaction wave $R$. Precisely, the solution structure is given by


\begin{align}\label{J+R}
\left(h, b\right)(x, t)= \left\{
        \begin{array}{llll}
            (h_-, b_-),& -\infty<x<\mu_1((h_-, b_-); (h_*, b_*))t,\vspace{0.2 cm}\\
            \left(h_*, b_*\right),&\mu_1((h_-, b_-); (h_*, b_*))t<{x}\leq \lambda_2(h_*, b_*)t,\vspace{0.2 cm}\\
            (h, b)(x/t),&\lambda_2(h_*, b_*)t\leq x\leq \lambda_2(h_+, b_+)t,\vspace{0.2 cm}\\
            (h_+, b_+),&\lambda_2(h_+, b_+)t\leq x<+\infty,\\
        \end{array}
    \right.
\end{align}
where the state $(h, b)$ inside the rarefaction wave $R$ can be determined uniquely and is given by
\begin{align}\label{rarefaction}
    (h, b)(x/t)=\left(\sqrt{\dfrac{x h_+}{t(3\alpha b_++\kappa h_+)}},b_+\sqrt{\dfrac{x}{h_+t(3\alpha b_++\kappa h_+)}} \right).
\end{align}\\
\textbf{Case II: $h_\pm, b_\pm\in \mathcal{U}^{\circ}, ~0<h_+b_+<h_-b_-$.}\\
For this case, the Riemann solution consists of a contact discontinuity $J$ followed by a shock wave $S$. Therefore, the solution structure is given by
\begin{align}\label{J+S}
\left(h, b\right)(x, t)= \left\{
        \begin{array}{lll}
            (h_-, b_-),& -\infty<x<\mu_1((h_-, b_-); (h_*, b_*)){t},\\
            (h_*, b_*),&\mu_1((h_-, b_-); (h_*, b_*)){t}<x<\sigma_2((h_*, b_*); (h_+, b_+)){t},\\
            (h_+, b_+), & \sigma_2((h_*, b_*); (h_+, b_+)){t}<x<+\infty,
         \end{array}
    \right.
\end{align}
where the intermediate state $(h_*, b_*)$ is defined in \eqref{intermediate}.
\subsubsection{Solution of the Riemann problem at the boundary of state space \eqref{state_space}.}\label{sec: Riemann_solution_at_boundary} We turn to the special cases in which the Riemann data \eqref{Riemann_data} satisfy assumption (\assref). With this additional condition, one can construct unique solutions to the Riemann problem \eqref{eq: main_system}-\eqref{Riemann_data}, even when the system \eqref{eq: main_system} is no longer strictly hyperbolic.\newline\\

\textbf{Case III: $h_-\in \partial \mathcal{U}$, $h_+, b_\pm\in \mathcal{U}^{\circ}$.}\\
If $h_-=0$, then the left state is perturbed slightly to $(h_-, b_-)$ similar to \cite{liu1980vacuum}, where $h_-$ is a sufficiently small positive number, which obeys $0<h_-<h_+$ and $0<b_-<b_+$. In this case, the solution of the Riemann problem is $J+R$ given in \eqref{J+R}. Also, it is easy to see that
\begin{align}
     \underset{h_-\rightarrow 0} {\lim}\mu_1(h_-, b_-)=\underset{h_-\rightarrow 0} {\lim} \lambda_2\left(\sqrt{\dfrac{h_-h_+(3\alpha b_-+\kappa h_-)}{3\alpha b_++\kappa h_+}}, b_+\sqrt{\dfrac{h_- (3\alpha b_-+\kappa h_-)}{h_+(3\alpha b_++\kappa h_+)}}\right)=0.
\end{align}
This implies that for the limit $h_-\rightarrow 0$, both the wave $J$ and the wave back of $R$ align together to form a composite wave $JR$ at the line $x=0$ in the $(x, t)$-plane. The solution in this case is thus given by
\begin{align}\label{composite_wave}
    (h, b)(x, t)=\begin{cases}
        (0, b_-), & -\infty<x<0,\\
        (h, b)(x/t), & 0\leq x\leq \lambda_2(h_+, b_+)t,\\
        (h_+, b_+), & \lambda_2(h_+, b_+)t<x<+\infty,
    \end{cases}
\end{align}
where the state $(h, b)(x/t)$ in $R$ is defined in \eqref{rarefaction}.\newline\\
\textbf{Case IV: $h_+\in \partial \mathcal{U}$, $h_-, b_\pm\in \mathcal{U}^{\circ}$.}\\
The reverse case of $h_+=0$ leads to a more interesting phenomenon. In this case
\begin{align}
     \underset{h_+\rightarrow 0} {\lim}\mu_1(h_-, b_-)=\underset{h_+\rightarrow 0} {\lim}\mu_1(h_*, b_*)=\underset{h_+\rightarrow 0} {\lim} \sigma_2=\alpha h_* b_*+\dfrac{\kappa h_*^2}{3}.
\end{align}
This shows that the two waves tend to the line $x=\mu_1(h_-, b_-)t$ of the contact discontinuity $J$ in the $(x, t)$-plane. Also, the intermediate state in this case satisfies
\begin{align}
    \underset{h_+\rightarrow 0} {\lim}(h_*, b_*)= \underset{h_+\rightarrow 0} {\lim} \left(\sqrt{\dfrac{h_-h_+(3\alpha b_-+\kappa h_-)}{3\alpha b_++\kappa h_+}}, b_+\sqrt{\dfrac{h_- (3\alpha b_-+\kappa h_-)}{h_+(3\alpha b_++\kappa h_+)}}\right)=(0, +\infty).
\end{align}
This motivates us to define a weighted $\delta$-measure solution (i.e., $\delta$-shock) supported on a discontinuity curve $\Gamma=\{(x(s), t(s)): ~a<x<b\}$. Following \cite{chen2003formation, keyfitz1995spaces, sheng1999Riemann, tan1994delta}, we consider a piecewise smooth Dirac valued solution of \eqref{eq: main_system}-\eqref{Riemann_data} satisfying the generalized Rankine-Hugoniot jump conditions
\begin{align}\label{GRH}
   \left\{
        \begin{array}{lll}
            \dfrac{dx}{dt}&= \sigma_{\delta},\\
            \sigma_{\delta}[\![h]\!]&=\Big[\!\!\Big[h\left(\alpha hb+\dfrac{\kappa h^2}{3}\right)\Big]\!\!\Big],\\
            \dfrac{d\beta(t)}{dt}&=\bigg(\sigma_{\delta}[\![b]\!]-\Big[\!\!\Big[b\left(\alpha hb+\dfrac{\kappa h^2}{3}\right)\!\Big]\!\!\Big]\bigg),
            \end{array}
    \right.
\end{align}
where $\sigma_\delta$ and $\beta(t)$ denote the speed and the strength of Dirac-valued shock wave, respectively. The expression of speed and strength of the $\delta$-shock wave can be obtained easily from the generalized Rankine-Hugoniot conditions and is given by
\begin{align}\label{strength_delta}
\sigma_{\delta}=\alpha h_- b_-+\dfrac{\kappa h_-^2}{3}, \quad \beta(t)=b_+\left(\alpha h_- b_-+\dfrac{\kappa h_-^2}{3}\right)t, \quad t\geq 0.
\end{align}
Moreover, for uniqueness, the generalized entropy conditions or overcompressibility conditions need to be imposed. They are given by
\begin{align}\label{eq: GEC}
0=\lambda_1(h_+, b_+)<\sigma_{\delta}=\lambda_1(h_-, b_-)<\lambda_2(h_-, b_-)=3\alpha h_-b_-+\kappa h_-^2,
\end{align}
which implies that all the characteristics on both sides of the $\delta$-shock are not outgoing. It is easy to see that, for all $b_\pm \in \mathcal{U}$, the generalized entropy conditions imply $0=h_+<h_-$. 

To summarize this case, we define the delta shock wave solution in the following lemma.
\begin{lemma}
For $h_-, b_\pm \in \mathcal{U}^{\circ}$ and $h_+\in \partial \mathcal{U}$, the solution of the Riemann problem \eqref{eq: main_system}-\eqref{Riemann_data} is a $\delta$-shock, which can be expressed as follows
\begin{align}\label{pre5}
\left(h, b\right)(x, t)= \left\{
        \begin{array}{lll}
            (h_-, b_-),& x<\sigma_{\delta}{t},\\
            (0,\beta(t)\delta(x-\sigma_{\delta}{t})),& x=\sigma_{\delta}{t},\\
            (0, b_+),& x>\sigma_{\delta}{t},\\
         \end{array}
    \right.
\end{align}
where $\sigma_\delta$ and $\beta(t)$ are defined in \eqref{strength_delta}.

Moreover, for any given test function $\varphi\in C_0^\infty(\mathbb{R}\times \mathbb{R}_+)$, the weighted $\delta$-measure $\beta(s)\delta_{\Gamma}$ supported on a smooth curve $\Gamma$ is defined by
\begin{align}
    \langle \beta(\cdot)\delta_{\Gamma}, \varphi(\cdot, \cdot)\rangle=\displaystyle \int_{a}^{b} \beta(s)\varphi(x(s), t(s))\sqrt{x'(s)^2+t'(s)^2}~ ds.
\end{align}
\end{lemma}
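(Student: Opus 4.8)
The plan is threefold: (i) verify that the pair defined in \eqref{pre5} satisfies \eqref{eq: main_system}--\eqref{Riemann_data} in the distributional sense, which simultaneously \emph{produces} the expressions in \eqref{strength_delta}; (ii) check the overcompressibility conditions \eqref{eq: GEC}; and (iii) argue that no classical wave configuration is available for $h_+=0$, so that the generalized Rankine--Hugoniot relations \eqref{GRH} together with $\beta(0)=0$ determine the solution uniquely.

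For step (i) I would insert the ansatz into the weak formulations
\[
\iint \big(h\,\varphi_t + \big(\alpha h^2 b + \tfrac{\kappa h^3}{3}\big)\varphi_x\big)\,dx\,dt = 0, \qquad
\iint \big(b\,\varphi_t + \big(\alpha h b^2 + \tfrac{\kappa h^2 b}{3}\big)\varphi_x\big)\,dx\,dt = 0,
\]
tested against $\varphi\in C_0^\infty(\mathbb{R}\times\mathbb{R}_+)$, where in the second identity $b$ means its $L^\infty$ part plus the singular term $\beta(t)\delta_\Gamma$ acting through the pairing recorded in the lemma. Because the $h$-component carries no concentrated mass, the first identity reduces to the ordinary jump relation $\sigma_\delta[\![h]\!] = [\![h(\alpha h b + \tfrac{\kappa h^2}{3})]\!]$; using the flux identity $\alpha h^2 b + \kappa h^3/3 = h\lambda_1(h,b)$ and $h_+=0$ this yields immediately $\sigma_\delta = \lambda_1(h_-,b_-) = \alpha h_- b_- + \kappa h_-^2/3$. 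In the second identity the only singular contribution is the transport of $\beta$ along $\Gamma = \{x=\sigma_\delta t\}$; since the Riemann problem is self-similar, $\Gamma$ is a ray through the origin, so the weight $\sqrt{x'(s)^2 + t'(s)^2}$ in the pairing is constant and integrating by parts along $\Gamma$ gives exactly the third line of \eqref{GRH}, i.e. $\dot\beta(t) = \sigma_\delta[\![b]\!] - [\![b(\alpha h b + \tfrac{\kappa h^2}{3})]\!]$. Evaluating the jumps with $h_+=0$ and the flux identity $\alpha h b^2 + \kappa h^2 b/3 = b\lambda_1(h,b)$ collapses this to $\dot\beta(t)=\sigma_\delta b_+$, and integrating from $t=0$ with $\beta(0)=0$ produces $\beta(t) = b_+\big(\alpha h_- b_- + \tfrac{\kappa h_-^2}{3}\big)t$, as claimed in \eqref{strength_delta}.

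For step (ii), since $h_-,b_->0$ the eigenvalues in \eqref{eigensystem} satisfy $\sigma_\delta = \lambda_1(h_-,b_-) > 0$ and $\lambda_2(h_-,b_-) = 3\alpha h_- b_- + \kappa h_-^2 = 3\sigma_\delta > \sigma_\delta$, while $h_+=0$ forces $\lambda_1(h_+,b_+)=\lambda_2(h_+,b_+)=0<\sigma_\delta$; hence every characteristic on both sides impinges on $\Gamma$, which is precisely \eqref{eq: GEC}. For step (iii), the only classical candidate connecting an interior left state to a state with $h_+=0$ would be a contact $J$ followed by a shock or rarefaction, with intermediate state \eqref{intermediate}; but that state tends to $(0,+\infty)\notin\mathcal U$ as $h_+\to 0$, so no finite-state classical solution exists and a concentration is forced. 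Since \eqref{GRH} consists of one algebraic relation fixing $\sigma_\delta$ and one linear ODE for $\beta$ with the prescribed datum $\beta(0)=0$, its solution is unique and equals \eqref{strength_delta}; therefore \eqref{pre5} is the unique admissible solution.

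The part demanding the most care is the distributional bookkeeping in the $b$-equation: one must split $\mathbb{R}\times\mathbb{R}_+$ along $\Gamma$, integrate by parts in each half, and combine the action of $\beta(t)\delta_\Gamma$ with the nonlinear flux evaluated on a $b$ that is bounded off $\Gamma$ and singular on $\Gamma$. Following the standard references cited before \eqref{GRH}, one adopts the convention that products of the Dirac part of $b$ with the continuous factor $h$ are handled so that only the convective transport of $\beta$ survives as a singular term; the pairing $\langle\beta(\cdot)\delta_\Gamma,\varphi\rangle$ stated in the lemma is exactly what makes this manipulation unambiguous, and once it is fixed the reduction to \eqref{GRH} and the elementary integration above finish the proof.
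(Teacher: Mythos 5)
Your proposal is correct and follows essentially the same route as the paper: the speed $\sigma_\delta$ and strength $\beta(t)$ are obtained from the generalized Rankine--Hugoniot relations \eqref{GRH} with $h_+=0$, the overcompressibility conditions \eqref{eq: GEC} are checked exactly as you do (noting $\sigma_\delta=\lambda_1(h_-,b_-)$, so the left $1$-characteristics are tangent rather than strictly incoming), and the lemma itself is established by verifying the weak formulation of the $b$-equation with the weighted Dirac pairing, which the paper states reduces to the identity \eqref{eq: delta} and defers in detail to the cited references. Your additional necessity argument (the intermediate state \eqref{intermediate} tending to $(0,+\infty)$ as $h_+\to 0$) is precisely the paper's Case IV discussion preceding the lemma, so nothing in your plan departs from the paper's reasoning.
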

\begin{proof}
In order to prove this theorem, it is enough to prove that the function \eqref{pre5} satisfies
\begin{align}\label{eq: delta}
    \langle b, \varphi_t\rangle+\left\langle b\left(\alpha hb+\dfrac{\kappa h^2}{3}\right), \varphi_x \right\rangle=0
\end{align}
for any $\varphi\in C_0^\infty (\mathbb{R}\times \mathbb{R}_+)$, where
\begin{align*}
\langle b, \varphi_t\rangle&=\displaystyle\int_{\mathbb{R}_+}\int_{\mathbb{R}} b_0 \varphi dx dt+\langle\beta(t)\delta_{\Gamma}, \varphi\rangle,\\
\left\langle b\left(\alpha hb+\dfrac{\kappa h^2}{3}\right), \varphi_x\right\rangle&=\displaystyle\int_{\mathbb{R}_+}\int_{\mathbb{R}}b_0\left(\alpha h_0b_0+\dfrac{\kappa h_0^2}{3}\right)\varphi~ dx~ dt+\langle \sigma_{\delta}\beta(t)\delta_{\Gamma}, \varphi\rangle.
\end{align*}

The proof of equality \eqref{eq: delta} is straightforward and follows the same arguments as in \cite{sen2020delta} and \cite{shen2018delta}. For this reason, we donot present it here.
\end{proof}
\begin{remark}
If either $b_-$ or $b_+$ vanishes, the solution structure remains the same as in the cases in the interior of the state space, and therefore, no separate discussion of these cases is required.   
\end{remark}
\begin{remark}
The inverse relation between $h$ and $b$ in \eqref{intermediate} indicates that the rupture of film $h_*\rightarrow 0$ is possible in the Riemann solutions when $b_+\rightarrow \infty$. This also indicates that the delta shock waves and composite waves are physical, especially in such situations.  
\end{remark}
\subsection{Stability of the Riemann solutions with vanishing surface tension and gravity parameters}
In this section, we prove Theorem \ref{vanishing_theorem}. In particular, we analyze the behaviour of Riemann solutions from Section \ref{sec: Riemann_solutions_construction} as the parameters $\alpha$ and $\kappa$ vanish, respectively. For the sake of brevity, we focus on the first part of Theorem \ref{vanishing_theorem} ($\kappa \rightarrow 0$) as the second part of Theorem \ref{vanishing_theorem} ($\alpha\rightarrow 0$) is similar due to the linearity of the expressions of intermediate states, shock speeds, rarefaction states and the delta shock. \vspace{-0.1 cm}
\subsubsection{Riemann solutions of the system \eqref{eq: main_system}-\eqref{Riemann_data} as $\kappa\rightarrow 0$ and $\alpha=1/2$.}
In this section, we prove that the constructed Riemann solutions of \eqref{eq: main_system}-\eqref{Riemann_data} converge to the solution of the Riemann problem for \eqref{thin-film}-\eqref{Riemann_data} when the gravity parameter $\kappa\rightarrow 0$. This then proves the first part of Theorem \ref{vanishing_theorem}.\vspace{0.1 cm}\newline\\
\textbf{Case I: $h_\pm, b_\pm\in \mathcal{U}^{\circ}, ~0<h_-b_-<h_+b_+$.}\\
The Riemann solution for the system \eqref{eq: main_system} in this case is given by \eqref{J+R} with the intermediate state $(h_*, b_*)$ defined in \eqref{intermediate} and variable state $(h, b)$ inside rarefaction by \eqref{rarefaction}. Thus we directly take the limit $\kappa \rightarrow 0$ and put $\alpha=1/2$ to obtain the limit of the solution \eqref{J+R} as 
\begin{align}
    \underset{\kappa \rightarrow 0}{\lim} (h, b)(x, t)=\left\{
        \begin{array}{llll}
            (h_-, b_-),& -\infty<x<\dfrac{h_-b_- t}{2},\vspace{0.1 cm}\\
            \left(\sqrt{\dfrac{h_-h_+b_-}{b_+}}, \sqrt{\dfrac{h_-b_-b_+}{h_+}} \right),&\dfrac{h_-b_- t}{2}<{x}\leq \dfrac{3h_-b_- t}{2},\vspace{0.1 cm}\\
            \left(\sqrt{\dfrac{2x h_+}{3 b_+ t}},\sqrt{\dfrac{2xb_+}{3 h_+t }} \right),&\dfrac{3h_-b_- t}{2}\leq x\leq \dfrac{3h_+b_+ t}{2},\vspace{0.1 cm}\\
            (h_+, b_+),&\dfrac{3h_+b_+ t}{2}\leq x<+\infty,\\
        \end{array}
    \right.
\end{align}
which is exactly the Riemann solution of \eqref{thin-film}-\eqref{Riemann_data} for this choice of initial data (see \cite{sekhar2019stability}). \vspace{0.2 cm}\\
\textbf{Case II: $h_\pm, b_\pm\in \mathcal{U}^{\circ}, ~0<h_+b_+<h_-b_-$.}\\
We directly pass the limit $\kappa\rightarrow 0$ in \eqref{J+S} and obtain
\begin{align}
    \underset{\kappa \rightarrow 0}{\lim} (h, b)(x, t)=\left\{
        \begin{array}{llll}
            (h_-, b_-),& -\infty<x<\dfrac{h_-b_- t}{2},\vspace{0.1 cm}\\
            \left(\sqrt{\dfrac{h_-h_+b_-}{b_+}}, \sqrt{\dfrac{h_-b_-b_+}{h_+}} \right),&\dfrac{h_-b_- t}{2}<{x}\leq \dfrac{(h_+b_++h_-b_-+h_-b_+)t}{2},\vspace{0.1 cm}\\
            (h_+, b_+),&\dfrac{(h_+b_++h_-b_-+h_-b_+)t}{2}<x<+\infty,\\
        \end{array}
    \right.
\end{align}
which is exactly the Riemann solution of \eqref{thin-film}-\eqref{Riemann_data} for this choice of initial data (see \cite{sekhar2019stability}). \vspace{0.2 cm}\\
\textbf{Case III: $h_-\in \partial \mathcal{U}, h_+, b_\pm \in \mathcal{U}^{\circ}$.}\\
Given its equivalence to the first case, no further discussion is required for this case.\vspace{0.2 cm}\\
\textbf{Case IV: $h_+\in \partial \mathcal{U}$, $h_-, b_\pm\in \mathcal{U}^{\circ}$.}\\
For this case, let us denote the delta shock solution \eqref{pre5} as
\begin{align}
    \mathbf{U}^\kappa=(h_-, b_-)\chi_{\{x<\sigma_{\delta}t\}}+(h_+, b_+)\chi_{\{x<\sigma_{\delta}t\}}+\beta(t)\delta(x-\sigma_{\delta}t)\in \mathcal{M}_{Loc}(\mathbb{R};\mathbb{R}^2),
\end{align}
where $\chi_A$ is the characteristic function to the set $A$.

In view of the continuity of $\sigma_{\delta}$ and $\beta(t)$, we directly pass the limit $\kappa \rightarrow 0$ and $\alpha=1/2$ in \eqref{strength_delta} and \eqref{pre5} such that 
\begin{align}\label{sigma_limit}
    \underset{\kappa \rightarrow 0}{\lim} \sigma_{\delta}=\frac{h_-b_-}{2}, \quad \underset{\kappa \rightarrow 0}{\lim} \beta(t)=b_+\frac{h_-b_-t}{2}.
\end{align}
Moreover, we denote the delta shock solution of \eqref{thin-film}-\eqref{Riemann_data} as follows (see \cite{sen2020delta})
\begin{align*}
    \mathbf{U}^0=(h_-, b_-)\chi_{\{x<\frac{h_-b_-t}{2}\}}+(h_+, b_+)\chi_{\{x<\frac{h_-b_-t}{2}\}}+b_+\frac{h_-b_-t}{2}\delta\left(x-\frac{h_-b_-t}{2}\right)\in \mathcal{M}_{Loc}(\mathbb{R};\mathbb{R}^2).
\end{align*}
Thus for any $\varphi\in C_{c}^{\infty}(\mathbb{R}\times \mathbb{R}_+)$, we directly compute the limit
\begin{align}\label{limit}
    \underset{\kappa\rightarrow 0}{\lim}\langle\mathbf{U}^\kappa-\mathbf{U}^0, \varphi\rangle&=\underset{\kappa\rightarrow 0}{\lim}\displaystyle \int_{0}^{T}\int_{\mathbb{R}} \varphi\left(\left(\mathbf{U}_+-\mathbf{U}_-\right)\chi_{\left(\sigma_{\delta}t-\frac{h_-b_-t}{2}\right)}\right)\,dx\,dt\nonumber\\
    &\hspace{1 cm}+\underset{\kappa\rightarrow 0}{\lim}\displaystyle \int_{0}^{T}\int_{\mathbb{R}}\left(\varphi(\sigma_{\delta}t)\beta(t)\gamma^\kappa-\varphi\left(\frac{h_-b_-t}{2}\right)\frac{b_+h_-b_-t}{2}\gamma^0\right)\,dx\,dt,\nonumber\\
    &=\underset{\kappa\rightarrow 0}{\lim}(I_1+I_2),
\end{align}
where $\mathbf{U}_\pm=(h_\pm, b_\pm)$, $\gamma^\kappa=\sqrt{1+(\sigma_{\delta}t)^2}$ and $\gamma^0=\sqrt{1+\left(\frac{h_-b_-t}{2}\right)^2}$.

Now for $I_1$, we have the estimate
\begin{align}
   |I_1| \le \lVert \varphi\rVert_{\infty}\, \bigl|\mathbf{U}_{+}-\mathbf{U}_{-}\bigr|\, \left|\sigma_{\delta}-\frac{h_{-}b_{-}}{2}\right|\, t \le \lVert \varphi\rVert_{\infty}\, \bigl|\mathbf{U}_+-\mathbf{U}_-\bigr|\, \left|\sigma_{\delta}-\frac{h_-b_-}{2}\right|\, T. \
\end{align}
Similarly, 
\begin{align}
    |I_2|&\leq \lVert \varphi\rVert_{\infty}\left( \lVert\gamma^\kappa\rVert_{\infty}\left|\beta(t)-\frac{b_+h_-b_-t}{2}\right|
    +\frac{b_+h_-b_-t}{2} |\gamma^\kappa-\gamma^0|\right)\nonumber\\
    &\hspace{2 cm}+\lVert \beta(t)\gamma^\kappa\rVert_{\infty}\lVert \varphi(\sigma_{\delta}t)-\varphi(h_-b_-t/2)\rVert_{L^1_{Loc}}.
\end{align}
Therefore, in view of \eqref{sigma_limit} and \eqref{limit} and using the continuiy of $\varphi$, we have
\begin{align*}\label{limit_new}
    \underset{\kappa\rightarrow 0}{\lim}\langle\mathbf{U}^\kappa-\mathbf{U}^0, \varphi\rangle=0, ~\forall~t\in [0, T]. 
\end{align*}
or for all $T>0$
\[\mathbf{U^\kappa}\rightarrow \mathbf{U}^0~\text{as}~\kappa \rightarrow 0~\text{in}~C([0, T]; \mathcal{M}_{Loc}(\mathbb{R}; \mathbb{R}^2)).\]
To summarize, we can identify the limit of \eqref{pre5} as
\begin{align}
      \underset{\kappa \rightarrow 0}{\lim} (h, b)(x, t)=\left\{
        \begin{array}{lll}
            (h_-, b_-),& x<\dfrac{h_-b_- t}{2},\vspace{0.1 cm}\\
            \left(0,\dfrac{b_+h_-b_-t}{2}\delta\left(x-\frac{h_-b_-}{2}{t}\right)\right),& x=\dfrac{h_-b_- t}{2},\vspace{0.1 cm}\\
            (0, b_+),& x>\dfrac{h_-b_-t}{2},\\
         \end{array}
    \right.
\end{align}
which is the delta shock solution of the Riemann problem for \eqref{thin-film}-\eqref{Riemann_data} (see \cite{sen2020delta}).

Collecting all four cases together, the first part of Theorem \ref{vanishing_theorem} is proved. The other part of Theorem \ref{vanishing_theorem} is similar and straightforward.
\begin{remark}
Case IV shows that the strength and speed of the delta shock decrease as $\kappa\rightarrow 0$. This indicates that the gravity parameter has a direct impact on the strength and speed of delta shocks. In particular, in the context of thin film flows, peaks of concentration gradient across a delta shock get higher when the gravity parameter takes a larger value. This will be revisited again in the numerical examples in Section \ref{sec: numerics}. 
\end{remark}
\section{Stability of the Riemann solutions with perturbation in initial data}\label{sec: 3}
In what follows, we develop the solutions of the Cauchy problem \eqref{eq: main_system}-\eqref{initial_data} via nonlinear wave interactions and prove Theorem \ref{perturbed_RP}. The initial data \eqref{initial_data} can be understood as two local Riemann problems located at $(-\varepsilon, 0)$ and $(\varepsilon, 0)$ and thus the solution of \eqref{eq: main_system}-\eqref{initial_data} is constructed via nonlinear wave(s) interactions. In particular, the nonlinear wave(s) coming from the first Riemann problem may interact with the wave(s) of the second Riemann problem, and a new Riemann problem is formed at each point of interaction. For each $i$th Riemann problem, the solution is denoted by either $J_i+S_i/R_i$ or $\delta S_i$, $i=1, 2, \ldots, N, ~N\geq 2$, where “$+$” denotes “followed by” a nonlinear wave. These nonlinear interactions give rise to new emerging nonlinear classical and nonclassical hyperbolic waves. We compute all these possible points and times of interaction and provide the expression of solution curves explicitly. Moreover, during the process of interaction, the strength of each delta shock wave is also computed. All these nonlinear quantities depend on the perturbation parameter $\varepsilon$ explicitly. By analyzing each of these interaction points, nonlinear wave curves and the strength of delta shocks, we
prove that the solution of \eqref{eq: main_system} and \eqref{initial_data} given by $(h_{\varepsilon}, b_{\varepsilon})(x, t)$ converges to the Riemann solution of \eqref{eq: main_system}-\eqref{Riemann_data} as $\varepsilon\to 0$.

Depending upon the different choices of initial data, there are exactly seven possibilities for the solution of the Cauchy problem \eqref{eq: main_system}-\eqref{initial_data}. The possible cases of nonlinear wave interactions are listed as follows:
\[
\begin{aligned}
&1.\; J_1 + S_1 \text{ and } J_2 + S_2,\quad
2.\; J_1 + S_1 \text{ and } J_2 + R_2,\quad
3.\; J_1 + R_1 \text{ and } J_2 + R_2,\\[0.2cm]
&4.\; J_1 + R_1 \text{ and } J_2 + S_2,\quad
5.\; \delta S_1 \text{ and } J_2 R_2,\quad
6.\; J_1 + S_1 \text{ and } \delta S_2,\quad
7.\; J_1 + R_1 \text{ and } \delta S_2.
\end{aligned}
\]
where $J_2R_2$ is the composite wave solution of the second Riemann problem defined in \eqref{composite_wave}. Moreover, the notation for e.g. $J_1+S_1$ and $J_2+R_2$ implies that the state $(h_-, b_-)$ is connected to the state $(h_m, b_m)$ by a contact discontinuity followed by a shock wave arising from the first Riemann problem while the state the $(h_m, b_m)$ is connected to the state $(h_+, b_+)$ by a contact discontinuity followed by a rarefaction wave arising from second Riemann problem.


For the sake of brevity, here we consider only the first two cases of classical wave interactions, as the other two cases can be analyzed in a similar manner. In particular, we consider the non-trivial cases of classical wave interactions where the shock/contact or delta shock may interact with the rarefaction wave. For the delta shock wave interaction, we consider all possible cases. 
\begin{remark}
In the current state space $\mathcal{U}$, due to the nonnegativity of state variables, the interaction of two delta shock waves is not possible.   
\end{remark}
\subsection{Solutions involving classical wave interactions}
In this section, we construct the solution of the Cauchy problem \eqref{eq: main_system}-\eqref{initial_data} that involves the interaction of classical elementary waves only.
\subsubsection{Case I:  $J_1+S_1$ and $J_2+S_2$.}
For this case, the initial data \eqref{initial_data} satisfy $h_\pm, b_\pm, h_m, b_m\in \mathcal{U}^{\circ}$ such that $h_-b_- > h_mb_m > h_+b_+$ or equivalently $h_->h_m>h_+$ and $b_->b_m>b_+$. Then for a small $t>0$, the solution of the perturbed Riemann problem \eqref{eq: main_system}-\eqref{initial_data} can be described as follows:
\begin{align*}
(h_-, b_-)\quad \underset{\rightarrow}{J_1} \quad(h_1, b_1)
 \quad\underset{\rightarrow}{S_1}\quad (h_m, b_m)\quad \underset{\rightarrow}{J_2}\quad (h_2, b_2)\quad \underset{\rightarrow}{S_2}\quad (h_+, b_+), 
\end{align*}
 where $(h_1, b_1)$ and $(h_2, b_2)$ are the intermediate states between the contact discontinuity and the shock of the first and second
Riemann problems, respectively (see Figure \ref{fig:shock_interaction}). These intermediate states can be explicitly calculated using \eqref{intermediate}. 
\begin{figure}
    \centering
    \includegraphics[width=0.65\linewidth]{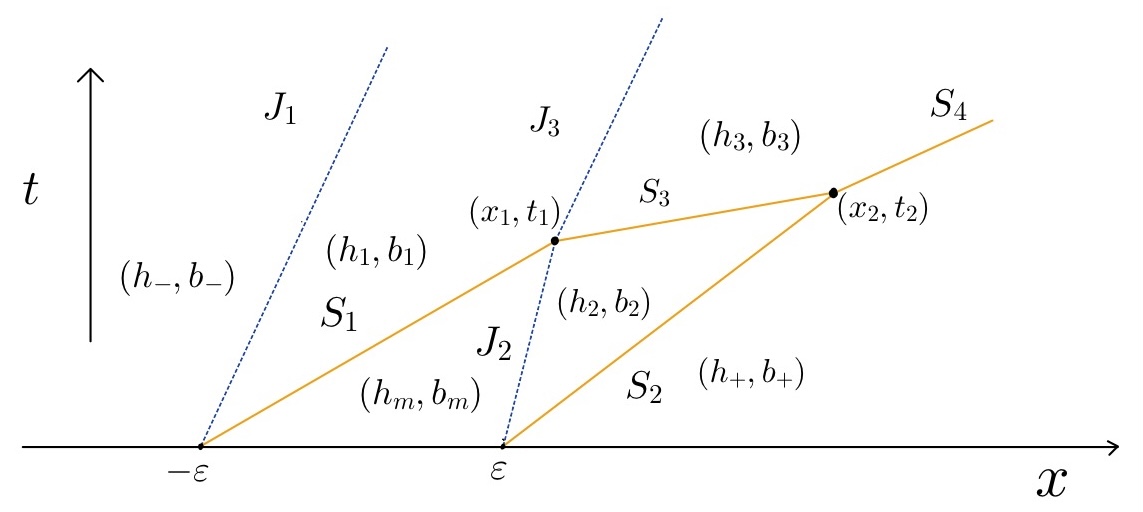}
    \caption{Solution of the perturbed Riemann problem for $h_-b_- > h_mb_m > h_+b_+$.}
    \label{fig:shock_interaction}
\end{figure}

It is easy to see that $\sigma_1>\mu_2$, where $\sigma_1$ is the speed of shock $S_1$ while $\mu_2$ is the speed of contact discontinuity $J_2$. This follows that $S_1$ interacts with $J_2$ after a finite time, say $t=t_1$. The interaction occurs at a point $(x_1, t_1)$, which is calculated by
\begin{align*}
x_1+\varepsilon= \sigma_1 t_1,\quad x_1-\varepsilon=\mu_2 t_1.
\end{align*}
From these formula, it follows that $(x_1, t_1)= \left((\sigma_1+\mu_2)\varepsilon/(\sigma_1- \mu_2), 2\varepsilon/(\sigma_1- \mu_2)\right).$

Therefore, at the interaction time $t = t_1$, the resulting configuration gives rise to a new Riemann problem. The left state is $(h_1, b_1)$ and right state is $(h_2, b_2)$. They satisfy $h_1b_1=h_-b_m\left(\dfrac{3\alpha b_-+\kappa h_-}{3\alpha b_m+\kappa h_m}\right)$ and $h_2b_2=h_mb_+\left(\dfrac{3\alpha b_m+\kappa h_m}{3\alpha b_++\kappa h_+}\right)$. Using the initial conditions $h_->h_m>h_+$ and $b_->b_m>b_+$, we obtain $h_1>h_2$ and $b_1>b_2$. Consequently, the solution of this new Riemann problem consists of a contact discontinuity $J_3$ followed by a shock wave $S_3$ with propagation speeds given by
\[
    \mu_3=h_3\left(\alpha b_3+\dfrac{\kappa h_3}{3}\right)=\mu_1,~\text{and}~
    \sigma_3=(h_2+h_3)\left(\alpha b_2+\dfrac{\kappa h_2}{3}\right)+h_3\left(\alpha b_3+\dfrac{\kappa h_3}{3}\right),
\]
where $\mu_1$ is the speed of $J_1$. Moreover, the curve of contact discontinuity $J_3$ is given by
\begin{align}
    x(t)=x_1+\left(\alpha h_- b_-+\dfrac{\kappa h_-^2}{3}\right)(t-t_1).
\end{align}
Since the propagation speeds of $J_1$ and $J_3$ are identical, the two contact discontinuities remain parallel and do not interact further for $t>t_1$.

Further, the intermediate state between $J_3$ and $S_3$ is given by $(h_3, b_3)$ such that $h_3>h_2>h_+$ and $b_3>b_2>b_+$. Therefore, for $t > t_1$, the shock curve associated with $S_3$ is given by
\begin{align}\label{eq: shock_S3}
    x(t)=x_1+\left((h_2+h_3)\left(\alpha b_2+\dfrac{\kappa h_2}{3}\right)+h_3\left(\alpha b_3+\dfrac{\kappa h_3}{3}\right)\right)(t-t_1)
\end{align}
along with the propagation speed
\begin{align*}\vspace{-0.2 cm}
    \sigma_3=(h_2+h_3)\left(\alpha b_2+\dfrac{\kappa h_2}{3}\right)+h_3\left(\alpha b_3+\dfrac{\kappa h_3}{3}\right).
\end{align*}
Further, the propagation speed of $S_2$ is
\begin{align*}\vspace{-0.2 cm}
    \sigma_2=(h_++h_2)\left(\alpha b_++\dfrac{\kappa h_+}{3}\right)+h_2\left(\alpha b_2+\dfrac{\kappa h_2}{3}\right).
\end{align*}
Since $h_3>h_2>h_+$ and $b_3>b_2>b_+$, it follows that $\sigma_3>\sigma_2$. Therefore, the two waves interact at a later time $t=t_2$, giving rise to a new Riemann problem at the interaction point $(x_2, t_2)=\left(\dfrac{\sigma_2 x_1+\sigma_2\sigma_3 t_1-\sigma_3 \varepsilon}{\sigma_2-\sigma_3}\varepsilon, \dfrac{x_1-\varepsilon-\sigma_3 t_1}{\sigma_2-\sigma_3}\right)$ with the initial data \\
$(h_3, b_3)=\left(\sqrt{\dfrac{h_-h_+(3\alpha b_-+\kappa h_-)}{3\alpha b_++\kappa h_+}}, b_+\sqrt{\dfrac{h_- (3\alpha b_-+\kappa h_-)}{h_+(3\alpha b_++\kappa h_+)}}\right)$ and $(h_+, b_+)$. 

In view of $h_3b_3>h_+b_+$, the solution of the new Riemann problem again consists of a contact discontinuity followed by a shock. Here, the contact discontinuity coincides with $J_3$, while a new shock $S_4$ develops with the propagation speed
\[\sigma_4=(h_++h_-)\left(\alpha b_++\dfrac{\kappa h_+}{3}\right)+h_-\left(\alpha b_-+\dfrac{\kappa h_-}{3}\right).\]
Clearly $\sigma_4>\mu_3$, and thus no further interaction is possible for $t>t_2$ and the solution for time $t>t_2$ takes the form
\[(h_-, b_-)\quad \underset{\rightarrow}{J_1}\quad (h_1, b_1)\quad \underset{\rightarrow}{J_3}\quad(h_3, b_3)\quad\underset{\rightarrow}{S_4}\quad(h_+, b_+),\]
where the shock curve $S_4$ is given by
\begin{align}
    x(t)=x_2+\left((h_++h_-)\left(\alpha b_++\dfrac{\kappa h_+}{3}\right)+h_-\left(\alpha b_-+\dfrac{\kappa h_-}{3}\right)\right)(t-t_2).
\end{align}
It is straightforward to observe that, as the perturbation parameter $\varepsilon\rightarrow 0$, the initial discontinuities at $x=-\varepsilon$ and $x=\varepsilon$ and the interaction points $(x_1, t_1)$ and $(x_2, t_2)$ tend to $(0, 0)$ and only the left Riemann state $(h_-, b_-)$ and the right Riemann state $(h_+, b_+)$ remains. Moreover, the curves $J_1$ and $J_3$ coincide and converge to the contact discontinuity curve $J_3$. Likewise, the shock curves $S_1$, $S_2$, $S_3$, and $S_4$ coincide and converge to the shock curve $S_4$. Therefore, the solution of the perturbed Riemann problem \eqref{eq: main_system}-\eqref{initial_data} converges to the solution of \eqref{eq: main_system}-\eqref{Riemann_data} as $\varepsilon\rightarrow 0$.

Moreover, for sufficiently large times $t>t_2$, the outcome of the interactions can be expressed as
\[(h_-, b_-)\quad\underset{\rightarrow}{J_3}\quad(h_3, b_3)\quad \underset{\rightarrow}{S_4}\quad(h_+, b_+),\]
which coincides exactly with the solution of the Riemann problem \eqref{eq: main_system}-\eqref{Riemann_data}. This demonstrates that the asymptotic behaviour of the perturbed Riemann problem is governed by the initial Riemann states as $t\rightarrow \infty$. Consequently, the solution of the Riemann problem \eqref{eq: main_system}-\eqref{Riemann_data} is globally stable under small perturbations of the initial data in this case.
\subsubsection{Case II: $J_1+S_1$ and $J_2+R_2$.}\vspace{-0.2 cm}
For this case of nonlinear interaction, the initial data \eqref{initial_data} satisfy $h_-b_-> h_mb_m>0$ and $0<h_m b_m \leq h_+b_+$. Thus, the solution of the perturbed Riemann problem \eqref{eq: main_system}-\eqref{initial_data} for sufficiently small $t>0$ is given by
\begin{align*}
(h_-, b_-)\quad \underset{\rightarrow}{J_1} \quad(h_1, b_1)
 \quad\underset{\rightarrow}{S_1}\quad (h_m, b_m)\quad \underset{\rightarrow}{J_2}\quad (h_2, b_2)\quad \underset{\rightarrow}{R_2}\quad (h_+, b_+), 
\end{align*}
Similar to the previous case, we have $\sigma_1>\mu_2$. Therefore, $S_1$ and $J_2$ interact after a finite time $t=t_1$ at the point $(x_1, t_1)=\left((\sigma_1+\mu_2)\varepsilon/(\sigma_1- \mu_2), 2\varepsilon/(\sigma_1- \mu_2)\right)$. This interaction gives rise to a new contact discontinuity $J_3$ and a new shock wave $S_3$, with the intermediate state $(h_3, b_3)$ which satisfies $h_3>h_2$ and $b_3>b_2$. 
Moreover, the propagation speeds of the shock wave $S_3$ and tail of the rarefaction wave $R_2$ are given by 
\[
    \sigma_3=(h_2+h_3)\left(\alpha b_2+\dfrac{\kappa h_2}{3}\right)+h_3\left(\alpha b_3+\dfrac{\kappa h_3}{3}\right),\quad\text{and}\quad
     \xi_2=3\alpha h_2 b_2+\kappa h_2^2,\vspace{-0.2 cm}
 \]
respectively.
Accordingly, since  $h_3>h_2$, it follows that $\sigma_3>\xi_2$. Therefore, the shock wave $S_3$ interacts with the tail of the rarefaction wave $R_2$ after a finite time at a point $(x_2, t_2)$, which can be determined from the system of equations
\begin{align*}
    x_2-x_1=\sigma_3(t_2-t_1),\quad x_2-\varepsilon=\xi_2 t_2.\vspace{-0.2 cm}
\end{align*}
and thus given by $(x_2, t_2)=\left((\xi_2 x_1+\xi_2\sigma_3 t_1-\sigma_3 \varepsilon)\varepsilon/(\xi_2-\sigma_3), (x_1-\varepsilon-\sigma_3 t_1)/(\xi_2-\sigma_3)\right)$.

\begin{figure}
    \centering
    \begin{subfigure}{0.495\linewidth}
    \includegraphics[width=\linewidth]{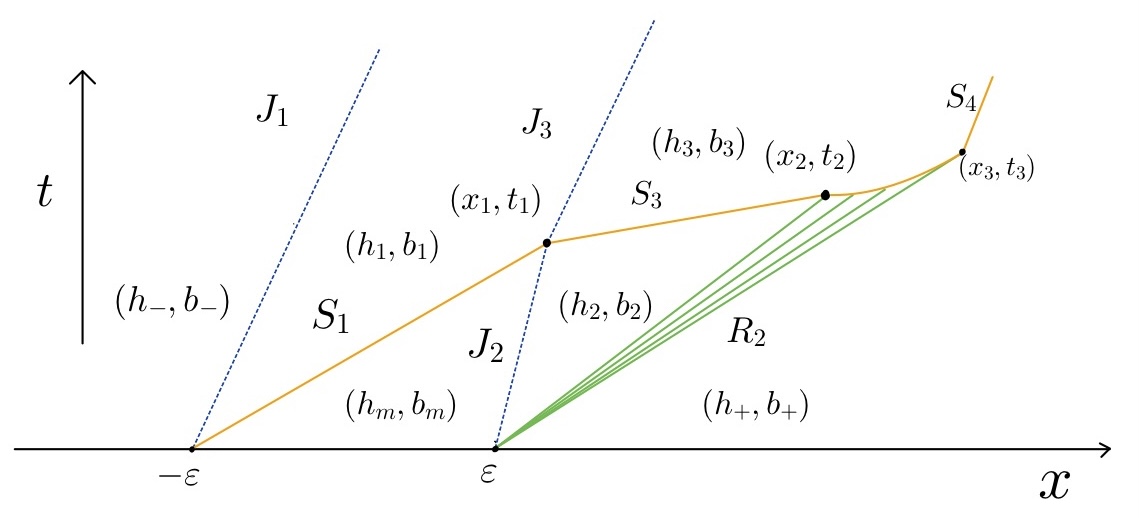}
  \caption{\centering  Solution of the \\perturbed 
  Riemann problem for \\$h_-b_-> h_mb_m$, $h_mb_m \leq h_+b_+$, \\
  $h_-b_-> h_+b_+$.}
    \label{fig:shock_subcase1}
\end{subfigure}\hfill
\begin{subfigure}{0.495\linewidth}
    \centering
    \includegraphics[width=\linewidth]{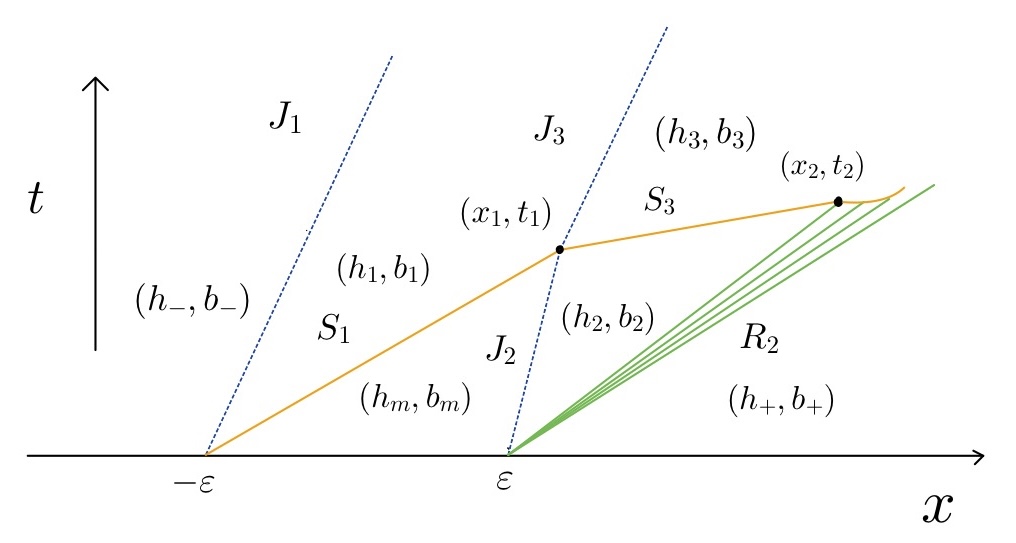}
    \caption{\centering Solution of the\\ perturbed Riemann problem for \\$h_-b_-> h_mb_m$, $h_mb_m \leq h_+b_+$, \\$h_-b_-\leq h_+b_+$.}
    \label{fig:shock_subcase2}
\end{subfigure}    
\end{figure}

The shock curve $S_3$ for $t\leq t_2$ is a straight line and is given by \eqref{eq: shock_S3}. However, for $t > t_2$, the shock wave $S_3$ enters the rarefaction $R_2$, and the shock curve can not remain a straight line anymore as the right state lies inside rarefaction $R_2$ and is no longer a constant state. The nonlinear shock curve for $t>t_2$ is expressed by
\begin{align*}
\begin{cases}
\dfrac{dx}{dt}=(h+h_3)\left(\alpha b+\dfrac{\kappa h}{3}\right)+h_3\left(\alpha b_3+\dfrac{\kappa h_3}{3}\right)\\
x-\varepsilon=(3\alpha hb+\kappa h^2)t, \\
\dfrac{b}{h}=\dfrac{b_3}{h_3}=\dfrac{b_+}{h_+},\\
x(t_2)=x_2,\quad h_2\leq h< h_3, b_2\leq b< b_3.
\end{cases}
\end{align*}
A straightforward calculation then leads to
\begin{align}\label{eq: dudt}
    \dfrac{dh}{dt}=\dfrac{(h_3^2-h^2)}{6ht}+\dfrac{h_3-h}{6t}>0.
\end{align}
and thus
\begin{align*}
    \dfrac{d^2 x}{dt^2}=\left(\left(\dfrac{2h+h_3}{3h_+}\left(3\alpha b_++\kappa h_+\right)\right)\right)\dfrac{dh}{dt}>0.
\end{align*}
This shows that the shock wave accelerates during the penetration process. Furthermore, from \eqref{eq: dudt}, we obtain
\begin{align}\label{t_expression}
    t = t_2 \exp\left( \int\limits_{h_2}^{h} 
    \displaystyle \frac {6h}{(h_3-h)(h_3+2h)}
   \,dh \right)=t_2\left(\dfrac{(h_3-h_2)^2(h_3+2h_2)}{(h_3-h)^2(h_3+2h)}\right), \quad h_2\leq h< h_3.
\end{align}
With the expression of $t$ in \eqref{t_expression}, the variable shock curve for $t>t_2$ is given by
\begin{align}
x(t)=\epsilon+t(3\alpha hb+\kappa h^2), ~h_2\leq h< h_3, ~b_2\leq b< b_3, ~x(t_2)=x_2.
\end{align}
Now, for the state $(h_3, b_3)$, two possibilities arise: either $h_3b_3\leq h_+b_+$ or $h_3b_3>h_+b_+$. We analyze these cases separately.\newline\\
\textbf{Subcase 1. ${h_3b_3>h_-b_-> h_+b_+}$.} 

In this case, the shock $S_3$ overtakes the rarefaction fan at a finite time $t_3$, which can be determined using \eqref{t_expression} by taking $h\rightarrow h_+$. At the point $(x_3, t_3)$, a new Riemann problem is formed for which $h_3b_3> h_+b_+$ and thus its solution consists of a contact discontinuity $J_3$ followed by a shock $S_4$; see \ref{fig:shock_subcase1}. Consequently, for sufficiently large times $t> t_3$, the solution of the perturbed Riemann problem can be expressed as follows:
\begin{align*}
(h_-, b_-) \quad \underset{\rightarrow}{J_3} \quad (h_3, b_3) \quad \underset{\rightarrow}{ S_4} \quad (h_+, b_+).
\end{align*}
The solution obtained here is the same as that of the unperturbed Riemann problem \eqref{eq: main_system}-\eqref{Riemann_data}. Furthermore, in the limit $\varepsilon\to 0$, all interaction points $(x_i, t_i)$ tend to $(0, 0)$ for $i=1, 2, 3$, while the nonlinear wave curves approach the shock curve $S_4$. Hence, we conclude that the solution of the Riemann problem \eqref{eq: main_system} and \eqref{Riemann_data} remains stable under small perturbations in the initial data.\newline\\
\textbf{Subcase 2. ${h_3b_3\leq h_+b_+}$.} 

For this case, it follows from the expression of $t$ in \eqref{t_expression} that the integral diverges and thus $t\rightarrow \infty$ as $h\rightarrow h_+$, which implies that the variable shock can not penetrate the rarefaction completely in a finite time; see \ref{fig:shock_subcase2}.

Therefore, the solution of the perturbed Riemann problem for $t>t_2$ can be expressed as follows
\begin{align*}
(h_-, b_-) \quad \underset{\rightarrow}{J_3} \quad (h_3, b_3) \quad \underset{\rightarrow}{ R_2} \quad (h_+, b_+).
\end{align*}
Moreover, as $\varepsilon\rightarrow 0$ the speed of the shock curve $S_1$ coincides with the back of the rarefaction wave $R$ and the two merge into the solution of the Riemann problem \eqref{eq: main_system}-\eqref{Riemann_data}. Hence, the solution of the Riemann problem \eqref{eq: main_system} and \eqref{Riemann_data} is stable under a small perturbation in initial data for this case as well.

\subsection{Solutions involving delta-shock and composite waves and their interactions}
In this section, we focus on cases that involve the interaction of delta-shock waves with classical waves and composite waves. In particular, we consider situations where certain states from the initial data \eqref{initial_data} lie on the boundary of the state space $\partial \mathcal{U}$.

\subsubsection{Case III: $\delta{S_1}$ and $J_2R_2$.}
For a given $(h_-, b_-)$, we choose $(h_m, b_m)$ and $(h_+, b_+)$ in such a way that $h_\pm, b_\pm, b_m\in \mathcal{U}^{\circ}$ and $h_m\in \partial \mathcal{U}$. Thus, for a sufficiently small time $t>0$, the solution of \eqref{eq: main_system}-\eqref{initial_data} reads as
\begin{align*}
(h_-, b_-)\quad \underset{\rightarrow}{\delta{S}_1}\quad(0, b_m)\quad \underset{\rightarrow}{J_2R_2}\quad (h_+, b_+).
\end{align*}
The speed of $\delta{S}_1$ is $\sigma_{{\delta}_1}=\alpha h_- b_-+\frac{\kappa h_-^2}{3}$ while the speed of $J_2$ is $\mu_2=0$ and thus $\sigma_{{\delta}_1}>\mu_2$. This indicates that $\delta S_1$ overtakes $J_2$ at the point $(x_1, t_1)=\left(\epsilon, \dfrac{6\varepsilon}{3\alpha h_-b_-+\kappa h_-^2}\right)$. Moreover, in view of the generalized R-H conditions, the strength of the $\delta S_1$ at $t=t_1$ is then given by $\beta(t_1)=2b_m\epsilon$. The overcompressibility condition of the delta shock fails at $x=\varepsilon$ as $h_+\neq 0$ and thus the solution can not be a new delta shock for $t>t_1$. However, we approximate the rarefaction wave $R_2$ by a set of nonphysical shock waves as in \cite{shen2018delta}. Therefore, at the point of interaction $(x_1, t_1)$, new Riemann data is formulated and defined as
\begin{align}\label{eq: NRP}
    h|_{t=t_1}=\left\{\begin{array}{ll}
        h_-, &x<x_1,\\
        h& x>x_1,
        \end{array}
    \right.,  \quad b|_{t=t_1}=
    \left\{\begin{array}{ll}
       b_-, &x<x_1,\\
        b& x>x_1,\\  
    \end{array}\right\}
    +\beta(t_1) \delta_{(x_1, t_1)},
\end{align}
In \eqref{eq: NRP}, the continuously varying right state $(h, b)$ can be obtained from
\begin{align}\label{eq: xepsilon}
\begin{cases}
    \dfrac{x-\varepsilon}{t}&=3\alpha hb+\kappa h^2,\\
    \dfrac{b}{h}&=\dfrac{b_+}{h_+},\quad 0<h<h_+, ~b_m<b<b_+,\\
    \end{cases}\vspace{0.2 cm}\nonumber\\
\hspace{-3 cm}\text{i.e.,}\quad\qquad\qquad(h, b)=\left(\sqrt{\dfrac{(x-\varepsilon) h_+}{t(3\alpha b_++\kappa h_+)}},b_+\sqrt{\dfrac{(x-\varepsilon)}{h_+t(3\alpha b_++\kappa h_+)}} \right).   
\end{align}
For $t>t_1$, $\delta S_1$ splits into a delta contact discontinuity having support on a curve $\Gamma_1$ and a shock wave having
support on some other curve $\Gamma_2$, where $\Gamma_1$ and $\Gamma_2$ are to be determined.

The solution of the local Riemann problem \eqref{eq: main_system}-\eqref{eq: NRP} can be constructed as follows:
\begin{align}\label{eq: NRP_soln}
    &h(x, t)=\begin{cases}
        h_-, &x<\sigma_{\delta_1}t,\\
        \sqrt{\dfrac{h h_-(3\alpha b_-+\kappa h_-)}{3\alpha b+\kappa h}}, & \sigma_{\delta_1}t<x<x(t),\\
      h, &x>x(t).
    \end{cases}\\
    &b(x, t)=\left\{
     \begin{array}{lll}
        b_-, &x<\sigma_{\delta_1}t,\\
        b\sqrt{\dfrac{h_-(3\alpha b_-+\kappa h_-)}{h(3\alpha b+\kappa h)}}, & \sigma_{\delta_1}t<x<x(t),\vspace{0.2 cm}\\
        b, &x>x(t).
    \end{array}
    \right\}+\beta(t_1)\left(\delta-\sigma_{\delta_1}t\right),\label{eq: NRP_soln_1}
\end{align}
where the shock curve $\Gamma_2: x=x(t)$ in the local neighbourhood of $(x_1, t_1)$ is given by
\begin{align}
x(t)=x_1+\left((h+h_-)\left(\alpha+\frac{\kappa h_-}{3}\right)+h_-\left(\alpha b_-+\frac{\kappa h_-}{3}\right)\right)(t-t_1).
\end{align}
Furthermore, $\Gamma_1$ is defined by $\Gamma_1=\{(x, t)| x(t)=\sigma_{\delta_1}(t-t_1), ~t>t_1\}$. In what follows, we prove that \eqref{eq: NRP_soln}-\eqref{eq: NRP_soln_1} is actually a weak solution for the system \eqref{eq: main_system}. First, for any $\varphi\in C_0^{\infty}(\mathbb{R}\times \mathbb{R}_+)$, it is easy to verify that \eqref{eq: NRP_soln}-\eqref{eq: NRP_soln_1} is a weak solution if $\rm{supp}~\{\varphi\cup \Gamma_1\}=\emptyset$. Alternatively, if $\rm{supp}~\{\varphi\cup \Gamma_1\}\neq \emptyset$, from standard arguments, one can easily show that \eqref{eq: NRP_soln}-\eqref{eq: NRP_soln_1} actually satisfies the weak formulation of the first equation of \eqref{eq: main_system}. However, for the second equation, a singularity can occur and needs separate attention. For the second equation, the equality
\begin{align*}
 b_t&+\left(\alpha h b^2+ \frac{\kappa hb^2}{3}\right)_x\\
 &=-\left(\alpha h_-b_-+\frac{\kappa h^2}{3}\right)\left(\left(b\sqrt{\dfrac{h_-(3\alpha b_-+\kappa h_-)}{h(3\alpha b+\kappa h)}}-b_-\right)\delta+\beta(t_1)\delta' \right)\\
&+\left(\alpha h_-b_-+\frac{\kappa h^2}{3}\right)\left(\left(b\sqrt{\dfrac{h_-(3\alpha b_-+\kappa h_-)}{h(3\alpha b+\kappa h)}}-b_-\right)\delta+\beta(t_1)\delta' \right)=0
\end{align*}
holds in the local neighbourhood of $\Gamma_1$ in the sense of distributions, in which $\delta$ and $\delta'$ are the functions of $x-\left(\alpha h_-b_-+\frac{\kappa h_-^2}{3}\right)(t-t_1)$. Thus, \eqref{eq: NRP_soln}-\eqref{eq: NRP_soln_1} satisfies the weak formulation of the second equation of \eqref{eq: main_system} in the local neighbourhood of $\Gamma_1$. Collectively, \eqref{eq: NRP_soln}-\eqref{eq: NRP_soln_1} is a weak solution of \eqref{eq: main_system}.

Therefore, we can see that the Dirac delta solution is now supported on the contact discontinuity line $x=x_1+\left(\alpha h_-b_-+\frac{\kappa h_-^2}{3}\right)(t-t_1)$ and thus is referred as the delta contact discontinuity $\delta J_3$ (see \cite{nedeljkov2008interactions}). This shows that for time $t>t_1$, the delta shock wave $\delta S_1$ decomposes into a delta contact discontinuity $\delta J_3$ and a shock $S_3$, and the state $(h_1, b_1)=\left(\sqrt{\dfrac{h_+ h_-(3\alpha b_-+\kappa h_-)}{3\alpha b_++\kappa h_+}},  b_+\sqrt{\dfrac{h_-(3\alpha b_-+\kappa h_-)}{h_+(3\alpha b+\kappa h)}}\right)$ lies between $\delta J_3$ and $S_3$. $\delta J_3$ continues to move forward with a constant speed $\left(\alpha h_-b_-+\frac{\kappa h_-^2}{3}\right)$. However, the new shock $S_3$ starts penetrating the rarefaction through the continuation of the curve $\Gamma_2$ and has a varying nonlinear curve. The variable shock wave curve during the penetration process is calculated by
\begin{align*}
\begin{cases}
\dfrac{dx}{dt}=(h+h_-)\left(\alpha b+\frac{\kappa h}{3}\right)+h_-\left(\alpha b_-+\frac{\kappa h_-}{3}\right),\vspace {0.2 cm}\\
\dfrac{x-\varepsilon}{t}=3\alpha hb+\kappa h^2, 0\leq h\leq h_+, b_m\leq b\leq b_+.
\end{cases}
\end{align*}
\begin{figure}
    \centering
    \begin{subfigure}{0.495\linewidth}
    \includegraphics[width=\linewidth]{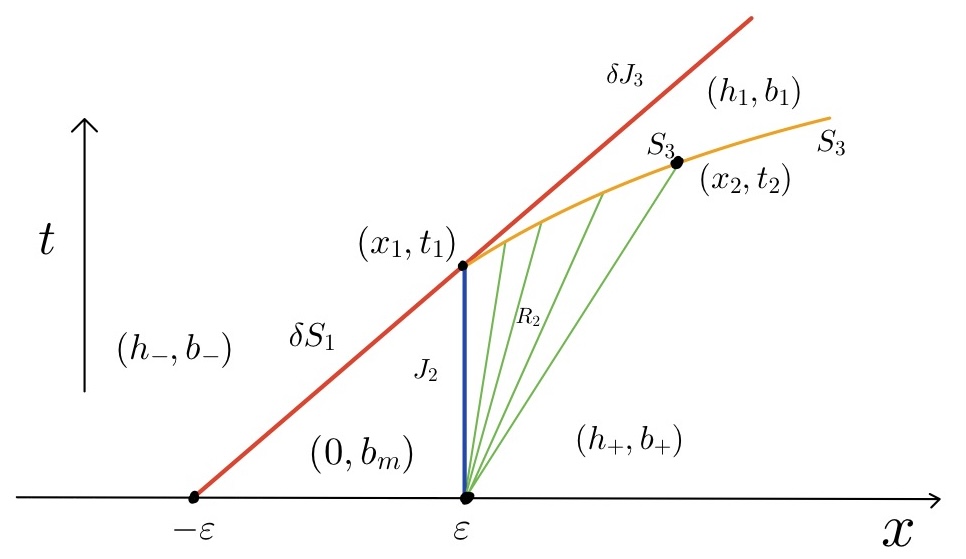}
    \caption{ \centering Solution of the \\perturbed 
  Riemann problem for\\
  $h_m=0$, $0<h_+b_+<h_-b_-$.}
    \label{fig:subcase_1_delta}
    \end{subfigure}\hfill
     \begin{subfigure}{0.495\linewidth}
    \includegraphics[width=\linewidth]{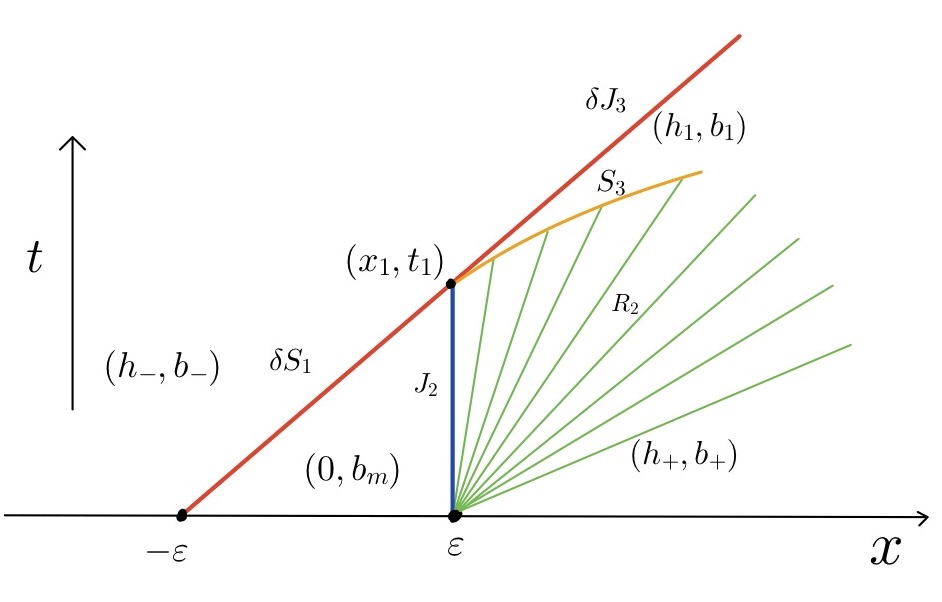}
    \caption{\centering Solution of the \\perturbed 
  Riemann problem for\\ $h_m=0$, $0<h_-b_-<h_+b_+$.}
    \label{fig:subcase_2_delta}
    \end{subfigure}
\end{figure}
It is clear that at time $t=t_1$, shock speed coincides with the speed of $\delta J_3$ and thus $\delta J$ is tangential with $S_3$ at $(x_1, t_1)$. By a similar calculation as in Case II, one can see that $\frac{d^2 x}{dt^2}<0$, which indicates that the shock wave slows down during the process of penetration. Based on the comparsion between the values of $h_-, b_-$ and $h_+, b_+$, we divide the discussion into following two cases.\vspace{0.2 cm}\\

\textbf{Subcase 1. $0<h_+b_+<h_-b_-$.}

In this subcase, $S_3$ penetrates $R_2$ completely at a point $(x_3, t_3)$ which can be obtained by solving the following two equations
\begin{align}
    x-x_1&=(h+h_-)\left(\alpha b_-+\frac{\kappa h_-}{3}\right)+h_-\left(\alpha b_-+\frac{\kappa h_-}{3}\right)(t-t_1)\\
    x-\varepsilon&= 3\alpha h_+ b_++\kappa h_+^2
\end{align}
For $t>t_3$, the shock wave propagates with an invariant speed. Letting $\varepsilon\rightarrow 0$, one can easily see that the limit of the solution \eqref{eq: main_system}-\eqref{initial_data} is the contact discontinuity $J$ followed by a shock wave $S$; see \ref{fig:subcase_1_delta}.\newline\\

\textbf{Subcase 2. $0<h_-b_-<h_+b_+$.}

In this subcase, $S_3$ is unable to cancel  $R_2$ completely and has the straight line $x(t)=\varepsilon+\left(3\alpha h_-b_-+\kappa h_-^2\right)t$ as its asymptotic line; see \ref{fig:subcase_2_delta}.

\subsubsection{Case IV: $J_1+S_1$ and $\delta{S_2}$.}
Here, we choose the initial data \eqref{initial_data} such that $0<h_mb_m<h_-b_-$ and $h_+=0$.
Therefore, for a sufficiently small time $t$, the solution structure is then given by
\begin{align*}
(h_-, b_-)\quad \underset{\rightarrow}{J_1}\quad(h_1, b_1)\quad \underset{\rightarrow}{S_1}\quad (h_m,b_m)\quad \underset{\rightarrow}{\delta{S_2}}\quad(0,b_+).
\end{align*}
In analogy with the first case, the propagation speed $\sigma_1$ of the shock wave ($S_1$) is greater than the speed $\sigma_{\delta_2}$ of the delta shock wave ($\delta{S}_2$). It follows that $S_1$ eventually overtakes $\delta S_2$ and the interaction occurs at a finite time $t=t_1$ at a point $(x_1, t_1)= \left((\sigma_1+\sigma_{\delta_2})\varepsilon/(\sigma_1- \sigma_{\delta_2}), 2\varepsilon/(\sigma_1- \sigma_{\delta_2})\right)$ in the $x-t$ plane. Using the generalized Rankine-Hugoniot conditions \eqref{GRH}, the strength of the delta shock $\delta{S_1}$ at the interaction point $(x_1, t_1)$ is given by
\begin{align}\label{sec2}
\beta(t_1)=b_+\sigma_{\delta_2}t_1.
\end{align}
\begin{figure}
    \centering
    \includegraphics[width=0.65\linewidth]{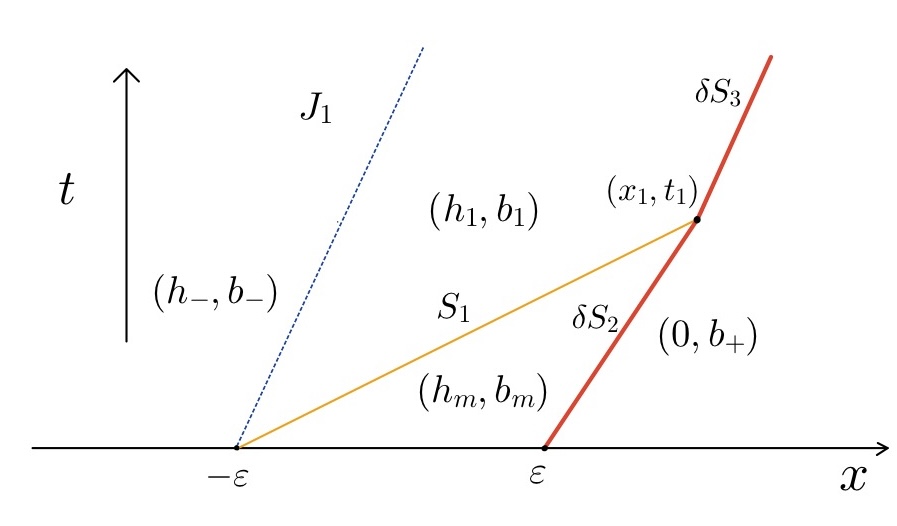}
    \caption{Solution of the perturbed Riemann problem for $0=h_+<h_mb_m<h_-b_-$.}
    \label{fig:placeholder}
\end{figure}
At the point of interaction $(x_1, t_1)$, a new Riemann problem is formed with the initial data of the form
\begin{align*}
h|_{t=t_1}= \left\{
        \begin{array}{ll}
            h_1,&x<x_1,\\
            0,&x>x_1,\\
         \end{array}
    \right.,~~ b|_{t=t_1}= \left\{
        \begin{array}{ll}
            b_1,&x<x_1,\\
            b_+,&x>x_1,\\
         \end{array}
    \right\}
+\beta(t_1)\delta_{(x_1, t_1)},
\end{align*}
where $(h_1, b_1)$ is the intermediate state between $J_1$ and $S_1$ and is obtained by \eqref{intermediate}.

Due to the overcompressibility conditions \eqref{eq: GEC}, a new $\delta$-shock $\delta {S_3}$ is generated after the interaction from the point $(x_1, t_1)$, which can be described as
\begin{align*}
h(x, t)= \left\{
        \begin{array}{ll}
            h_1,&x<x(t),\\
            0,&x>x(t),\\
         \end{array}
    \right.,~~ b(x, t)= \left\{
        \begin{array}{ll}
            b_1,&x<x(t),\\
            b_+,&x>x(t),\\
         \end{array}
    \right\}
+ \beta(t)\delta{(x-x(t))},
\end{align*}
where delta function have support on the curve $\Gamma: x(t)=x_1+\sigma_{\delta_3}(t-t_1)$ with 
\[\sigma_{\delta_3}=\alpha h_1b_1+\dfrac{\kappa h_1^2}{3}=\alpha h_-b_-+\dfrac{\kappa h_-^2}{3}=\mu_1.\]
Since the propagation speed of the delta shock $\delta{S}_3$ is equal to the speed of the contact discontinuity $J_1$, the delta shock $\delta{S}_3$ does not interact with $J_1$ for $t>t_1$. Moreover, its strength is given by $\beta(t)=b_+\sigma_{\delta_3}t, \quad t>t_1.$

It is clear that $(x_1, t_1)\rightarrow (0, 0)$, $\beta(t_1)\rightarrow 0$ as $\varepsilon \rightarrow 0$ and thus the solution of the perturbed Riemann problem converges to the delta shock solution of the Riemann problem. This indicates that the solution of the Riemann problem is stable under the perturbation of initial data. Moreover, for $t>t_1$, the solution structure of \eqref{eq: main_system}-\eqref{initial_data} is $\delta S_3$, which is the same as the delta shock solution of the Riemann problem \eqref{eq: main_system}-\eqref{Riemann_data}, which proves that the asymptotic behaviour of the perturbed Riemann problem is governed by the Riemann states for sufficiently large time.

\subsubsection{Case V:  $J_1+R_1$ and $\delta{S_2}$.}
For this case, we choose the initial data \eqref{initial_data} such that $0<h_-b_-<h_mb_m$ and $h_+=0$.
 For a sufficiently small $t>0$, the solution structure is then given by
\begin{align*}
(h_-, b_-)\quad \underset{\rightarrow}{J_1}\quad(h_1, b_1)\quad \underset{\rightarrow}{R_1}\quad (h_m,b_m)\quad \underset{\rightarrow}{\delta{S_2}}\quad(0,b_+).
\end{align*}
Since the speed of the $\delta$-shock wave of the second Riemann problem $\sigma_{\delta_2}=\alpha h_mb_m+\kappa h_m^2/3$ is less than the speed of the wave front of the rarefaction wave $\xi_1=3\alpha h_mb_m+\kappa h_m^2$ of the first Riemann problem, it follows that $\delta{S}_2$ interact with $R_1$ after a finite time $t=t_1$ and the interaction take place at a point $\left(x_1, t_1\right)=\left(2\varepsilon, \dfrac{3\varepsilon}{3\alpha h_mb_m+\kappa h_m^2}\right)$ in the $x-t$ plane. 

\begin{figure}
    \centering
    \includegraphics[width=0.65\linewidth]{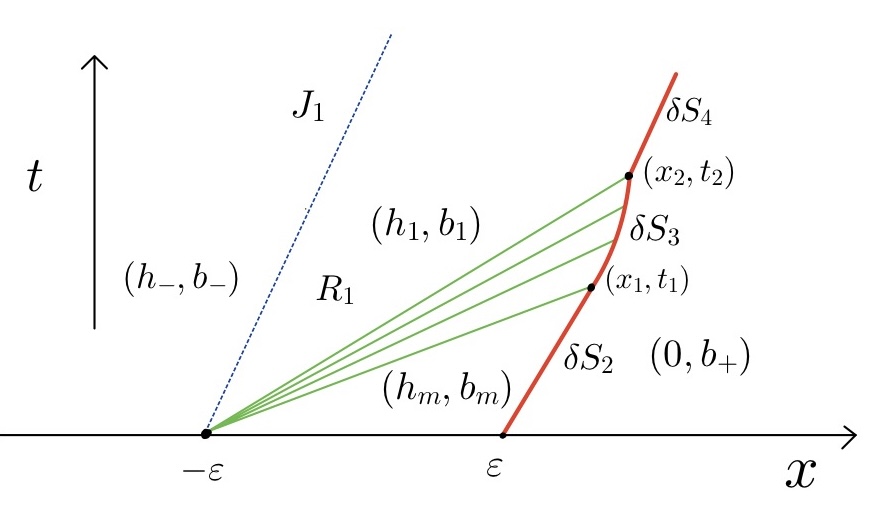}
    \caption{Solution of the perturbed Riemann problem for $0=h_+<h_-b_-<h_mb_m$.}
    \label{fig:placeholder}
\end{figure}
The strength of $\delta{S_2}$ at $(x_1, t_1)$ is given by
\begin{align}\label{sec4}
\beta(t_1)=b_+\sigma_{\delta_2}t_1=b_+ \varepsilon.
\end{align}
For $t>t_1$, the delta shock wave $\delta S_2$ penetrates the rarefaction wave $R_1$. During this penetration process, a new delta shock wave $\delta{S}_3$ is generated due to the overcompressibility conditions \eqref{eq: GEC} ($h>0=h_+$). The wave $\delta{S}_3$ is the solution of the local Riemann problem with a fixed right state $(h_+, b_+)=(0, b_+)$ and a variable left state $(h, b)$ varying continuously within the rarefaction fan $R_1$. We use $\Gamma_1:\{(x(t), t): t\geq{t_1}\}$ with $x(t)$ to express the curve $\delta{S}_3$. The varying curve can be obtained using
\begin{align}\label{varying_shock}
\begin{cases}
\dfrac{dx}{dt}=\alpha hb+\dfrac{\kappa h^2}{3},\vspace{0.1 cm}\\
\dfrac{x+\varepsilon}{t}=3\alpha hb+\kappa h^2, \\
\dfrac{b}{h}=\dfrac{b_m}{h_m},\\
x(t_1)=x_1,\quad h_m\leq h\leq h_-, b_m\leq b\leq b_-.
\end{cases}
\end{align}
From the first two equations, we have
\begin{align}
    \dfrac{dx}{dt}=\dfrac{x+\varepsilon}{3t}
\end{align}
Therefore, the delta shock curve is given by
\begin{align}\label{delta_shock_new}
    x(t)=\left(9\varepsilon^2 (3\alpha h_mb_m+\kappa h_m^2) t\right)^{1/3}-\varepsilon.
\end{align}
Moreover, a computation analogous to that in Case 2 shows that  $\dfrac{dh}{dt}<0$, which in turn implies $\dfrac{d^2 x}{dt^2}<0$. Therefore, the propagation speed of the delta shock $\delta S_3$ decreases during the penetration process. 

For $t>t_1$, the strength of the delta shock wave $\delta S_3$ is obtained from the equation $\frac{d\beta(t)}{dt}=b_+\left(\alpha hb+\dfrac{\kappa h^2}{3}\right)$ with initial condition $\beta(t_1)$. Using the second equation from \eqref{varying_shock} and \eqref{delta_shock_new}, we have
\begin{align}
    \dfrac{d\beta}{dt}=\dfrac{b_+}{3}\left(\dfrac{9\varepsilon^2 (3\alpha h_mb_m+\kappa h_m^2)}{t^2}\right)^{1/3}.
\end{align}
Therefore, the strength of $\delta S_3$ for $t>t_1$ is given by
\begin{align}\label{delta_shock_strength_new}
    \beta(t)=b_+(9\varepsilon^2(3\alpha h_mb_m+\kappa h_m^2))^{1/3}(t^{1/3}-(t_1)^{1/3})+\beta(t_1).
\end{align}
Finally, $\delta{S}_3$ cuts the entire rarefaction $R_1$ in a finite time $t=t_2$ and ends at the interaction point $(x_2, t_2)$, which is obtained from \eqref{delta_shock_new} along with the relation of wave back of rarefaction $R_1$ such that $\frac{x+\varepsilon}{t}=3\alpha h_-b_-+\kappa h_-^2$. A simple computation then yields 
\begin{align}
    t_2=\dfrac{(9\varepsilon^2 (3\alpha h_mb_m+\kappa h_m^2)^{1/3}}{(3\alpha h_-b_-+\kappa h_-^2)},\quad x_2=\left(9\varepsilon^2 (3\alpha h_mb_m+\kappa h_m^2) t_2\right)^{1/3}-\varepsilon.
\end{align}
At point $(x_2, t_2)$, we again have a new Riemann problem for which the solution for $t>t_2$ is given by a new delta shock $\delta{S}_4$ due to overcompressibility conditions. The propagation speed of $\delta{S}_4$ is $\alpha h_-b_-+\frac{\kappa h_-^2}{3}$, which coincides with the speed of $J_1$. Consequently, no further interaction between $\delta{S}_4$ and $J_1$ is possible. The strength of $\delta{S}_4$ is given by $\beta(t)=\beta(t_2)+b_+\left(\alpha h_-b_-+\frac{\kappa h_-^2}{3}\right)(t-t_2)$, where $\beta(t_2)$ is given by \eqref{delta_shock_strength_new}.

Moreover, for $t>t_2$, the solution of the perturbed Riemann problem can be expressed as follows
\begin{align*}
(h_-, b_-) \quad \underset{\rightarrow}{\delta{S}_4}\quad (h_+, b_+).
\end{align*}

Also, as $\varepsilon\to 0$, all the interaction points $(x_i, t_i)\rightarrow (0, 0), ~i=\{1, 2\}$ and strength $\beta(t)\rightarrow b_+(\alpha h_-b_-+\kappa h_-^2/3)t$, which is the strength of delta shock solution of the Riemann problem \eqref{eq: main_system}-\eqref{Riemann_data}. Moreover, the speed of the contact discontinuity ($J_1$) coincides with the delta shock wave $(\delta{S}_4)$ and the two merge into the delta shock solution of the Riemann problem \eqref{eq: main_system}-\eqref{Riemann_data}. Hence, the solution of the Riemann problem \eqref{eq: main_system} and \eqref{Riemann_data} is stable under such a small perturbation in initial data for this case as well.

Collecting all the cases (I-V) together, Theorem \ref{perturbed_RP} is proved.
\section{Numerical examples}\label{sec: numerics}
In this section, we apply the Godunov scheme based on the Riemann solution from Section \ref{Sec: Riemann} for the cases when there are only classical waves and a finite-difference Lagrangian-Eulerian scheme from \cite{abreu2019fast, de2021interaction} for the cases that include delta shocks, as it has been proven to capture sharp discontinuities much better. Using these schemes, we validate our analytical results. 
\subsection{Vanishing gravity limit of the Riemann solutions}
In this section, we discuss the behaviour of the Riemann solutions of the system \eqref{eq: main_system} as $\kappa\rightarrow 0$. 
\begin{example}[$J+R$]\label{ex:J+R}
Consider the  Riemann data 
\begin{align}
    (h, b)^\top (x, 0)=\begin{cases}
        (1.24, 0.90)^\top,\quad x<0,\\
        (1.5, 1.56)^\top, ~\quad x>0.
    \end{cases}
\end{align}
The solution for this case is a contact discontinuity followed by a rarefaction wave. We use the constructed Riemann solutions to develop a Godunov solver for this case and plot the numerical solutions at time $t=1.00$ with $\Delta x=5.33\times 10^{-3}$. The asymptotic convergence towards the Riemann solutions of \eqref{thin-film}-\eqref{Riemann_data} is observed as $\kappa\rightarrow 0$; see Figure \ref{fig: J+R_new}.

\begin{figure}
    \centering
    \includegraphics[width=\linewidth]{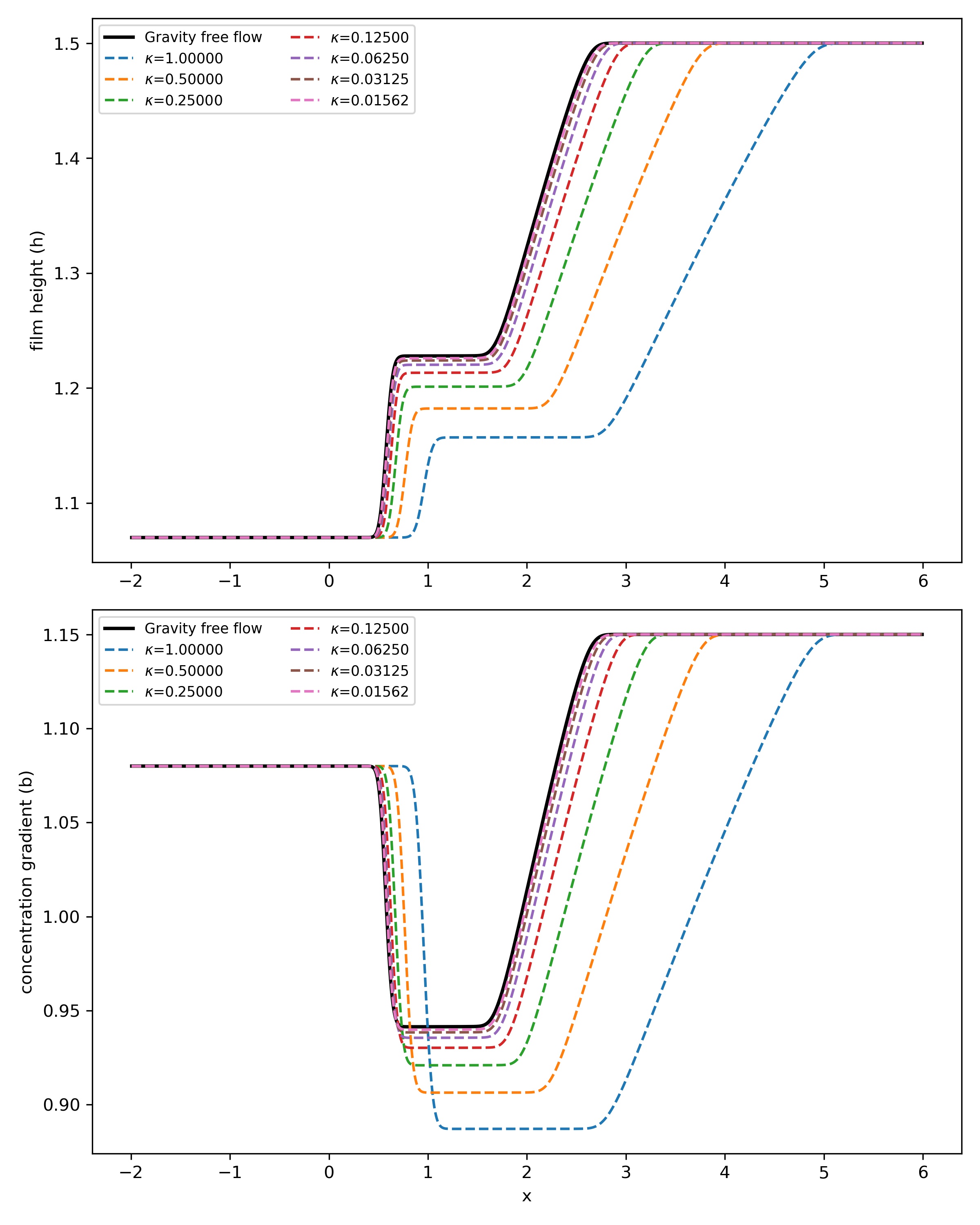}
    \caption{Convergence of Riemann solutions of \eqref{eq: main_system}-\eqref{Riemann_data} to the solutions of \eqref{thin-film}-\eqref{Riemann_data} as $\kappa\rightarrow 0$ ($J+R$).}
    \label{fig: J+R_new}
\end{figure}
\begin{figure}
        \centering
        \includegraphics[width=\linewidth]{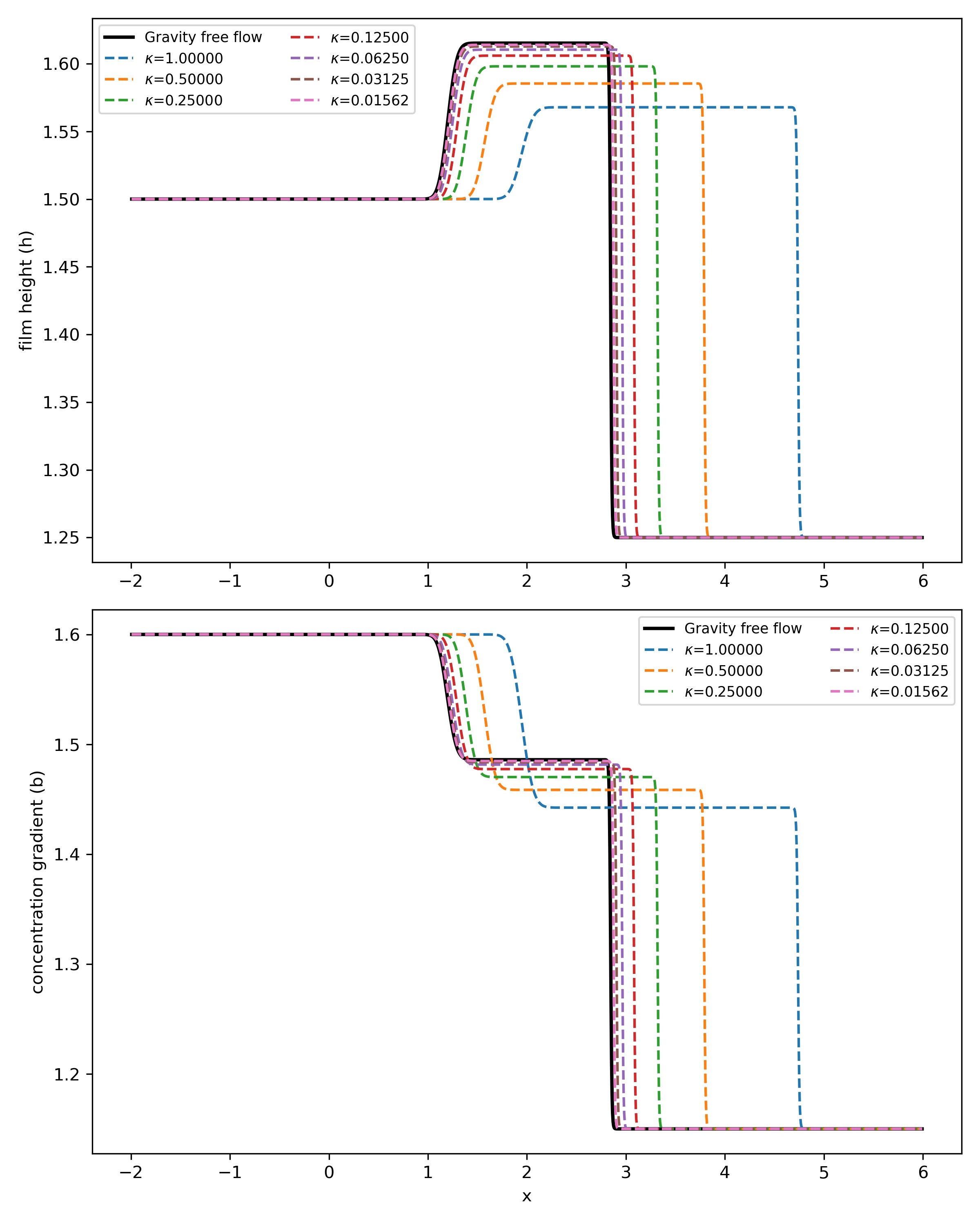}
        \caption{Convergence of Riemann solutions of \eqref{eq: main_system}-\eqref{Riemann_data} to the solutions of \eqref{thin-film}-\eqref{Riemann_data} as $\kappa\rightarrow 0$ ($J+S$).}
        \label{fig: J+S_new}
\end{figure}
\end{example}
\begin{example}[$J+S$]\label{ex:J+S}
In this example, we consider the following Riemann data 
\begin{align}
    (h, b)^\top (x, 0)=\begin{cases}
        (1.5, 1.6)^\top,~\qquad x<0,\\
        (1.25, 1.15)^\top, ~\quad x>0.
    \end{cases}
\end{align}
The solution for this case is a contact discontinuity followed by a shock wave. We again use the Godunov solver for this case and plot the numerical solutions at time $t=1.00$ with $\Delta x=5.33\times 10^{-3}$. The asymptotic convergence is again observed as $\kappa\rightarrow 0$; see Figure \ref{fig: J+S_new}.
\end{example}
\begin{example}[Delta shock ($\delta S$)]
In this example, we consider the following Riemann data 
\begin{align}
    (h, b)^\top (x, 0)=\begin{cases}
        (2.9, 1.70)^\top,~~\quad\qquad x<0,\\
        (0.0000001, 5.56)^\top, \quad x>0.
    \end{cases}
\end{align}
\begin{figure}
    \centering
    \includegraphics[width=\linewidth]{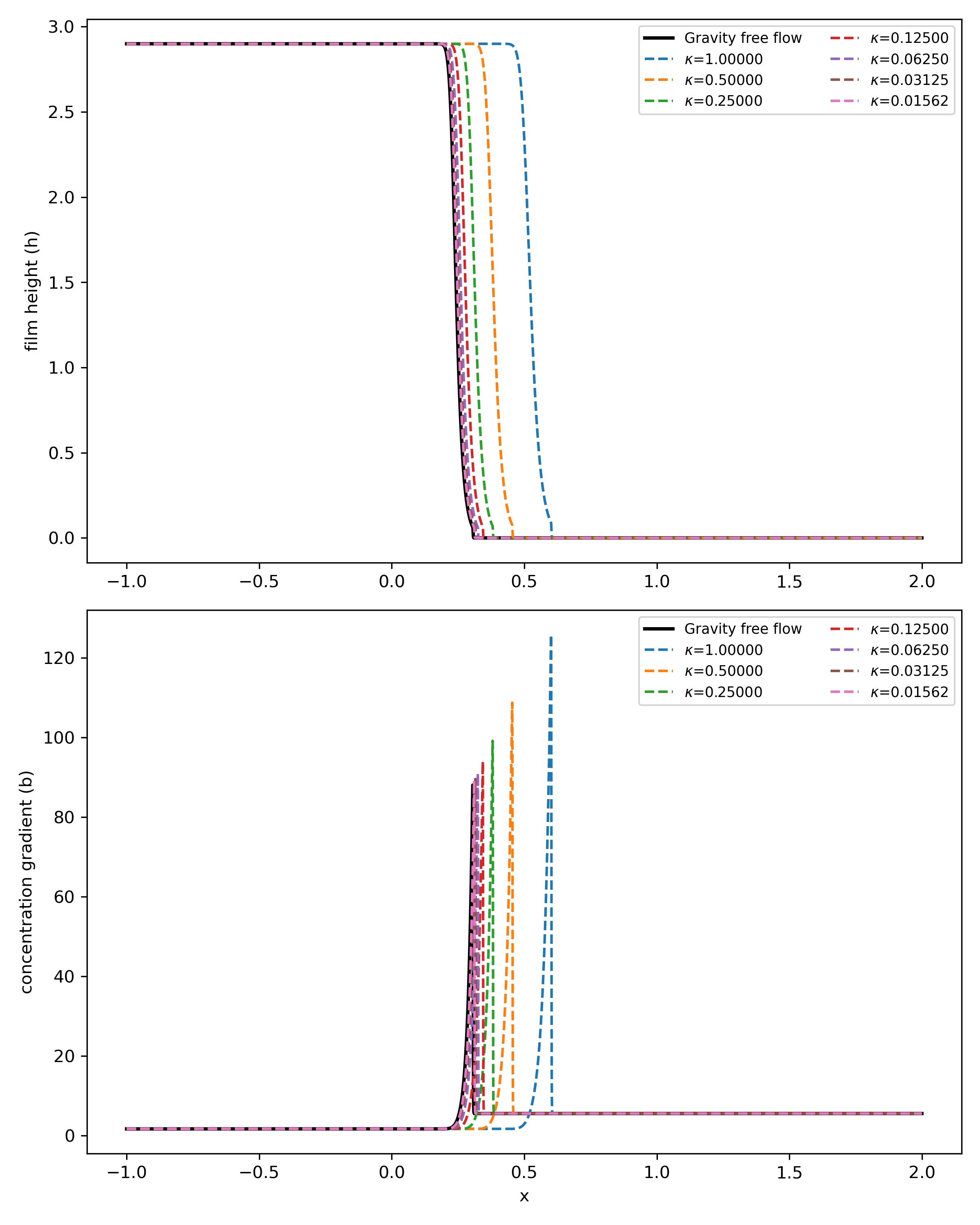}
    \caption{Convergence of delta shock solution of \eqref{eq: main_system}-\eqref{Riemann_data} to the delta shock solution of \eqref{thin-film}-\eqref{Riemann_data} as $\kappa \rightarrow 0$.}
    \label{fig:deltaS}
\end{figure}
The solution for this case is a single delta shock wave, which can be observed in Figure \ref{fig:deltaS}. We use the Lagrangian-Eulerian scheme from \cite{abreu2019fast} to plot the numerical solutions at time $t=0.10$ with $\Delta x=1\times 10^{-4}$. It is interesting to observe the effect of the gravity parameter $\kappa$ on the strength of the delta shock wave, which resonates with the expression of the strength of the delta shock and its dependency on $\kappa$.
\end{example}
\subsection{Perturbed Riemann problem}
\begin{example}
In this example, we perturb the initial data from Example \ref{ex:J+R} and include a perturbation parameter $\varepsilon$ in the initial discontinuity. In particular, we consider the following initial data
 \begin{align}\label{J+R_per_data}
    (h, b)^\top (x, 0)=\begin{cases}
        (1.24, 0.90)^\top,~~~~~\quad -\infty <x<-\varepsilon,\\
        (0.75, 1.25)^\top, ~~\qquad -\varepsilon<x<\varepsilon, \\
        (1.5, 1.56)^\top, ~~~~~\qquad \varepsilon<x<+\infty.
    \end{cases}
\end{align}
We use the Godunov solver again, and plot the numerical solutions at time $t=1.0$ and $t=15.0$ in Figure \ref{fig:J+R_t=1} and Figure \ref{fig:J+R_t=15}, respectively. It can be observed that at time $t=1.0$, the solution of the perturbed Riemann problem is the interaction of $J_1+S_1$ and $J_2+R_2$, which converges to the solution of the Riemann problem as $\varepsilon\rightarrow 0$. However, for a long time, the solution of the perturbed Riemann problem behaves as the solution of the Riemann problem even for large values of $\epsilon$, which indicates that the asymptotic behaviour $(t\rightarrow \infty)$ of the perturbed Riemann problem is governed by the underlying Riemann problem only.
\begin{figure}
\centering
    \includegraphics[width=\linewidth]{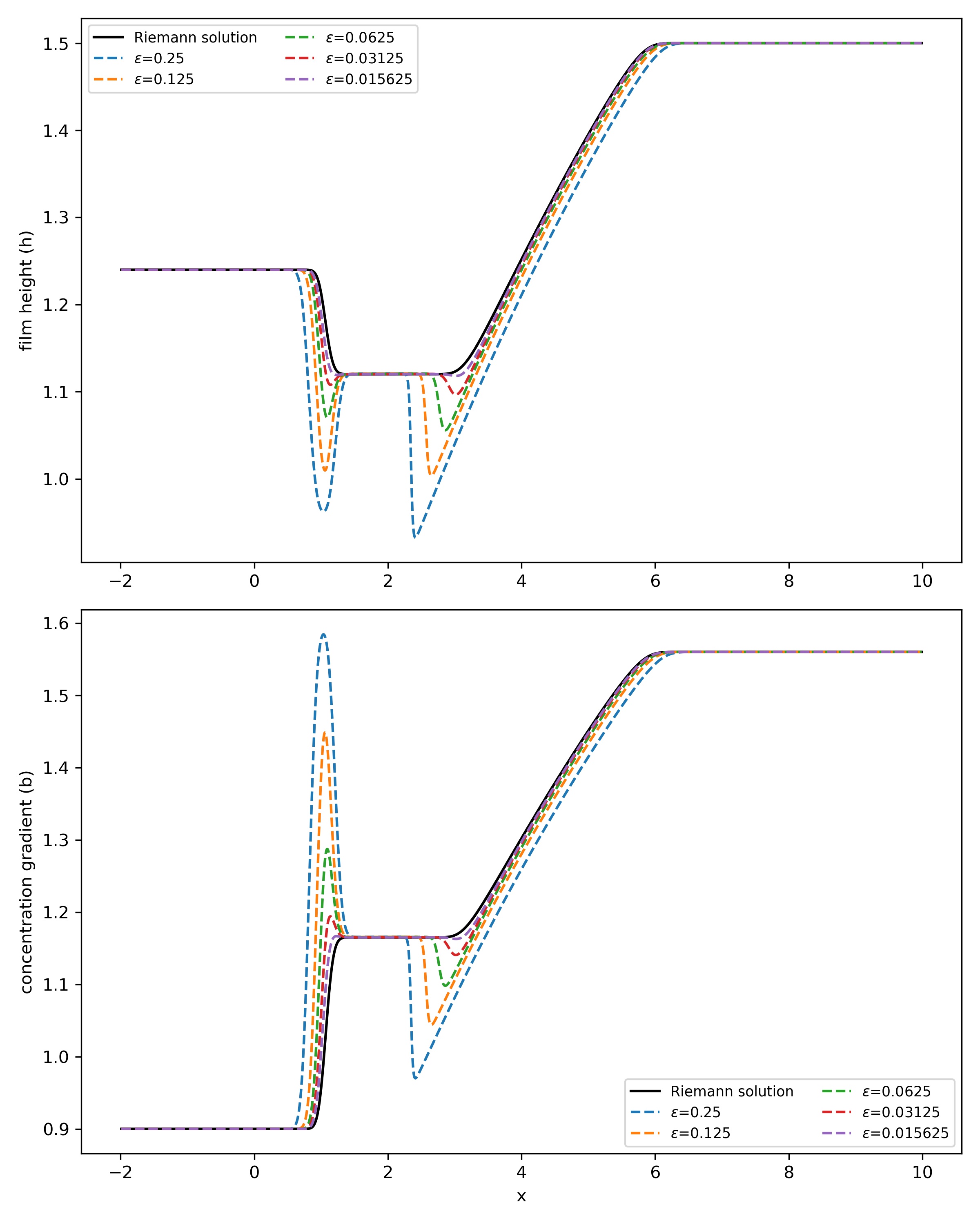}
    \caption{Solution of perturbed Riemann problem with initial data \eqref{J+R_per_data} at $t=1.0$ \\(Interaction of $J_1+S_1$ and $J_2+R_2$).}
    \label{fig:J+R_t=1}
\end{figure}
    \begin{figure}
        \centering
        \includegraphics[width=\linewidth]{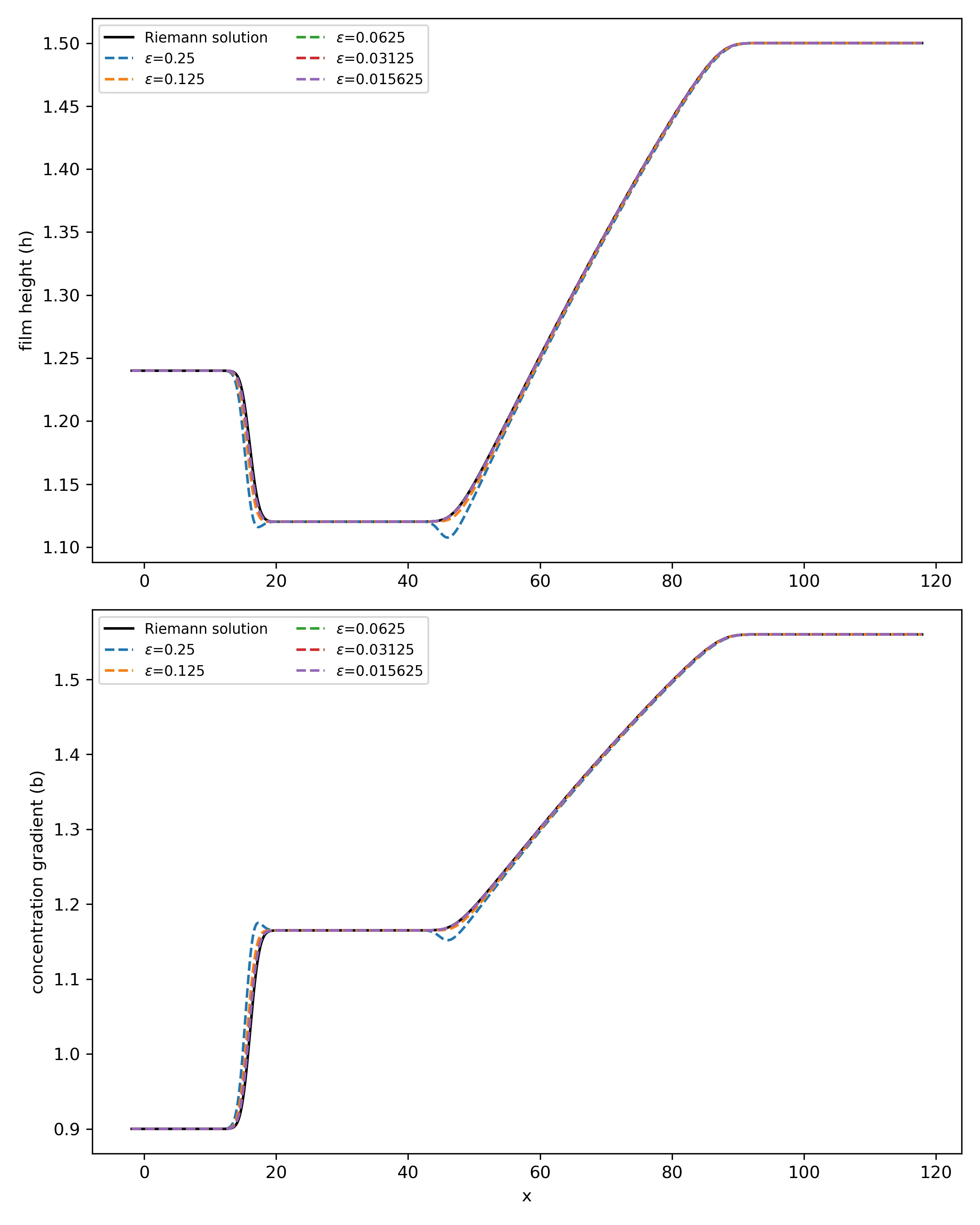}
        \caption{Long time behaviour of the solutions with initial data \eqref{J+R_per_data} at $t=15.0$.}
        \label{fig:J+R_t=15}
    \end{figure}
 \end{example}
 \begin{example}
In this example, we perturb the initial data from Example \ref{ex:J+S} and include a perturbation parameter $\varepsilon$. In particular, we consider the following initial data
     \begin{align}\label{ex:J+S_per}
    (h, b)^\top (x, 0)=\begin{cases}
         (1.5, 1.6)^\top,~~~~\qquad -\infty <x<-\varepsilon,\\
        (0.95, 1.62)^\top, ~~\qquad -\varepsilon<x<\varepsilon, \\
        (1.25, 1.15)^\top, ~~\qquad \varepsilon<x<+\infty.
    \end{cases}
\end{align}

\begin{figure}
        \centering
        \includegraphics[width=\linewidth]{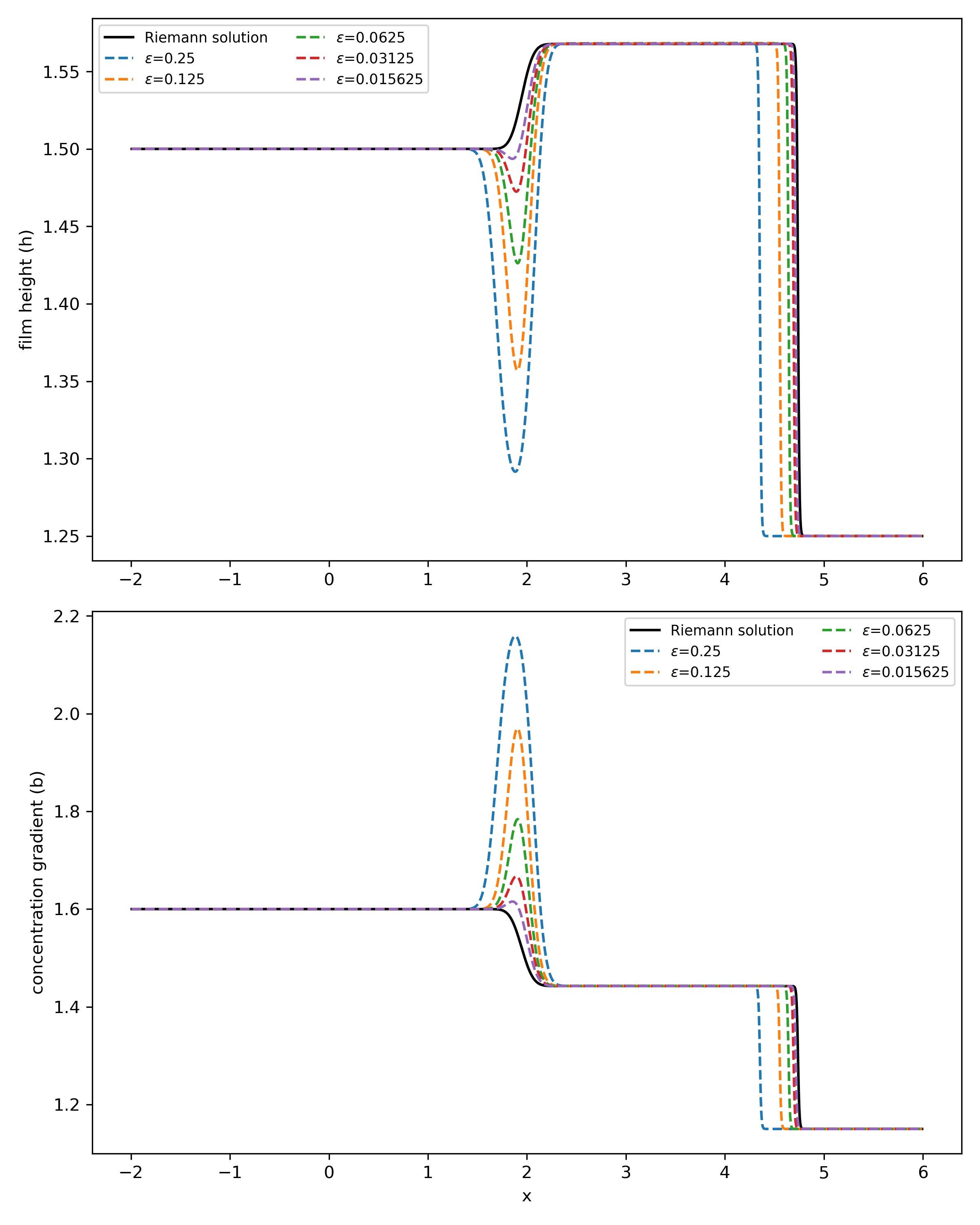}
        \caption{Solution of perturbed Riemann problem with initial data \eqref{ex:J+S_per} at $t=1.0$\\ (Interaction of $J_1+S_1$ and $J_2+S_2$).}
        \label{fig:J+S_per_t=1}
\end{figure}
\begin{figure}
    \centering
    \includegraphics[width=\linewidth]{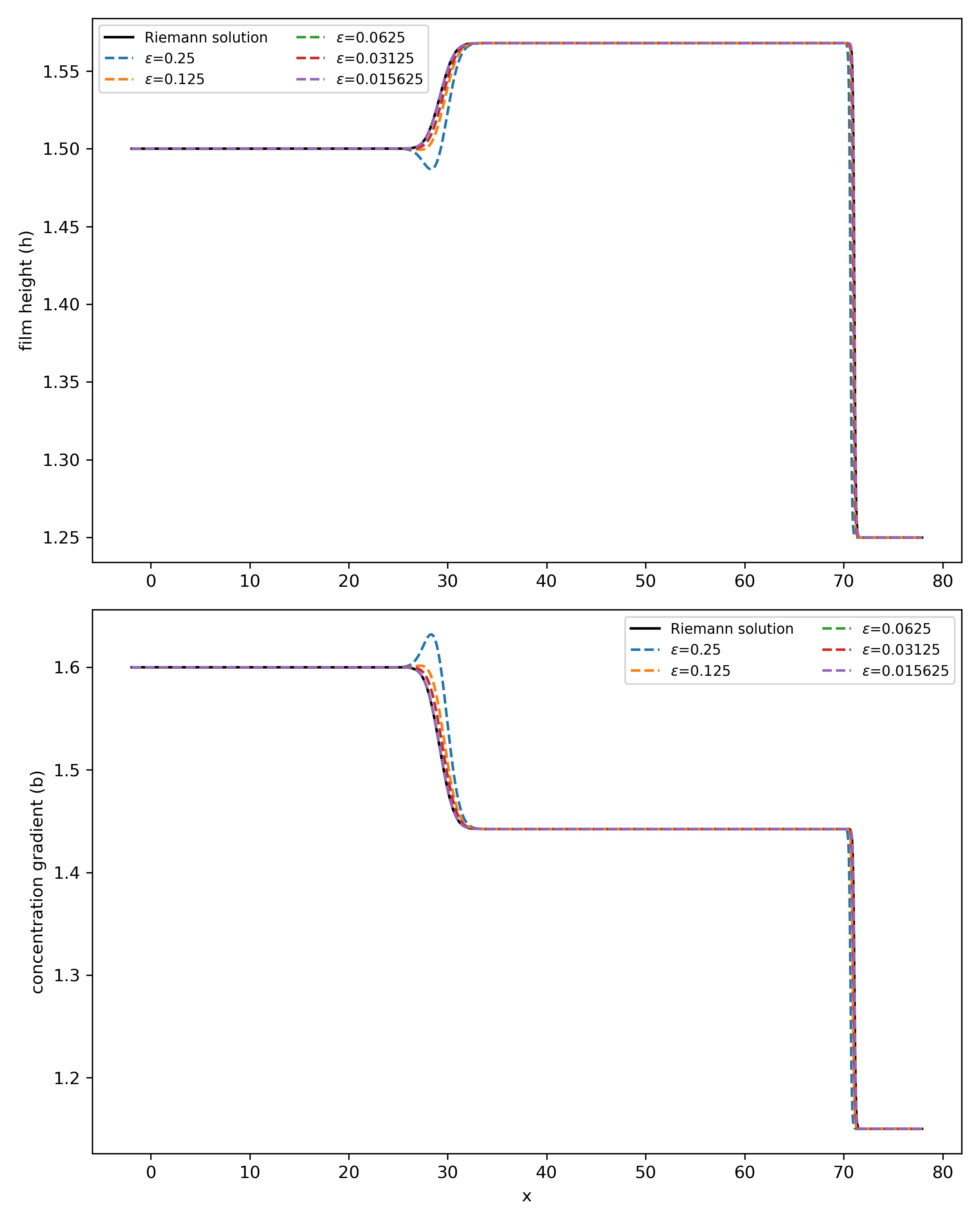}
    \caption{Long-time behaviour of the perturbed Riemann solutions with initial data \eqref{ex:J+S_per} at $t=15.0$.}
    \label{fig:J+S_per_t=15}
\end{figure}

We use the Godunov solver again, and plot the numerical solutions at time $t=1.0$ and $t=15.0$ in Figure \ref{fig:J+S_per_t=1} and Figure \ref{fig:J+S_per_t=15}, respectively. It can be observed that at time $t=1.0$, the solution of the perturbed Riemann problem is $J_1+S_1$ and $J_2+S_2$, which converges to the solution of the Riemann problem as $\varepsilon\rightarrow 0$. Also, the large time behaviour of the perturbed Riemann problem is governed by the solution of the underlying Riemann problem only.
 \end{example}
 \begin{example}
 In this case, we consider the initial data of the form  
 \begin{align}\label{ex:deltaS}
    (h, b)^\top (x, 0)=\begin{cases}
         (1.24, 0.90)^\top,~~~~\qquad -\infty <x<-\varepsilon,\\
        (0.00001, 5.5)^\top, ~~\qquad -\varepsilon<x<\varepsilon, \\
        (1.5, 1.56)^\top, ~~\quad\qquad \varepsilon<x<+\infty.
    \end{cases}
\end{align}
The solution structure for this case corresponds to the interaction of $\delta S_1$ and $J_2+R_2$. We plot the numerical solution at time $t=1.0$ and $t=35.0$ in Figure \ref{fig:deltaS_per_t=1} and Figure \ref{fig:deltaS_per_t=35}, respectively, using the Godunov solver, which again verifies the asymptotic behaviour of the perturbed Riemann problem. 
\begin{figure}
    \centering
    \includegraphics[width=\linewidth]{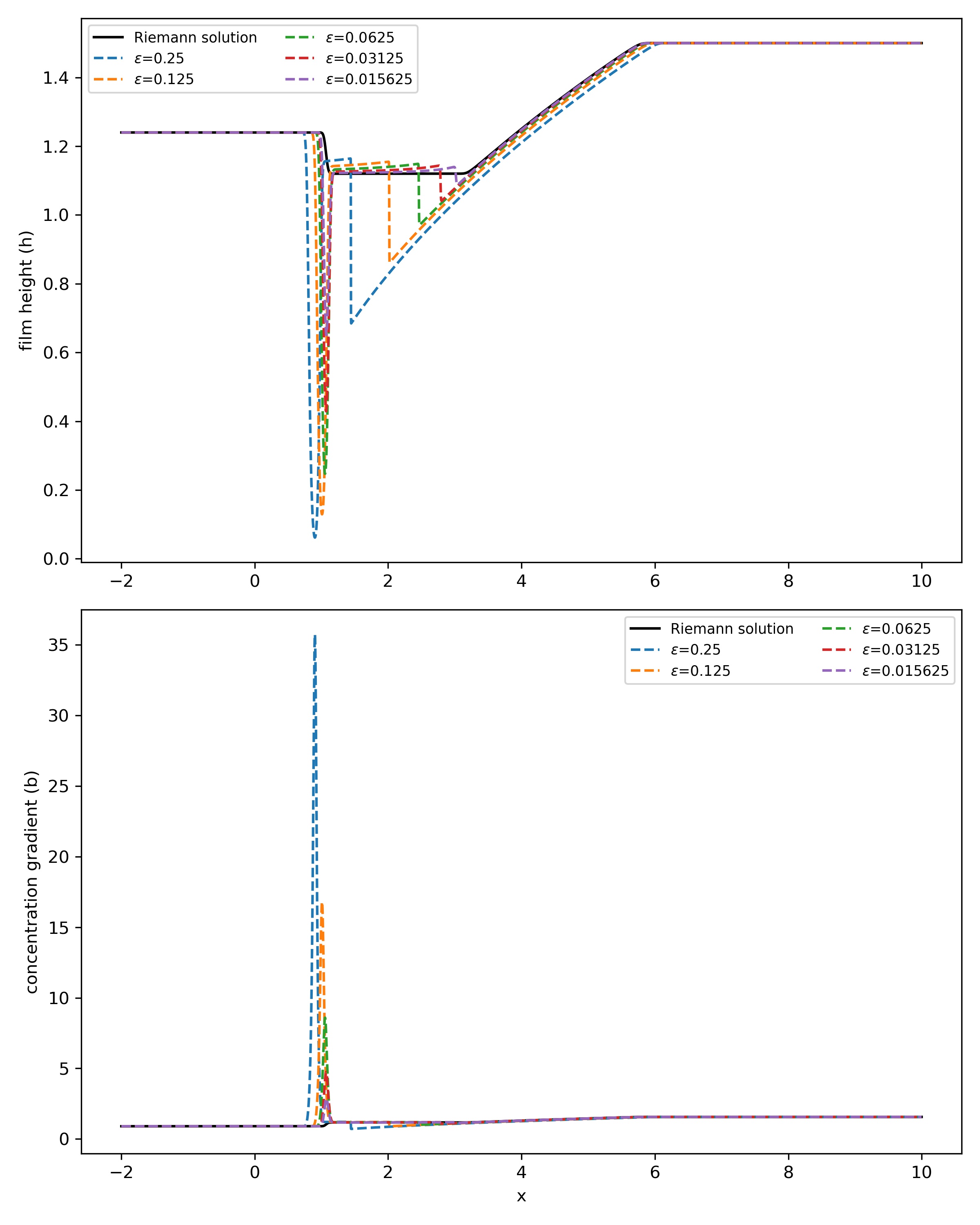}
    \caption{Solution of perturbed Riemann problem with initial data \eqref{ex:deltaS} at $t=1.0$\\
    (Interaction of $\delta S_1$ and $J_2+R_2$).}
    \label{fig:deltaS_per_t=1}
\end{figure}
\begin{figure}
    \centering
    \includegraphics[width=\linewidth]{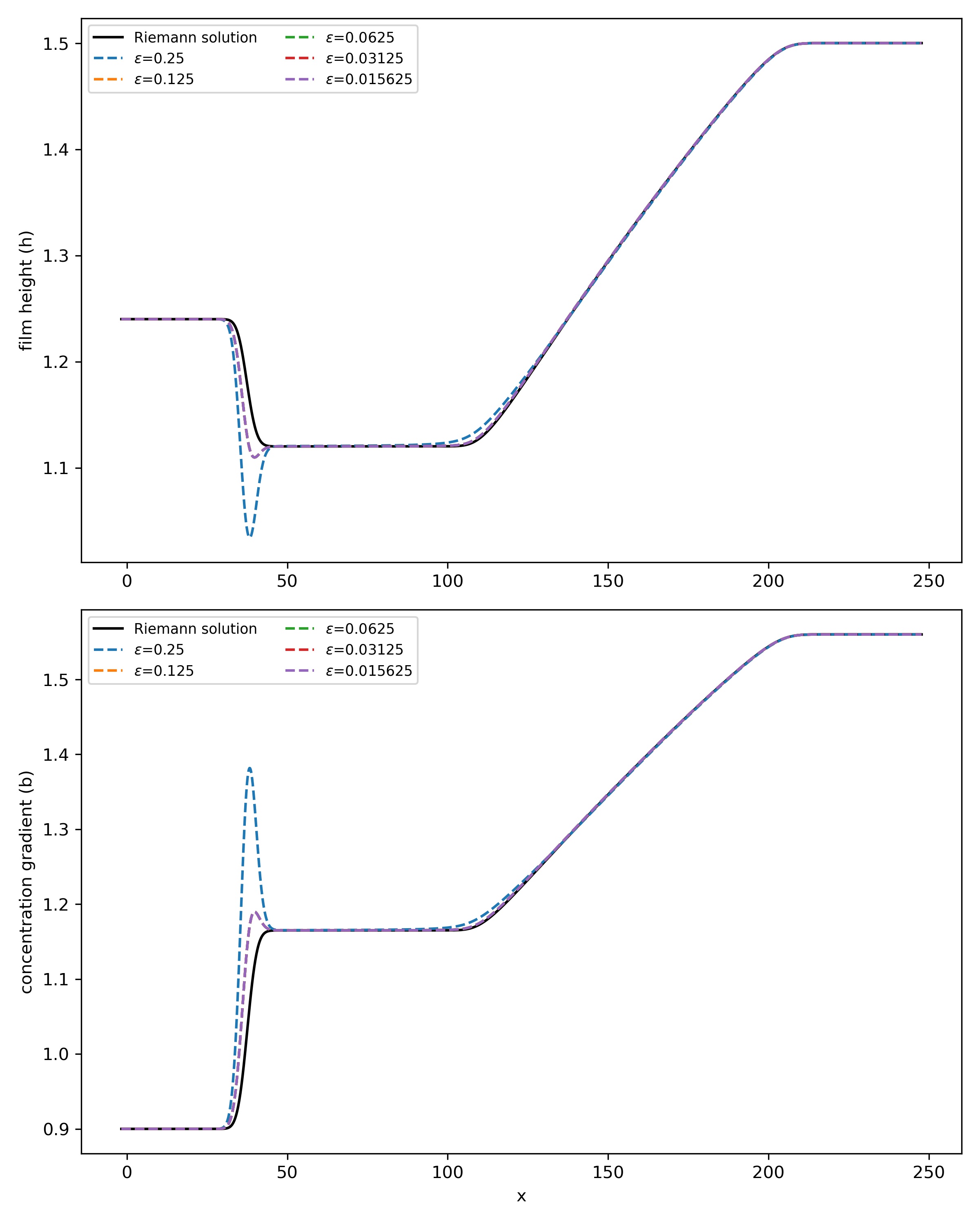}
    \caption{Long time behaviour of the solutions with initial data \eqref{ex:deltaS} at $t=35.0$.}
    \label{fig:deltaS_per_t=35}
\end{figure}
 \end{example}

\section{Conclusions and future outlook}\label{sec: 5}
In this article, we analyzed the Riemann problem for a newly developed $(2\times 2)$ hyperbolic system, which governs the dynamics of thin film flow under the influence of gravity and solute transport. We were able to provide a novel set of classes of entropy-entropy pairs with sufficient conditions for strictly convex entropies. Also, we prove the existence and uniqueness of the Riemann problem and analyzed the stability of the constructed solutions via flux perturbation and perturbation in the initial data. 

A natural next step would be to prove the existence of global $L^{\infty}$ solutions for the system using the compensated compactness approach, where the rich entropy structure of the system can play a big role. However, it will not be straightforward to obtain global bounds due to the nonconvexity of the first Riemann invariant and may need flux perturbation methods to obtain a bounded invariant region and thus induced global bounds. Also, it would be interesting to analyze whether the one-layer case could be extended to its multi-layer counterpart while maintaining the rich mathematical structure of the underlying dynamics.\\\\ 
\textbf{Acknowledgements}~  Financial support by the German Research Foundation (DFG), within the Priority Programme - SPP 2410 Hyperbolic Balance Laws in Fluid Mechanics: Complexity, Scales, Randomness (CoScaRa) is greatly acknowledged.\\\\
\textbf{CRediT authorship contribution statement}~\\
{\bf{Rahul Barthwal:}} Conceptualisation,  Investigation, Methodology, Formal analysis, Validation, Visualisation, Modelling,  Numerical simulations, Writing—original draft, review \& editing, Project administration, {\bf{Christian Rohde:}} Funding acquisition, Writing – review \&
editing, {\bf{Anupam Sen:}}  Investigation, Writing - review \& editing.

\bibliographystyle{abbrv}
\bibliography{citation}
\end{document}